\newtheorem{theorem}{Theorem}[section]
\newtheorem{proposition}[theorem]{Proposition}
\newtheorem{definition}[theorem]{Definition}
\newtheorem{remark}[theorem]{\it Remark\/}
\newtheorem{example}[theorem]{\it Example\/}
\newtheorem{lemma}[theorem]{Lemma}
\numberwithin{equation}{section}
\numberwithin{figure}{section}
\newcommand \ee			{\end{equation}}
\newcommand \be		{\begin{equation}}
\newcommand \mathscr {}
\newcommand \RR                 {\mathbb{R}}
\newcommand \RN                 {\mathbb{R}^N}
\newcommand \del                \partial
\newcommand \eps                \epsilon
\newcommand \sig                \sigma
\newcommand \calB                 {\mathcal{B}}
\newcommand \calO                 \Phi 
\newcommand \scrO                 \Psi 
\newcommand \calU                 \Omega 
\newcommand \calW                R 
\newcommand \scrW                R
\newcommand \sgn                {\text{sgn}}
\newcommand \hatl               {\widehat l}
\newcommand \hatr               {\widehat r}
\newcommand \lam                {\lambda}
\newcommand \hatlam             {\widehat\lambda}
\newcommand \bLam               {{\underline \Lambda}}
\newcommand \Lamb               {{\overline \Lambda}}
\newcommand \la                 \langle
\newcommand \ra                 \rangle
\newcommand \ut        {\tilde u}
\newcommand \vt        {\tilde v}
\newcommand{\cplL}{\theta_-}
\newcommand{\cplR}{\theta_+}
\newcommand{\cplLR}{\theta_{\pm}}
\newcommand{\lpcL}{\gamma_-}
\newcommand{\lpcR}{\gamma_+}
\newcommand{\lpcLR}{\gamma_{\pm}}
\begin{document} 
\bibliographystyle{plain}  
\title[Coupling techniques for nonlinear hyperbolic equations]{
Coupling techniques for nonlinear hyperbolic equations. I. Self-similar diffusion for thin interfaces
}
\author[Benjamin Boutin, Fr\'ed\'eric Coquel, and Philippe G. L{\tiny e}Floch]
{Benjamin Boutin$^{1,2}$, Fr\'ed\'eric Coquel$^2$, 
and 
Philippe G. L{\smaller e}Floch$^2$} 
\thanks{
$^1$ Institut de Recherche Math\'ematiques de Rennes (IRMAR), Universit\'e de Rennes 1, 
263 Av. General Leclerc, 35042 Rennes. Email: {\tt benjamin.boutin@univ-rennes1.fr}
\newline
$^2$ Laboratoire Jacques-Louis Lions \& Centre National de la Recherche Scientifique,  
 Universit\'e Pierre et Marie Curie (Paris 6), 4 Place Jussieu, 75252 Paris, France.  
\newline 
Blog:  {\tt philippelefloch.org.}
Email: \texttt{coquel@ann.jussieu.fr, contact@philippelefloch.org} 
\newline
\textit{AMS subject class.} 35L65, 76L05, 76N.
\textit{Key words and phrases.} Hyperbolic conservation law, coupling technique, 
Riemann problem, 
self-similar approximation, resonant effect. 
}

\date{Published in Proc. A Roy. Soc. Endburgh 141A (2011), 921--956.}
  
\begin{abstract} 
We investigate various analytical and numerical 
techniques for the coupling of nonlinear hyperbolic systems and, in particular, we introduce here 
an augmented formulation which allows for the modeling of the dynamics of 
interfaces between fluid flows. The main technical difficulty 
to be overcome lies in the possible resonance effect when wave speeds coincide and global hyperbolicity is lost. 
As a consequence, non-uniqueness of weak solutions is observed for the initial value problem which need to be supplemented with further admissibility conditions. This first paper is devoted to investigating these issues in the setting of 
self-similar vanishing viscosity approximations to the Riemann problem for general hyperbolic systems. 
Following earlier works by Joseph, LeFloch, and Tzavaras, we establish an existence theorem for the Riemann problem 
under
fairly general structural assumptions on the nonlinear hyperbolic system and its regularization. Our main contribution 
consists of nonlinear wave interaction estimates for solutions which apply to resonant wave patterns.
\end{abstract}
  
\maketitle


\section{Introduction}
\label{section1}

This is the first part of a series devoted to analytical and numerical techniques relevant 
for the coupling of nonlinear hyperbolic systems. We mainly discuss an augmented formulation 
which allows for the modeling of the dynamics of interfaces between fluid flows. The main technical difficulty 
overcome here for the Riemann problem (that is, a Cauchy problem with piecewise constant initial data)  
lies in the possible resonance effect when wave speeds coincide and global hyperbolicity is lost. 
As a consequence, non-uniqueness of weak solutions is observed for the initial value problem which need 
to be supplemented with further admissibility conditions. In the present paper, we restrict attention to self-similar vanishing viscosity approximations to the Riemann problem for general hyperbolic systems.

Specifically, we are interested in the following class of nonlinear hyperbolic systems of $(N+1)$ partial differential equations
\be
\label{eq:UV}
\aligned
A_0(u,v) \, \del_tu + A_1(u,v) \, \del_x u &= 0,
\\
\del_t v &= 0,
\endaligned
\ee
where the vector-valued field $u=u(t,x)\in \RR^N$ and the scalar function $v=v(t,x)\in\RR$ (with $x \in \RR$ and $t \geq 0$)
are the main unknowns of the theory. We assume that the first set of equations in \eqref{eq:UV} forms a strictly hyperbolic
system but admits one wave speed that changes sign, so that the overall system \eqref{eq:UV} is only weakly hyperbolic. 
Specifically, the mappings $A_0,A_1$ are assumed to be smooth, matrix-valued maps
such that $A_0$ is invertible  so that 
the first set of equations in \eqref{eq:UV} is formally equivalent to the following 
nonconservative system with variable coefficients: 
\be
\label{nonc}
\del_t u + A_0(u,v)^{-1} \, A_1(u,v) \, \del_x u = 0. 
\ee
It is assumed that 
the product matrix $A_0(u,v)^{-1} \, A_1(u,v)$ admits real and distinct eigenvalues denoted by $\lambda_i(u,v)$, 
$1 \leq i \leq N$.  
Finally, it is assumed that one eigenvalue $\lambda_m$ may take values about the origin. 
For instance, there might exist a state  $(u^\star,v^\star)\in\RR^N\times\RR$
such that the matrix $A_1(u^\star,v^\star)$ is non-invertible, with 
\be
\label{Resonance}
\lambda_m(u^\star,v^\star) = 0.
\ee
In view of the above assumption, the system \eqref{eq:UV} is called a {\sl weakly hyperbolic system}. 
Our objective, precisely, is to study this {\sl resonant regime.}

Recall that Dafermos~\cite{Dafermos73,Dafermos73b,DafermosDiPerna76} advocated the use
of self-similar regularizations in order to capture the whole wave fan structure of 
weak solutions to the Riemann problem. This consists in 
searching for self-similar solutions depending only on the variable $\xi := x/t$ and, then, 
introducing a self-similar regularization of the given hyperbolic system. Specifically, 
for the problem of coupling under consideration in this paper
we propose, in the
variable $(x,t)$, to regularize \eqref{eq:UV} in the form 
$$ 
\begin{aligned}
   A_0(u^\eps,v^\eps)\del_t u^\eps + A_1(u^\eps,v^\eps)\del_x u^\eps 
   & = \eps t\, \del_x\bigl(B_0(u^\eps,v^\eps)\del_x u^\eps\bigr),
   \\
   \del_t v^\eps &= \eps^pt\,\del_{xx}v^\eps, 
\end{aligned}
$$
where $\eps>0$ is a small parameter and $B_0=B_0(u,v)$ is a given matrix referred to as the viscosity matrix
and $p>0$ is a real parameter.  
In the self-similar variable $\xi$, the equations satisfied by the viscous solutions 
$(u^\eps, v^\eps) = (u^\eps(\xi),v^\eps(\xi))$ 
read (with $\xi\in\RR$)
\be
\label{1.3} 
\begin{aligned}
   \big(-\xi A_0(u^\eps,v^\eps)+ A_1(u^\eps,v^\eps)\big)u^\eps_\xi &= \eps
   \bigr(B_0(u^\eps,v^\eps)u^\eps_\xi\bigr)_\xi,
   \\
   -\xi v^\eps_\xi &= \eps^p v^\eps_{\xi\xi}. 
\end{aligned}
\ee 

Our objective is to study the existence and regularity of solutions to \eqref{1.3} and to rigorously 
justify 
the passage to the limit $\eps \to 0$. Following earlier works by Joseph, LeFloch, and Tzavaras (see references below), 
we are going to establish a uniform ($\eps$-independent) bound on the total variation $TV(u^\eps)$, and 
an existence theorem under fairly general structural assumptions on the hyperbolic system 
and its regularization. 

This general strategy was proposed in the case $B_0 = I$ and $A_0 = I$ by Tzavaras~\cite{Tzavaras96} 
(for conservative systems) and extended by LeFloch and Tzavaras~\cite{LeFlochTzavaras96}
(for non-conservative systems). The technique was further developed 
by Joseph and LeFloch in the series of papers \cite{JosephLeFloch99,JosephLeFloch02b,JosephLeFloch02,JosephLeFloch5,JosephLeFloch6,JosephLeFloch07}. 
For the purpose of the present paper, we will 
especially build on \cite{JosephLeFloch07} where a general technique to derive interaction estimates was introduced and general matrices $B_0$ 
were dealt with. 
For other results on self-similar limits including viscosity-capillarity terms and 
large data, we refer to pioneering works by Slemrod~\cite{FanSlemrod93}, Slemrod and Tzavaras~\cite{SlemrodTzavaras89},
Fan and Slemrod~\cite{FanSlemrod93}.

The coupling that we are studying in the present work may be non-conservative in nature (cf.~Section~\ref{section2} for details). In contrast, for coupling techniques based on systems in conservation form, a large 
literature is available; see for 
instance~\cite{AdimurthiMishraGowda05, AudussePerthame05, BurgerKarlsen08, BurgerKarlsenRisebroTowers04, Diehl96, SeguinVovelle03}.

An outline of this paper follows. In Section~\ref{section2}, we present a general 
approach involving the coupling 
of nonlinear hyperbolic systems. In Section~\ref{section3} we discuss the case of scalar-valued unknowns $u$, and 
establish a general existence theorem for the viscous self-similar Riemann problem \eqref{1.3} when $N=1$. 
As $\eps \to 0$, we prove that this smooth solution converges to an entropy solution, at least in each half-space $x<0$ and $x>0$. 
This global existence result requires no smallness assumption on the data nor on the coupling of the two models.

The core part of this paper is contained in Sections~\ref{section4} and~\ref{construction} which 
cover general systems of $N$ equations. 
Imposing a natural smallness condition on the Riemann data and the coupling of the two models, 
we establish the existence of smooth solutions to the viscous problem, even in the presence of a resonance effect. 
In Section~\ref{section4} we derive the main estimates on the total variation while, in 
Section~\ref{construction}, we justify the limit $\eps \to 0$. We refer to the forthcoming works \cite{BCL2,BCL3,BCL4} 
for further investigation of these solutions and related issues.  


\section{A formulation of the coupling of nonlinear hyperbolic systems}
\label{section2}
 
Before we can state our new formulation based on an augmented system, 
we start by briefly explaining the formulation based on a fixed interface. 
The weakly hyperbolic problem mentioned in the introduction arises, in particular, via the following coupling technique. 
Consider two strictly hyperbolic systems posed in half-spaces: 
\be
\label{HalfProblem}
\aligned
 \del_t w + \del_x f_-(w) = 0,\qquad x<0, \quad t>0,
 \\
 \del_t w + \del_x f_+(w) = 0,\qquad x>0, \quad t>0,
\endaligned
\ee
where the flux $f_\pm$ are given smooth maps defined on open subsets $\Omega_\pm \subset \RR^N$
 and the unknown of the problem is $w=w(t,x) \in \Omega_- \cup \Omega_+$. 
In addition to initial data, a certain
 coupling condition must be prescribed at the (fixed) interface $\{x\!=\!0\}$. 
This problem can be regarded as a boundary and initial value problem within each
half-problem, and the fundamental question is how to formulate a physically relevant boundary
condition so that the global problem is well-posed. One natural requirement, 
following Godlewski and Raviart~\cite{GodlewskiRaviart04,GodlewskiLeThanhRaviart05}, 
is imposing the continuity condition 
\be
\label{Coupling}
\cplL(w(0-,t))=\cplR(w(0+,t)),\quad t>0,
\ee
where $\cplL,\cplR$ are two invertible maps in $\RN$,
with inverses 
$$
\lpcL :=\cplL^{-1}, \qquad \lpcR:=\cplR^{-1}.
$$
These functions precisely provide the necessary freedom to handle various types of 
couplings. For example, by choosing $\cplL=\cplR=\mathrm{Id}$ one imposes the continuity of the variable 
$w$ at the interface, while by choosing
$\cplLR=f_{\pm}$ one imposes the continuity of the flux at the interface 
(so that the general problem is conservative).

Recalling Dubois and LeFloch's theory \cite{DuboisLeFloch88} of the initial and boundary value problem
for nonlinear hyperbolic systems, it is clear that the condition \eqref{Coupling} is 
realistic only when the boundary is {\sl not characteristic,}
 that is when all eigenvalues are bounded away from $0$. 
In the latter case, instead, following the weak formulation of the boundary conditions 
proposed in 
\cite{DuboisLeFloch88} generalized, for the coupling problem, by 
Godlewski and Raviart~\cite{GodlewskiRaviart04,GodlewskiLeThanhRaviart05}
and Ambroso et al.~\cite{GDT06}, we impose 
that the interface condition is satisfied {\sl in a weak sense, only}
and, specifically, takes the form 
\be
\label{CouplingWeak}
\aligned
w(0+,t)\in\calO_+\bigl(\cplR\!\circ\!\cplL^{-1}(w(0-,t))\bigr),\\
w(0-,t)\in\calO_-\bigl(\cplL\!\circ\!\cplR^{-1}(w(0+,t))\bigr).
\endaligned
\ee
where $\calO_+(b_+)$ (resp. $\calO_-(b_-)$) is the Dubois-LeFloch's set of
admissible traces of the associated Riemann solutions 
$$ 
\aligned
\calO_+(b_+) :=\bigl\{\calW_+(0+;b_+,a), \quad a\in\Omega_+ \bigr\},\\
\calO_-(b_-) :=\bigl\{\calW_-(0-;a,b_-), \quad a\in\Omega_- \bigr\}.
\endaligned
$$
Here, $R=\calW_+(x/t;b_+,a)$ denotes the solution of the Riemann problem
$$
\del_t R + \del_x f_+(R)=0,\qquad x\in\RR, \quad t>0,
\qquad
R(x,0)=\begin{cases}
b_+, &x<0,\\
a, &x>0,
\end{cases}
$$
and similarly
$R=\calW_-(x/t; a,b_-)$ is the solution of 
$$
\del_t R + \del_x f_-(R)=0,\qquad x\in\RR,\quad t>0,
\qquad
R(x,0)=\begin{cases}
a, &x<0,\\
b_-, &x>0.
\end{cases}
$$

Yet, when $f_- \neq f_+$, 
the question of the existence and uniqueness of weak solutions satisfying \eqref{CouplingWeak} is a challenging issue. 
In the present work, we propose to {\sl reformulate the above problem} by ``removing'' the
interface and defining a new problem posed on the whole space $\RR$.

We proceed as follows. First of all, we define the {\sl new variables} 
\be
\label{ChangeVariable}
u_- := \cplL(w),\qquad u_+: =\cplR(w), \qquad u:= \begin{cases}
u_-, \qquad & x <0, 
\\
u_+, \qquad & x >0,
\end{cases}
\ee
and we rewrite the half-space problems in the (conservative) form
\be
\label{eq:U}
\del_t 
\lpcLR(u)+\del_x f_{\pm}(\lpcLR(u))=0,\quad
\pm x>0,\quad t>0,
\ee
or equivalently in the (nonconservative) form
\be
\label{eq:U-nonc}
(D_u \gamma_\pm(u)) \, \del_t u + (D_\gamma f_{\pm})(\lpcLR(u)) (D_u \gamma_\pm(u)) \del_x  u = 0,\quad
\pm x>0,\quad t>0.
\ee
The coupling condition becomes 
\be
\label{UCouplingWeak}
\aligned
u(0+,t)\in\scrO_+(u(0-,t)),\\
u(0-,t)\in\scrO_-(u(0+,t)),\\
\endaligned
\ee
where $\scrO_+(b)$ (and similarly $\scrO_-(b)$) is the following set of admissible trace at $\xi=0+$
$$ 
\scrO_+(b)=\bigl\{\scrW_+(0+,b,\mathrm{u}_+),\mathrm{u}_+\in\calU\bigr\},
$$
and $u=\scrW_+(\cdot,b,u_+)$ is the self-similar solution of the following Cauchy problem
$$
\del_t \lpcR(u)+\del_x f_+(\lpcR(u))=0,\quad x\in\RR,t>0,
\qquad
u(x,0)=\begin{cases}
b, &x<0,\\
\mathrm{u}_+, &x>0.
\end{cases}
$$
In absence of a resonance phenomenon, this reformulation allows us to simply impose the 
{\sl continuity of $u$ at interface} 
\be
\label{StateCoupling}
u(0-,t)=u(0+,t).
\ee

Second, we propose to replace the problem \eqref{eq:U}-\eqref{UCouplingWeak} by the new problem
(already mentioned in the introduction) 
\be
\label{eq:UV2}
\begin{aligned}
A_0(u,v)\del_t u + A_1(u,v)\del_x u&= 0,\\
\del_t v &=0,
\end{aligned} 
\ee
which is a nonlinear hyperbolic system in nonconservative form \cite{LeFloch1,LeFloch2}
and where 
$v: [0, +\infty) \times \RR \to [-1,1]$ will be referred to as the {\sl color function.} 
We arrange that regions where $v=-1$ correspond to the left-hand half-problem while
regions where $v=1$ correspond to the right-hand half-problem, by
requiring the following consistency property on $A_0,A_1$: 
\be
\label{Consistancy}
\begin{aligned}
A_0(u,\pm 1)&= D_u\lpcLR(u),\\
A_1(u,\pm 1)&= D_\gamma f_{\pm}\bigl(\lpcLR(u)\bigr) \, D_u\lpcLR(u).
\end{aligned}
\ee
and assuming the existence of a function $C=C(u,v)$ so that 
$$
A_0(u,v)= D_u C(u,v), 
$$ 
and $C(u,\pm 1)= \gamma_\pm(u)$. 
For $j=0,1$, by definition, the matrices $A_j(u,v)$ 
should smoothly connect $A_j(u,-1)$ to $A_j(u,1)$ as $v$ describes the interval $[-1,1]$.
Moreover $A_0$ must be invertible and
$A_0^{-1} \, A_1$ have real and distinct eigenvalues,   
extending here the strict hyperbolicity of the original hyperbolic
half-problems. 

The system \eqref{eq:UV2} is then supplemented with the initial data
\be
\label{Initial}
\begin{aligned}
u_0(x,0)=u_0(x)=:\cplLR(w_0(x))&, \quad \pm x>0\\
v_0(x,0)=v_0(x):=\pm 1&, \quad \pm x>0, 
\end{aligned}
\ee
for some given data $u_0$. 

We are especially interested in the case that the interface is characteristic for some state value 
$(u^\star,v^\star)$, that is, when the matrix $A_1(u^\star,v^\star)$
admits the eigenvalue $0$ and  
\eqref{StateCoupling} need not be satisfied as an equality, in general, so that the weak formulation above is necessary.

 
\section{Existence theory for scalar conservation laws}
\label{section3}

\subsection{Riemann problem with diffusion}
 
In the present section, we restrict attention to scalar equations and 
provide a rather complete study of the problem described in the introduction.  
Note that the problem under consideration is nonconservative in nature, 
and reduces to a conservative system when the component $v$ takes the values $\pm 1$. 
As explained earlier, we search for a function $u= u(\xi)$ obtained as
the limit of smooth approximations $u^\eps, v^\eps$ to   
\be
 \label{eq5}
\aligned
\big(-\xi A_0(u^\eps,v^\eps) + A_1(u^\eps,v^\eps)\big) u^\eps_\xi 
& =  \eps \, \left(B_0(u^\eps,v^\eps)u^\eps_\xi\right)_\xi,
\\
-\xi v^\eps_\xi 
& = \eps^p\ v^\eps_{\xi\xi},
\endaligned
\ee
supplemented with Riemann initial data
\be
\label{BC}
\aligned 
&   u^\eps(-\infty)=u_L,
&&  \qquad u^\eps(+\infty)=u_R,
\\
&   v^\eps(-\infty)=-1,
&& \qquad  v^\eps(+\infty)=1.
 \endaligned
\ee
In \eqref{eq5}, the maps $A_0$ and $A_1$ are now smooth {\sl scalar-valued} functions, which satisfy the following 
consistency condition with the underlying hyperbolic coupling problem determined by the functions $\gamma_\pm$ and $f_\pm$:  
there exist constants $c_1,c_2,c_3$, such that 
\be 
\label{Assum1a}
   0 < c_1 \leq A_0(u,v), \qquad   0 < c_2 \leq B_0(u,v) \leq c_3,
\ee
and 
\be
\label{Assum1b}
\aligned
&   A_0(u,-1)=\lpcL'(u), 
&&   A_0(u,1)=\lpcR'(u),
\\
&   A_1(u,-1)=\left(f_-\circ\lpcL\right)'(u),
&&   A_1(u,1)=\left(f_+\circ\lpcR\right)'(u). 
\endaligned
\ee
We set $\calU := [\min(u_L,u_R),\max(u_L,u_R)]$, and introduce the Lipschitz constants
$\omega_0$, $\omega_1$ of $A_0, A_1$, respectively, i.e. 
$$
\left|A_j(\ut,\vt)-A_j(u,v)\right| \leq \omega_j \, (|\ut-u| + |\vt- v|) 
$$
for all $(\ut,\vt),(u,v)\in\calU\times[-1,1]$ and $j=0,1$.

The first equation in \eqref{eq5} can be equivalently rewritten as 
\be
\label{eq4}
   \big(-\xi + \lambda(u^\eps,v^\eps)\big)
   G(u^\eps,v^\eps) \, B_0(u^\eps,v^\eps)u^\eps_\xi
   = 
   \eps^{\phantom{p}} \big(B_0(u^\eps,v^\eps)u^\eps_\xi\big)_\xi,
\ee
where $\lambda$ and $G$ are defined by 
$$
 \lambda(u,v) := \dfrac{A_1(u,v)}{A_0(u,v)},
 \quad \qquad 
 G(u,v) := \dfrac{A_0(u,v)}{B_0(u,v)}.
$$
Furthermore, our assumptions  
imply that (for some $\Lambda>0$)  
$$
|\lambda(u,v)| \leq \Lambda,
\qquad (u,v)\in\calU\times[-1,1], 
$$
which expresses the property of finite speed of propagation for the underlying hyperbolic problem.

Given any $M > \Lambda$, we will study first the problem on the bounded interval $[-M,M]$ with the boundary conditions in \eqref{BC}
imposed at the end points $\pm M$. Later, we will let $M$ tend to infinity.

\begin{proposition}[Existence for Riemann problem with diffusion]
\label{UnifTVB}
For each $\eps>0$ the problem \eqref{eq5} admits a smooth solution 
$(u^\eps, v^\eps) \in  C^0\big([-M,M], \calU \times [-1,1] \big)$ (space of continuous functions) 
given by the implicit formula: 
\be
\label{eq:Repres}
\aligned
& u^\eps(\xi) = u_L + (u_R-u_L)
   \frac{\begin{displaystyle}
       \int_{-M}^\xi
       e^{-h^\eps(u^\eps;\zeta)/\eps}\ B_0(u^\eps,v^\eps)^{-1}\,d\zeta
     \end{displaystyle}
   }{
     \begin{displaystyle}
       \int_{-M}^M
       e^{-h^\eps(u^\eps;\zeta)/\eps}\ B_0(u^\eps,v^\eps)^{-1}\,d\zeta
     \end{displaystyle}
   },
   \\ 
   & v^\eps(\xi)=-1+2\dfrac{\displaystyle\int_{-\infty}^\xi e^{-\zeta^2/{2\eps^p}}\,d\zeta
     }{\displaystyle\int_{-\infty}^{+\infty} e^{-\zeta^2/{2\eps^p}}\,d\zeta },
\endaligned
\ee
with 
$$
   h^\eps(u^\eps;\xi) := \int_\alpha^\xi
   \big(\zeta - \lambda\left(u^\eps,v^\eps\right)\big)\
      G(u^\eps,v^\eps)
   \, d\zeta.
$$
Moreover, these solutions $u^\eps$ and $v^\eps$ are monotone, bounded, and
 continuous, and have uniformly bounded total variation: 
$$
TV(u^\eps)\leq |u_R-u_L|,\qquad  TV(v^\eps)\leq 2.
$$
\end{proposition}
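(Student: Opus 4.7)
The $v^\eps$-equation in \eqref{eq5} decouples from $u^\eps$ and is linear: writing $v^\eps_{\xi\xi}/v^\eps_\xi = -\xi/\eps^p$ and integrating gives $v^\eps_\xi(\xi) = C\, e^{-\xi^2/(2\eps^p)}$, and the boundary conditions in \eqref{BC} determine $C$, producing at once the second line of \eqref{eq:Repres}; in particular $v^\eps$ is smooth, strictly monotone from $-1$ to $+1$, hence bounded with $TV(v^\eps)\leq 2$. Treating $v^\eps$ henceforth as known, the key observation for the first equation is that \eqref{eq4} can be recast as a first-order linear ODE for the auxiliary unknown $w^\eps := B_0(u^\eps,v^\eps)\, u^\eps_\xi$, namely
$$
   w^\eps_\xi = \frac{1}{\eps}\,\bigl(-\xi + \lambda(u^\eps,v^\eps)\bigr)\, G(u^\eps,v^\eps)\, w^\eps,
$$
whose solution has the explicit form $w^\eps(\xi) = w^\eps(\alpha)\, e^{-h^\eps(u^\eps;\xi)/\eps}$. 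Dividing by $B_0$, integrating from $-M$ to $\xi$, and fixing the remaining constant from the boundary condition $u^\eps(M)=u_R$ reproduces the first line of \eqref{eq:Repres} identically.

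Existence is then obtained by a Schauder fixed-point argument on the convex, closed, bounded set $K := C^0([-M,M],\calU)$. Define the map $T:K\to K$ by sending $u$ to the function on the right-hand side of \eqref{eq:Repres}, with $u$ in place of $u^\eps$ and $v^\eps$ as determined above. Thanks to \eqref{Assum1a} and the boundedness of $\lambda$ and $G$ on the compact set $\calU\times[-1,1]$, the weight $B_0(u,v^\eps)^{-1} e^{-h^\eps(u;\cdot)/\eps}$ is positive and bounded above and below by constants independent of $u\in K$ (though depending on $\eps$ and $M$); consequently $Tu\in C^1$, $Tu$ takes values in $\calU$ as a convex combination of $u_L$ and $u_R$, and the Lipschitz constant of $Tu$ is bounded independently of $u$. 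By the Arzel\`a--Ascoli theorem $T(K)$ is precompact in $C^0$, so $K_1 := \overline{\mathrm{conv}}\, T(K)$ is a compact convex subset of $K$ with $T(K_1)\subset K_1$. Continuity of $T$ in the $C^0$-topology follows from the uniform continuity of $\lambda,G,B_0$ on $\calU\times[-1,1]$: if $u_n\to u$ uniformly then $h^\eps(u_n;\cdot)\to h^\eps(u;\cdot)$ uniformly, and dominated convergence in both numerator and denominator of \eqref{eq:Repres} gives $Tu_n\to Tu$ uniformly. Schauder's theorem now furnishes a fixed point $u^\eps\in K_1$, and smoothness $u^\eps\in C^\infty$ follows by a standard bootstrap on the representation formula.

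The qualitative assertions are read off directly from \eqref{eq:Repres}: the derivative $u^\eps_\xi$ equals $(u_R-u_L)$ times the strictly positive function $B_0(u^\eps,v^\eps)^{-1} e^{-h^\eps(u^\eps;\cdot)/\eps}$ divided by its integral over $[-M,M]$, so $u^\eps$ is monotone, valued in $\calU$, and satisfies $TV(u^\eps)=|u_R-u_L|$. The main technical subtlety is the nonlinear coupling in which $u^\eps$ enters simultaneously through the weight $B_0^{-1}e^{-h^\eps(u^\eps;\cdot)/\eps}$ and as the argument of $T$; this is precisely what the fixed-point formulation resolves, and the restriction to the bounded interval $[-M,M]$ is what provides the uniform bounds on $h^\eps$ underlying equicontinuity of $T(K)$. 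The $\eps$-independence of the total variation bound is already manifest in the formula, so no further estimates are needed at this stage.
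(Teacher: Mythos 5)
Your proposal is correct and follows essentially the same route as the paper: solve the $v^\eps$-equation explicitly, freeze $u$ in the coefficients to get the explicit weighted-integral representation, and apply Schauder's fixed-point theorem on $C^0([-M,M],\calU)$ using the $\eps$-dependent (but $u$-uniform) bounds on $e^{-h^\eps/\eps}B_0^{-1}$ to obtain compactness, with monotonicity and the $TV$ bounds read off from the formula. If anything, your argument is slightly more complete, since you also verify the $C^0$-continuity of the fixed-point map, a step the paper's proof leaves implicit.
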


\begin{proof} Solving the second equation in \eqref{eq5} is immediate. On the other hand, 
we can rewrite the problem \eqref{eq5} as
 $$
   \begin{aligned}
     B_0(u^\eps,v^\eps) u^\eps_\xi& =\varphi,\\
     \big(-\xi + \lambda(u^\eps,v^\eps)\big)\ G(u^\eps,v^\eps)\
     \varphi&=\eps \varphi_\xi.
   \end{aligned}
 $$
Given $\ut\in  C^0\big([-M,M], \calU \big)$ we consider the solution $u^\eps(\ut;\xi)$ of the ``linearized'' problem
 $$
   \begin{aligned}
     B_0(\ut,v^\eps) u^\eps_\xi& =\varphi,\\
     \big(-\xi + \lambda(\ut,v^\eps)\big)\ G(\ut,v^\eps)\
     \varphi&=\eps \varphi_\xi,
   \end{aligned}
 $$
 together with the boundary conditions
 $$
     u^\eps(-M)=u_L,\qquad 
     u^\eps(M)=u_R.
 $$
 The solution is explicitely given by 
 \be
   \label{eq18}
   \begin{aligned}
     &u^\eps(\ut;\xi)=u_L+(u_R-u_L) \, \dfrac
     {\displaystyle\int_{-M}^\xi \varphi(\ut;\zeta)B_0(\ut,v^\eps)^{-1}\,d\zeta}
     {\displaystyle\int_{-M}^M \varphi(\ut;\zeta)B_0(\ut,v^\eps)^{-1}\,d\zeta},\\
     &\varphi(\ut;\xi) = \exp\big(-h^\eps(\ut;\xi)/\eps\big),\\
     &h^\eps(\ut;\xi) = \int_\alpha^\xi 
     \big(\zeta -
     \lambda\left(\ut,v^\eps\right)\big)\,
     G (\ut,v^\eps)
     \, d\zeta,
   \end{aligned}
 \ee
in which $\alpha\in[-M,M]$ is arbitrary.  The above formulas determine a map $T^\eps$ that takes $\ut\in  C^0\big([-M,M], \calU\big)$ 
to the function $u^\eps(\ut;\cdot)\in C^0\big([-M,M], \calU\big)$. We need to find a fixed point of $T^\eps$. 

 The uniform bounds on $\lambda(\ut,v)$ and $G(\ut,v)$ (for any $\ut\in C^0\big([-M,M], \calU\big)$ and $v\in[-1,1]$)
 allow us to choose $\alpha_\eps\in[-M,M]$ so that
 $$
 h^\eps(\ut;\xi)\geq 0, \quad \xi\in[-M,M];  
 \qquad 
 h^\eps(\ut;\alpha_\eps)=0. 
 $$
Consequently, for all $\xi\in[-M,M]$ and for some constant $c_4$ we have 
 $$
   0 \leq  h^\eps(\ut;\xi)  \leq c_4,
 $$
 so that
 $$
   c_3^{-1}\exp\left(-c_4/\eps\right)\leq \varphi(\ut,\xi)B_0(\ut,v^\eps)^{-1} \, d\xi \leq c_2^{-1}. 
 $$
We also obtain the uniform bound
 $$
   \begin{aligned}
     \left|\dfrac{d}{d\xi} u_\eps(\ut;\xi)\right|&
     \leq|u_R-u_L| \, \dfrac
     {\displaystyle \varphi(\ut;\xi)B_0(\ut,v^\eps)^{-1}}
     {\displaystyle \int_{-M}^M\varphi(\ut;\zeta)B_0(\ut,v^\eps)^{-1} \,d\zeta}\\
     &\leq |u_R-u_L| \, 
     \dfrac{c_2^{-1}}{2Mc_3^{-1}\exp\left(-c_4/\eps\right)}
     \\
     & \leq
     |u_R-u_L| \, \dfrac{c_3\ \exp(c_4/\eps)}{2Mc_2}.
   \end{aligned}
 $$
 The bound above being independent of $\ut$, we deduce that the family $T_\eps$ is equicontinuous
and its image is relatively 
 compact in $C^0\big([-M,M], \calU\big)$. Since this image is a convex closed subset of the Banach
 space $C^0([-M,M])$, Schauder's fixed point theorem applies and
 ensures that $T^\eps$ admits a fixed point. Hence, there exists
 $u^\eps\in C^0\big([-M,M], \calU\big)$ such that  $T^\eps(u^\eps)=u^\eps$, and 
 the representation formula \eqref{eq:Repres} holds. The uniform total variation bounds follow directly from \eqref{eq:Repres}.
\end{proof}


\subsection{Passage to the limit}

Using the notation in Proposition~\ref{UnifTVB}, we continue with the following two lemmas.

\begin{lemma}[Existence of a pointwise limit]
\label{L:Riemann}
After extracting a subsequence if necessary, the sequence $u^\eps$ converges pointwise
to a limiting function $u$ lying in the 
space $BV([-M,M])$ (of all functions with bounded variation): 
$$
u^\eps(\xi)\to u(\xi),\qquad \xi\in[-M,M],
$$
which satisfies, in the sense of distributions,
 \be
   \label{VarCL}
\aligned
&     -\xi\ \dfrac{d}{d\xi}\lpcL(u) +
     \dfrac{d}{d\xi} f_-(\lpcL(u))=0,  \qquad \xi <0, 
     \\
&     -\xi\ \dfrac{d}{d\xi} \lpcR(u) +
     \dfrac{d}{d\xi} f_+(\lpcR(u))=0, \qquad \xi >0.
   \endaligned
\ee
\end{lemma}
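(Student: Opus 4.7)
The plan is to combine a Helly-type compactness argument with a chain-rule reformulation in order to pass to the limit in \eqref{eq5}. Proposition~\ref{UnifTVB} furnishes uniform $L^\infty$ and $BV$ bounds on $u^\eps$, so Helly's selection theorem produces a subsequence (not relabeled) converging pointwise on $[-M,M]$, and in $L^1$, to a limit $u\in BV([-M,M])$. Meanwhile, the explicit Gaussian-like formula for $v^\eps$ in \eqref{eq:Repres} shows that $v^\eps\to\sgn\xi$ pointwise on $(-M,0)\cup(0,M)$, and that both $v^\eps-\sgn\xi$ and $v^\eps_\xi$ decay exponentially, uniformly on every compact subset of that set.

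By symmetry it suffices to obtain the limit equation on $(-M,0)$. The essential difficulty is that $u^\eps_\xi$ is only a bounded measure, so the nonlinear products $A_j(u^\eps,v^\eps)u^\eps_\xi$ cannot be handled directly. To bypass this, pick primitives $C(u,v)$ and $F(u,v)$ (in the $u$-variable) of $A_0$ and $A_1$, normalized by means of the consistency relations \eqref{Assum1b} so that $C(u,-1)=\lpcL(u)$ and $F(u,-1)=f_-(\lpcL(u))$. The chain rule then rewrites
\begin{equation*}
A_0(u^\eps,v^\eps)\,u^\eps_\xi=\tfrac{d}{d\xi}C(u^\eps,v^\eps)-\partial_vC(u^\eps,v^\eps)\,v^\eps_\xi,
\end{equation*}
and similarly for $A_1$ with $F$. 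Substituting into \eqref{eq5}, testing against $\varphi\in C_c^\infty(-M,0)$, and integrating by parts yields
\begin{equation*}
\int_{-M}^0\!\bigl(\xi\,C(u^\eps,v^\eps)-F(u^\eps,v^\eps)\bigr)\varphi'\,d\xi+\int_{-M}^0\!C(u^\eps,v^\eps)\,\varphi\,d\xi+R_\eps(\varphi)=0,
\end{equation*}
where $R_\eps(\varphi)$ gathers the viscous remainder $\eps\!\int\!B_0(u^\eps,v^\eps)u^\eps_\xi\,\varphi'\,d\xi$ and the $v^\eps_\xi$-correction $\int\!(\xi\,\partial_vC-\partial_vF)v^\eps_\xi\,\varphi\,d\xi$.

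The final step is to pass to the limit $\eps\to0$. The viscous remainder is dominated by $\eps\,c_3\,TV(u^\eps)\,\|\varphi'\|_\infty$, which vanishes by Proposition~\ref{UnifTVB}. On $\supp\varphi$, $v^\eps_\xi$ converges uniformly to $0$ while $\partial_vC$, $\partial_vF$ and $\xi$ stay bounded, so the correction vanishes too. Pointwise convergence $(u^\eps,v^\eps)\to(u,-1)$ on $(-M,0)$ together with continuity of $C$ and $F$ gives $C(u^\eps,v^\eps)\to\lpcL(u)$ and $F(u^\eps,v^\eps)\to f_-(\lpcL(u))$ almost everywhere; dominated convergence then delivers
\begin{equation*}
\int_{-M}^0\!\bigl(\xi\,\lpcL(u)-f_-(\lpcL(u))\bigr)\varphi'\,d\xi+\int_{-M}^0\!\lpcL(u)\,\varphi\,d\xi=0,
\end{equation*}
which is precisely the weak form of the first identity in \eqref{VarCL}. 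The argument on $(0,M)$ is identical, using $v^\eps\to+1$ and the normalizations $C(u,+1)=\lpcR(u)$, $F(u,+1)=f_+(\lpcR(u))$ furnished by \eqref{Assum1b}. The main obstacle throughout is the one singled out above, namely the passage to the limit in products of a bounded measure against $v$-dependent coefficients; the chain-rule reformulation using $C$ and $F$ is what turns this into a question about pointwise limits of continuous functions.
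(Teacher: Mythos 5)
Your proposal is correct, and its skeleton matches the paper's: Helly compactness from the uniform $TV$ and $L^\infty$ bounds of Proposition~\ref{UnifTVB}, localization away from $\xi=0$, use of the consistency relations \eqref{Assum1b} to recognize exact $\xi$-derivatives, and vanishing of both the viscous term (bounded by $\eps\,c_3\,TV(u^\eps)\|\varphi'\|_\infty$) and the coupling correction. Where you differ is in how the $v$-dependence is disposed of. The paper does not introduce primitives: it replaces $A_j(u^\eps,v^\eps)$ by $A_j(u^\eps,\pm1)$ and controls the commutator $\Omega^\eps$ by $(M\omega_0+\omega_1)\,|1-v^\eps(\theta)|\,\|\phi\|_\infty\,TV(u^\eps)$, exploiting the monotonicity of $v^\eps$ (so $|1-v^\eps|\le|1-v^\eps(\theta)|$ on $[\theta,M]$), the Lipschitz constants $\omega_j$, and the $TV$ bound; only then does it apply the chain rule with $v$ frozen at $\pm1$. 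You instead take $u$-primitives $C,F$ of $A_0,A_1$ at variable $v$, integrate by parts completely, and absorb the coupling error into a term proportional to $v^\eps_\xi$, killed by the uniform exponential decay of $v^\eps_\xi$ on $|\xi|\ge\theta$; this makes the final limit passage an explicit application of dominated convergence (the paper leaves that last step implicit), and it does not need the $TV$ bound for the correction term. The price is the existence of a primitive $F$ of $A_1$ with $F(u,\pm1)=f_\pm(\lpcLR(u))$ — automatic in the scalar case $N=1$ (and in the spirit of the paper's assumption that $A_0=D_uC$ with $C(u,\pm1)=\lpcLR(u)$), but it would not carry over to the system setting of Sections~\ref{section4}--\ref{construction}, whereas the paper's commutator estimate does; also note that only the normalization at $v=-1$ (resp.\ $v=+1$) is needed on each half-line, and additive constants in $C,F$ drop out of your weak formulation, so the simultaneous normalization you invoke is harmless but not actually required.
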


\begin{lemma}[Entropy inequalities]
 \label{L:Entropy}
The limit $u$ given by Lemma~\ref{L:Riemann} also satisfies, in the sense of distributions, 
 \be
   \label{VarEntrop}
   \aligned
     -\xi\ \dfrac{d}{d\xi} \eta(\lpcL(u)) +
     \dfrac{d}{d\xi} q_-(\lpcL(u)) \leq 0, \qquad \xi <0, 
     \\
     -\xi\ \dfrac{d}{d\xi} \eta(\lpcR(u)) +
     \dfrac{d}{d\xi} q_+(\lpcR(u)) \leq 0, \qquad \xi >0,  
   \endaligned
\ee
for all convex entropy functions $\eta$ and associated entropy flux $q_\pm'=\eta' f_\pm'$.
\end{lemma}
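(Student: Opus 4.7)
My plan is to multiply the first equation in \eqref{eq5} by the entropy variable $\eta'(\gamma_-(u^\eps))$ on $\xi<0$ (and by $\eta'(\gamma_+(u^\eps))$ on $\xi>0$), use the consistency conditions \eqref{Assum1b} to rewrite the left-hand side as an entropy--flux conservation identity for $\eta\circ\gamma_-(u^\eps)$ up to an error measuring the deviation of $v^\eps$ from $-1$, and exploit the convexity of $\eta$ together with the positivity \eqref{Assum1a} to produce a nonpositive dissipation on the right-hand side. Passage to the distributional limit will then be routine. The argument on $\xi>0$ is obtained by swapping $-1\leftrightarrow+1$ and $\gamma_-,f_-,q_-\leftrightarrow\gamma_+,f_+,q_+$, so I describe only $\xi<0$.

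\textbf{Key identity.} Using $A_0(u,-1)=\gamma_-'(u)$, $A_1(u,-1)=f_-'(\gamma_-(u))\,\gamma_-'(u)$, and $q_-'=\eta'f_-'$, the chain rule gives
\begin{equation*}
\eta'(\gamma_-(u^\eps))\bigl(-\xi A_0(u^\eps,-1)+A_1(u^\eps,-1)\bigr)u^\eps_\xi
=-\xi\,(\eta\circ\gamma_-(u^\eps))_\xi+(q_-\circ\gamma_-(u^\eps))_\xi,
\end{equation*}
while on the right-hand side of \eqref{eq5},
\begin{equation*}
\eta'(\gamma_-(u^\eps))\,\eps\,(B_0u^\eps_\xi)_\xi
=\eps\bigl(\eta'(\gamma_-(u^\eps))B_0u^\eps_\xi\bigr)_\xi
-\eps\,\eta''(\gamma_-(u^\eps))\,A_0(u^\eps,-1)\,B_0(u^\eps,v^\eps)\,(u^\eps_\xi)^2.
\end{equation*}
Combining, and reinstating the true $A_j(u^\eps,v^\eps)$ coefficients on the left, one obtains
\begin{equation*}
-\xi\,(\eta\circ\gamma_-(u^\eps))_\xi+(q_-\circ\gamma_-(u^\eps))_\xi+E^\eps
=\eps\bigl(\eta'(\gamma_-(u^\eps))B_0u^\eps_\xi\bigr)_\xi-D^\eps,
\end{equation*}
with $D^\eps\geq0$ by convexity of $\eta$ and \eqref{Assum1a}, and with the Lipschitz estimate $|E^\eps(\xi)|\leq C(1+|\xi|)|v^\eps(\xi)+1|\,|u^\eps_\xi(\xi)|$ furnished by the moduli $\omega_0,\omega_1$.

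\textbf{Passage to the limit.} Test against an arbitrary nonnegative $\varphi\in C_c^\infty((-\infty,0))$. Discarding $-D^\eps\leq0$ produces an inequality whose right-hand side splits into two pieces, both of which tend to $0$ as $\eps\to0$. Integration by parts bounds the viscous term by $\eps\,\|\varphi_\xi\|_\infty\|\eta'B_0\|_\infty\,TV(u^\eps)\leq C\eps|u_R-u_L|$ via Proposition~\ref{UnifTVB}, and the error term is bounded by $C\sup_{\xi\in\mathrm{supp}\,\varphi}(1+|\xi|)|v^\eps(\xi)+1|\cdot TV(u^\eps)$; the explicit Gaussian formula for $v^\eps$ in \eqref{eq:Repres} forces $|v^\eps+1|$ to decay faster than any power of $\eps$ uniformly on compact subsets of $(-\infty,0)$, killing this contribution. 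On the left-hand side, Lemma~\ref{L:Riemann} together with dominated convergence gives $L^1_{\mathrm{loc}}$ convergence of $\eta\circ\gamma_-(u^\eps)$ and $q_-\circ\gamma_-(u^\eps)$, so their distributional derivatives pass to the limit. This delivers the first line of \eqref{VarEntrop}. The only mildly delicate point, which I regard as the main obstacle, is that $u^\eps_\xi$ is only controlled in $L^1$ and can concentrate wildly; the error $E^\eps$ survives this concentration solely because its prefactor $|v^\eps+1|$ is uniformly (indeed super-polynomially) small away from the origin, a fact for which the explicit formula \eqref{eq:Repres} is indispensable.
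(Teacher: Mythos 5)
Your proposal is correct and follows essentially the same route as the paper: multiply \eqref{eq5} by $\eta'$ of the entropy variable times a nonnegative test function, use the consistency relations \eqref{Assum1b} to rewrite the left-hand side as the entropy--flux combination up to a coupling error controlled by $|v^\eps\mp1|$ and $TV(u^\eps)$, discard the nonpositive $\eta''$-dissipation, and let $\eps\to0$. The only cosmetic difference is how the coupling error is killed: you invoke the super-polynomial smallness of $|v^\eps+1|$ away from the origin from the Gaussian formula, whereas the paper simply uses the monotonicity of $v^\eps$ and the pointwise convergence $v^\eps(\theta)\to\pm1$; both suffice.
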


\begin{proof}[Proof of Lemma \ref{L:Riemann}]
In view of Lemma~\ref{UnifTVB}, Helly's compactness theorem applies and, as $\eps \to 0$, 
ensures the existence of a pointwise limit $(u,v)$ with bounded variation. 
Fix $\theta in (0,M)$ and let
 $\phi\in C^\infty_0((\theta,M))$ be a compactly supported test-function. 
 In the integral form,  \eqref{eq5} becomes 
$$
-\int_0^M \xi\ A_0(u^\eps,v^\eps)u^\eps_\xi\ \phi \, d\xi 
+ \int_0^M  A_1(u^\eps,v^\eps)u^\eps_\xi\ \phi \, d\xi 
= \eps \int_0^M \left(B_0(u^\eps,v^\eps)u^\eps_\xi\right)_\xi\phi \, d\xi,
$$
that is 
 $$
 -\int_0^M \xi\ A_0(u^\eps,1)u^\eps_\xi\ \phi \, d\xi 
 + \int_0^M  A_1(u^\eps,1)u^\eps_\xi\ \phi \, d\xi 
 + \Omega^\eps
 = \eps \int_0^M \left(B_0(u^\eps,v^\eps)u^\eps_\xi\right)_\xi\phi \, d\xi,
 $$
 where 
 $$
 \Omega^\eps := 
 \int_0^M \xi\ \big(A_0(u^\eps,1)- A_0(u^\eps,v^\eps)\big)u^\eps_\xi\ \phi  \, d\xi 
 -
 \int_0^M  \big(A_1(u^\eps,1)-A_1(u^\eps,v^\eps)\big)u^\eps_\xi\ \phi \, d\xi.
 $$
Using \eqref{Assum1a} and \eqref{Assum1b}, we can write 
 $$
 -\int_0^M \xi\ \dfrac{d}{d\xi}\lpcR(u^\eps)\ \phi \, d\xi 
 + \int_0^M  \dfrac{d}{d\xi} f_+(\lpcR(u^\eps))\ \phi \, d\xi 
 + \Omega^\eps 
 = \eps \int_0^M \left(B_0(u^\eps,v^\eps)u^\eps_\xi\right)_\xi\phi \, d\xi. 
 $$

 The term $\Omega^\eps$ vanishes with $\eps$, since
  $$
 \begin{aligned}
   |\Omega^\eps|
   & \leq M\  \int_\theta^M \omega_0|1-v^\eps|\
   |u_\xi^\eps|\ |\phi|  \, d\xi  
   +\ \int_\theta^M\omega_1
   |1-v^\eps|\ |u_\xi^\eps|\ |\phi|  \, d\xi 
   \\
   & \leq (M\ \omega_0+\omega_1)\ |1-v^\eps(\theta)|\ \|\phi\|_\infty TV(u^\eps),
 \end{aligned}
 $$
 where the total variation term $TV(u^\eps)$
 remains bounded and $|1-v^\eps(\theta)|$ tends to 0. 
 On the other hand, we have 
 $$
 \aligned
 \left|\eps\int_0^M\left(B_0(u^\eps,v^\eps)u^\eps_\xi\right)_\xi \phi  \, d\xi \right| 
 & = \left|
   \eps\int_0^M\left(B_0(u^\eps,v^\eps)u^\eps_\xi\right)\
   \phi_\xi  \, d\xi \right| 
\\
   & \leq \eps\ 
  \|\phi_\xi\|_\infty\ c_3 \, TV(u^\eps), 
\endaligned
 $$
which also converges to zero. As $\eps$ tends to $0$, we conclude that 
 $$
 \int_0^M \left(-\xi\ \dfrac{d}{d\xi}\lpcR(u) +
   \dfrac{d}{d\xi} f_+(\lpcR(u))\right)\phi  \, d\xi = 0, 
 $$
 which is the first condition in \eqref{VarCL}.  
 The same arguments apply on the interval $[-M,0]$, by using test-functions supported 
 in the interval $(-M,\theta)$, with $\theta<0$.  
\end{proof}


\begin{proof}[Proof of Lemma \ref{L:Entropy}] 
Fix $\theta >0$ and let $\phi\in C^\infty_0((\theta,M))$ be a non-negative test-function.
 Multiplying \eqref{eq5} by $\eta'(C(u^\eps,1)) \phi$, we get  
\be
\label{555}
 \aligned
& -\int_0^M \xi
 A_0(u^\eps,v^\eps) u^\eps_\xi \eta'(C(u^\eps,1))\
 \phi  \, d\xi 
 + \int_0^M
 A_1(u^\eps,v^\eps) u^\eps_\xi \eta'(C(u^\eps,1))\
 \phi  \, d\xi 
\\
& =
 \eps \int_0^M \big(B_0(u^\eps,v^\eps)u^\eps_\xi\big)_\xi\eta'(C(u^\eps,1))\
 \phi  \, d\xi. 
\endaligned
\ee
Observing that 
 $$
 \aligned
& \left|\int_0^M \xi\left(A_0(u^\eps,v^\eps) - A_0(u^\eps,1)\right)
   u^\eps_\xi \eta'(C(u^\eps,1))\
   \phi  \, d\xi \right|
   \\
   &\leq M\ \omega_0 |1-v^\eps(\theta)|\ TV(u^\eps)\
 \|\eta'(C(\cdot,1))\|_\infty \|\phi\|_\infty, 
 \endaligned
 $$
and similarly for the coefficient $A_1$, 
we see that the left-hand side of \eqref{555} is equivalent
(modulo terms that tend to zero with $\eps$) to 
 $$
 \begin{aligned}
   &-\int_0^M \xi
   \del_u(C(u^\eps,1)) \, u_\xi^\eps \eta'(C(u^\eps,1))\ \phi  \, d\xi 
   + \int_0^M A_1(u^\eps,1) \, u_\xi^\eps \eta'(C(u^\eps,1))\ \phi \, d\xi 
   \\
   & = -\int_0^M \xi \dfrac{d}{d\xi} \eta(\lpcR(u^\eps))\ \phi \, d\xi 
   + \int_0^M \dfrac{d}{d\xi} q_+(\lpcR(u^\eps))\ \phi \, d\xi.
 \end{aligned}
 $$
  On the other hand, the right-hand side of \eqref{555} 
 can be rewritten in the form
 $$
 \begin{aligned}
   &\eps\int_0^M
   \big(B_0(u^\eps,v^\eps)u^\eps_\xi\big)_\xi\eta'(C(u^\eps,1))\
   \phi  \, d\xi 
   \\
   &=
   - \eps \int_0^M
   B_0(u^\eps,v^\eps)\left(u_\xi^\eps\right)^2A_0(u^\eps,1)\eta''(C(u^\eps,1))\ \phi  \, d\xi 
\\
& \quad  - \eps\int_0^M B_0(u^\eps,v^\eps)u^\eps_\xi \eta'(C(u^\eps,1))\
   \phi_\xi  \, d\xi,
 \end{aligned}
 $$
in which the first term is non-positive and the second one tends to 0. 
Thus, letting $\eps \to 0$ we obtain 
 $$
 -\int_0^M \xi \dfrac{d}{d\xi} \eta(\lpcR(u))\ \phi \, d\xi 
 + \int_0^M \dfrac{d}{d\xi} q_+(\lpcR(u))\ \phi \, d\xi 
 \leq 0,
 $$
which yields the second identity in the statement of the lemma. The 
derivation of the first identity in the half-space $\xi<0$ is completely similar.
\end{proof}


\subsection{Riemann problem for the hyperbolic coupling problem}

In view of the boundary condition \eqref{BC}, it is natural to extend $u$ by 
\be
\label{Recover}
   u(\xi)=
   \begin{cases}
     u_L, & \xi\leq -M,\\
     u_R, & \xi\geq M.
 \end{cases}
\ee
The conclusions in Lemmas \ref{L:Riemann} and \ref{L:Entropy} then clearly hold on the intervals 
$(-\infty,-M)$ and $(M,+\infty)$. In addition, an interface condition for the solution $u$ at the end points 
$\xi=-M$ and $\xi=M$ is now derived, which 
is necessary to ensure that \eqref{VarCL} and \eqref{VarEntrop} extend 
(in the sense of distributions) to $(0,+\infty)$ and $(-\infty,0)$. 

\begin{lemma}
\label{L:Rrecov}
As $\eps$ tends to $0$, the solution $u^\eps$ converges uniformly toward $u_R$ (repectively $u_L$) on the interval
 $(\Lambda,M]$ (resp. $(-M,-\Lambda[$).
\end{lemma}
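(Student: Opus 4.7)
The plan is to combine the explicit representation in Proposition~\ref{UnifTVB} with the linear ODE satisfied by the density $\varphi^\eps := B_0(u^\eps,v^\eps)\,u^\eps_\xi$, and to show that $\varphi^\eps$ is exponentially concentrated inside the subcharacteristic window $[-\Lambda,\Lambda]$. I will treat only the right-hand statement; the left-hand one is entirely symmetric under $\xi\mapsto -\xi$.

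First I would read off from \eqref{eq4} that $\varphi^\eps$ solves the scalar homogeneous linear ODE
$$
\eps\,\varphi^\eps_\xi \;=\; \big(-\xi + \lambda(u^\eps,v^\eps)\big)\,G(u^\eps,v^\eps)\,\varphi^\eps,
$$
so $\varphi^\eps$ has constant sign. The relation $\int_{-M}^M \varphi^\eps B_0^{-1}\,d\zeta = u_R - u_L$ coming from the boundary conditions fixes this sign, and replacing $u$ by $-u$ if needed I may assume $u_L<u_R$, so that $\varphi^\eps>0$ and $u^\eps$ is non-decreasing. Using $|\lambda|\leq\Lambda$ and $G\geq c_1/c_3$, the coefficient of the ODE satisfies $(-\xi+\lambda)G \leq -(c_1/c_3)(\xi-\Lambda)\leq 0$ on $[\Lambda,M]$, so $\varphi^\eps$ is non-increasing there.

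Next I would extract an \emph{$\eps$-independent} pointwise bound on $\varphi^\eps$ out of monotonicity and the total variation bound: for $\xi\in(\Lambda,M]$,
$$
\frac{\xi-\Lambda}{c_3}\,\varphi^\eps(\xi) \;\leq\; \int_\Lambda^\xi \varphi^\eps B_0^{-1}\,d\zeta \;=\; u^\eps(\xi)-u^\eps(\Lambda) \;\leq\; u_R - u_L,
$$
whence $\varphi^\eps(\xi)\leq c_3(u_R-u_L)/(\xi-\Lambda)$. Then I would insert this into the exponential decay estimate $\varphi^\eps(\zeta)\leq \varphi^\eps(\xi)\exp\bigl(-c_1(\xi-\Lambda)(\zeta-\xi)/(c_3\eps)\bigr)$ for $\zeta\geq\xi$, obtained by integrating the ODE, and integrate in $\zeta$ over $[\xi,M]$ to get
$$
u_R - u^\eps(\xi) \;=\; \int_\xi^M \varphi^\eps B_0^{-1}\,d\zeta \;\leq\; \frac{c_3^2(u_R-u_L)}{c_1 c_2}\cdot\frac{\eps}{(\xi-\Lambda)^2}.
$$
This delivers uniform convergence of $u^\eps$ toward $u_R$ on every compact subinterval $[\Lambda+\delta,M]$ of $(\Lambda,M]$; convergence to $u_L$ on $[-M,-\Lambda-\delta]$ follows by the symmetric argument.

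The main obstacle will be the $\eps$-independent pointwise bound on $\varphi^\eps(\xi)$. The exponential decay estimate by itself only transports a bound at $\xi$ into bounds further to the right, and is circular; what breaks the circularity is the monotonicity of $\varphi^\eps$ on the subsonic region $[\Lambda,M]$, which converts the global BV estimate of Proposition~\ref{UnifTVB} into a usable pointwise bound. Once this is in hand, everything else is elementary integration of a scalar linear ODE.
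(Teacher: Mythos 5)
Your argument is correct, and it takes a genuinely different route from the paper. The paper works entirely from the fixed-point representation \eqref{eq:Repres}: it compares the numerator and denominator through the exponent $h^\eps(u^\eps;\cdot)$, shows (using the $L^1$ convergence of $u^\eps,v^\eps$ along the subsequence produced by Helly's theorem) that $h^\eps$ converges uniformly to the limiting weight $h$, and then exploits $h\geq A$ on the far region versus $h\leq B<A$ near the minimizer $\alpha$ to conclude that the tail integral is exponentially small, of order $e^{-(A-B)/\eps}$. You instead never pass to the limit: you read off the scalar ODE $\eps\,\varphi^\eps_\xi=(-\xi+\lambda)G\,\varphi^\eps$ for $\varphi^\eps=B_0u^\eps_\xi$, use its sign and monotonicity on $[\Lambda,M]$ together with the TV/monotonicity bound of Proposition~\ref{UnifTVB} to obtain the $\eps$-independent pointwise bound $\varphi^\eps(\xi)\leq c_3(u_R-u_L)/(\xi-\Lambda)$, and then integrate the exponential decay to get the explicit rate $|u_R-u^\eps(\xi)|\leq C\,\eps/(\xi-\Lambda)^2$. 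What your approach buys: it is quantitative (a rate in $\eps$), it applies to the whole family $\eps\to 0$ rather than to the subsequence on which $u^\eps\to u$, and it avoids the slightly awkward circularity of invoking the limit $u$ (Lemma~\ref{L:Riemann}) to control the exponent $h$; all steps (sign of $\varphi^\eps$ from the boundary data, monotonicity of $\varphi^\eps$ where $(-\xi+\lambda)G\leq -(c_1/c_3)(\xi-\Lambda)$, the constants $c_1,c_2,c_3$ from \eqref{Assum1a}) check out. What the paper's approach buys is mainly that it stays within the representation-formula framework already set up for the fixed point, at the price of subsequences and non-explicit constants $A,B,\eta,\eps_0$.

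One small point to be transparent about: your estimate degenerates as $\xi\downarrow\Lambda$, so strictly you obtain uniform convergence on each $[\Lambda+\delta,M]$, i.e.\ locally uniform convergence on $(\Lambda,M]$, not uniform convergence on the whole half-open interval as the lemma literally states. This is not a defect relative to the paper: the paper's own proof only treats $\xi\in\big(\tfrac{\Lambda+M}{2},M\big)$ and has exactly the same limitation near $\Lambda$ (and uniformity on all of $(\Lambda,M]$ would anyway require $\Lambda$ to be a strict upper bound on $|\lambda|$, in which case your bound with that strict bound in place of $\Lambda$ gives it immediately).
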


\begin{proof}
Let $\xi\in\left({\Lambda+M\over 2},M\right)$ be given. 
According to \eqref{eq:Repres} we have
$$ 
 \aligned
 |u^\eps(\xi)-u_R|
 & =|u_L-u_R| \, 
 {\displaystyle\int_\xi^M  e^{-h^\eps(u^\eps)/\eps}B_0(u^\eps,v^\eps)^{-1} \, d\xi 
     \over \displaystyle\int_{-M}^M e^{-h^\eps(u^\eps)/\eps}B_0(u^\eps,v^\eps)^{-1}\, d\xi }
     \\
 & \leq |u_L-u_R| \, 
 {\displaystyle\int_{\Lambda+M\over 2}^M  e^{-h^\eps(u^\eps)/\eps}B_0(u^\eps,v^\eps)^{-1}\, d\xi 
    \over \displaystyle\int_{-M}^M e^{-h^\eps(u^\eps)/\eps}B_0(u^\eps,v^\eps)^{-1}\, d\xi }.
\endaligned
$$
We use here the constant $\alpha\in[-\Lambda,\Lambda]$ as the lower
integration bound for both $h^\eps$ and the function ($\xi \in \RR$) 
$$
h(\xi) := \int_\alpha^\xi\big(\zeta-\lambda(u(\zeta),v(\zeta))\big)G(u(\zeta),v(\zeta))\,d\zeta, 
$$
hence $h \geq 0$. Moreover, $h^\eps(u^\eps,\cdot)$ converges uniformly to $h$, with 
$$
\begin{aligned}
&\big|h^\eps(u^\eps,\xi)-h(\xi)\big|
\\
& =\left|\int_\alpha^\xi
 \big(\lambda(u^\eps(\zeta),v^\eps(\zeta))-\lambda(u(\zeta),v(\zeta))\big)G(u^\eps(\zeta),v^\eps(\zeta))\,d\zeta\right|\\
& \leq \left({1\over c}\omega_1+\|A_1\|_\infty{\omega_0\over
   c^2}\right)\|G\|_\infty\int_{-M}^M\big(|u^\eps(\zeta)-u(\zeta)|+|v^\eps(\zeta)-v(\zeta)|\big)\,d\zeta\\ 
& \leq \left({1\over c}\omega_1+\|A_1\|_\infty{\omega_0\over
   c^2}\right)\|G\|_\infty\big(\|u^\eps-u\|_{L^1} + \|v^\eps-v\|_{L^1} \big). 
\end{aligned}
$$
The uniform convergence of $h^\eps(u^\eps,\cdot)$ toward a positive continuous function $h$ such that $h(\alpha)=0$
insures that there exists $\eps_0>0$ together with $A>B>0$ and $\eta>0$
such that, for all $\eps<\eps_0$,
$$
\begin{aligned}
& h^\eps(u^\eps,\xi)\geq A,\qquad {\Lambda+M\over 2}\leq\xi\leq M, 
\\
& h^\eps(u^\eps,\xi)\leq B,\qquad \xi\in[-M,M],\ |\xi-\alpha| \leq \eta.
\end{aligned}
$$
Thus, we deduce that 
$$
\aligned 
|u^\eps(\xi)-u_R|
& \leq |u_R-u_L|{{M-\Lambda\over
   2}e^{-A/\eps}c_2^{-1}\over\eta e^{-B/\eps}c_3^{-1}}
   \\
   & =
|u_R-u_L|{M-\Lambda\over 2\eta}{c_3\over c_2}e^{-(A-B)/\eps},
\endaligned
$$
and $u^\eps$ converges uniformly toward $u_R$ on
the open interval $({\Lambda+M\over 2},M)$.
The same argument leads to the uniform convergence of $u^\eps$ toward $u_L$ on $(-M,{\Lambda-M\over 2})$.
\end{proof}

We summarize the results established in Lemmas \ref{L:Riemann}, \ref{L:Entropy}, and \ref{L:Rrecov}, 
as follows.

\begin{theorem}[The Riemann problem for the coupling of two scalar equations]
Up to extracting a subsequence, the solution $u^\eps$ to \eqref{eq5}-\eqref{BC}
converges pointwise to a function $u\in BV(\RR)$, 
$$
   u^\eps(\xi)\to u (\xi),\qquad \xi\in{\RR}, 
$$
which satisfies the conservation laws and entropy inequalities  
 $$
\begin{aligned}
     -\xi\ \dfrac{d}{d\xi}\lpcL(u) +
     \dfrac{d}{d\xi} f_-(\lpcL(u))=0,\qquad \xi<0,
     \\
     -\xi\ \dfrac{d}{d\xi}\lpcR(u) +
     \dfrac{d}{d\xi} f_+(\lpcR(u))=0,\qquad \xi>0,
\end{aligned}
$$
and 
$$
\begin{aligned}
     -\xi\ \dfrac{d}{d\xi} \eta(\lpcL(u)) +
     \dfrac{d}{d\xi} q_-(\lpcL(u)) \leq 0,\qquad \xi<0, 
     \\
     -\xi\ \dfrac{d}{d\xi} \eta(\lpcR(u)) +
     \dfrac{d}{d\xi} q_+(\lpcR(u)) \leq 0,\qquad \xi>0, 
\end{aligned}
$$
for all convex entropy pairs, together with the boundary conditions
 $$
     u(-\infty)=u_L,\qquad 
     u(+\infty)=u_R.
 $$ 
\end{theorem}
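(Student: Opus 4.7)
The final theorem packages Lemmas~\ref{L:Riemann}, \ref{L:Entropy}, and \ref{L:Rrecov} into a single statement valid on the whole real line. My plan is to first extract a pointwise limit on a bounded slab $[-M,M]$ for any fixed $M > \Lambda$, then invoke the preceding lemmas to obtain the conservation laws and entropy inequalities away from the interface, and finally use the uniform convergence at the end points to extend the statement to $\RR$ through \eqref{Recover}.

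Concretely, I would fix $M > \Lambda$ arbitrary. Proposition~\ref{UnifTVB} provides, for each $\eps>0$, a smooth solution $(u^\eps,v^\eps)$ on $[-M,M]$ with $TV(u^\eps)\leq|u_R-u_L|$ and $TV(v^\eps)\leq 2$, both bounds being independent of $\eps$. Helly's selection theorem then yields a subsequence $\eps_k\to 0$ along which $u^{\eps_k}\to u$ pointwise on $[-M,M]$, with $u\in BV([-M,M])$. Lemma~\ref{L:Riemann} guarantees that the limit $u$ satisfies the conservation laws \eqref{VarCL} in the distributional sense on the open intervals $(-M,0)$ and $(0,M)$, while Lemma~\ref{L:Entropy} upgrades this to the entropy inequalities \eqref{VarEntrop} for every convex entropy pair $(\eta,q_\pm)$.

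Next, I extend $u$ to all of $\RR$ by the prescription \eqref{Recover}, setting $u\equiv u_L$ for $\xi\leq -M$ and $u\equiv u_R$ for $\xi\geq M$. Lemma~\ref{L:Rrecov} asserts that $u^\eps$ converges uniformly to $u_R$ on $(\Lambda,M]$ and to $u_L$ on $[-M,-\Lambda)$; consequently the pointwise limit $u$ already coincides with $u_R$ on $(\Lambda,M)$ and with $u_L$ on $(-M,-\Lambda)$, so the extension agrees with the limit on the overlap and defines a genuine $BV(\RR)$ function. Because $u$ is constant outside $[-\Lambda,\Lambda]$, the conservation laws and entropy inequalities hold trivially on $|\xi|>\Lambda$; patching with the content of Lemmas~\ref{L:Riemann} and~\ref{L:Entropy} inside $(-M,0)\cup(0,M)$ delivers them on all of $(-\infty,0)\cup(0,+\infty)$. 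The boundary conditions $u(\pm\infty)=u_{R/L}$ are built directly into \eqref{Recover}.

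No serious obstacle remains at this stage: the hard analytic work (the Schauder fixed-point construction, the vanishing of the boundary-layer term $\Omega^\eps$ carrying the $v$-dependence, and the uniform convergence of $h^\eps$ to its positive limit) has been absorbed into the three preceding lemmas. The only point worth flagging is that the quantitative ingredients used throughout — the total variation estimate, the Lipschitz constants $\omega_0,\omega_1$, the viscosity bounds $c_2,c_3$, and the speed bound $\Lambda$ — are all independent of $M$, which is precisely what permits $M$ to be chosen freely above $\Lambda$ and avoids the need for an additional diagonal extraction as $M\to\infty$.
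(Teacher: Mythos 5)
Your proposal is correct and follows essentially the same route as the paper, which presents this theorem precisely as the assembly of Proposition~\ref{UnifTVB} with Lemmas~\ref{L:Riemann}, \ref{L:Entropy}, and \ref{L:Rrecov}, using the extension \eqref{Recover} and the uniform convergence to $u_L$, $u_R$ near $\xi=\pm M$ to rule out spurious jumps there so that the equations and entropy inequalities extend to the full half-lines. Your closing remark that no $M\to\infty$ diagonal argument is needed (since the limit is constant outside $[-\Lambda,\Lambda]$) is consistent with how the paper actually concludes.
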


Equivalently, in terms of the function $w$ in \eqref{ChangeVariable}, we have established
 $$
\begin{aligned}
     -\xi \dfrac{d}{d\xi} w + \dfrac{d}{d\xi} f_-(w) =0,\qquad -\xi \dfrac{d}{d\xi} \eta(w) + \dfrac{d}{d\xi} q_-(w) \leq 0,\qquad \xi<0,
     \\
     -\xi \dfrac{d}{d\xi} w + \dfrac{d}{d\xi} f_+(w) =0,\qquad -\xi \dfrac{d}{d\xi} \eta(w) + \dfrac{d}{d\xi} q_+(w) \leq 0,\qquad \xi>0, 
\end{aligned}
 $$
 with 
 $$
     w(-\infty)=w_L,\qquad
     w(+\infty)=w_R.
 $$
We have thus established that the interface problem admits a solution which has bounded variation. 

 
\section{Existence theory for systems}
\label{section4}

\subsection{Terminology and notation}

We will now generalize the technique developed in Tzavaras~\cite{Tzavaras96} and Joseph and LeFloch~\cite{JosephLeFloch99,JosephLeFloch5}, 
and cover the class of nonconservative and resonant systems \eqref{1.3} 
under consideration. We follow closely the notation and presentation in \cite{JosephLeFloch5}. 

Specifically we consider the {\sl diffusive Riemann problem} \eqref{1.3} with Riemann data $u_L, u_R$,
and establish that, provided $u_L, u_R \in \calB(\delta_1)$ with
a sufficiently small $\delta_1 < \delta_0$ and under some structural hypotheses on the matrix fields $A_0, A_1$, 
this problem admits a smooth, self-similar solution,  
 $u_\eps = u_\eps(x/t) \in \calB(\delta_0)$
and $v_\eps = v_\eps(x/t) \in [-1,1]$, which has uniformly bounded total variation
\be
\label{1.5} 
TV(u_\eps) + TV(v_\eps) \leq C, 
\ee 
for some uniform $C>0$. 
Solutions to \eqref{1.3} will be decomposed in terms of ``wave strengths'' of the associated Riemann 
problem \eqref{eq:UV}. The uniform estimate \eqref{1.5} is the key to the convergence analysis as $\eps \to 0$,
 and the proof of the existence of the Riemann solution to the underlying hyperbolic problem, discussed in the following section. 

We are interested in solutions $u$ taking values in a
small neighborhood of a given state (normalized to be the origin without loss of
generality), that is, in the ball $\calU :=
\calB(\delta_0)$ with (small) radius $\delta_0$.
For each $u \in \calB(\delta_0)$ and $v\in [-1,1]$, let 
$\lam_1(u,v) < \ldots < \lambda_N(u,v)$ 
be the real and distinct eigenvalues of the $N\times N$ matrix
$$
A(u,v) := A_1(u,v) \, A_0(u,v)^{-1},
$$ 
and let $l_1(u,v),\ldots, l_N(u,v)$ and $r_1(u,v),\ldots, r_N(u,v)$ be basis of left- and
right-eigenvectors, respectively, normalized so that
$l_i(u,v) \cdot r_j(u,v) = 0$ if $i \neq j$ and $l_i(u,v) \cdot
r_i(u,v) = 1$.

By reducing $\delta_0$ if necessary, we may assume that the wave
speeds $\lam_i(u,v)$ are sufficiently close to the constants
$\lam_i(0,0)$ and, in particular, are uniformly separated for all $u
\in \calB(\delta_0)$ in the sense that, for some constants
$$
- M < \bLam_1 < \Lamb_1 < \bLam_2 < \ldots < \bLam_N < \Lamb_N < M, 
$$
\be
 \label{2.1}
 \bLam_i : = \lam_i(0,0) - O(\delta_0),  
 \quad \Lamb_i : = \lam_i(0,0) + O(\delta_0) 
\ee
and
\be
 \label{2.2}
 \bLam_i \leq \lam_i(u,v) \leq \Lamb_i, \quad u \in \calB(\delta_0), v \in [-1,1].
\ee
Let $m$ be the index associated with the resonant wave,
i.e.~such that $\lambda_m$ may change sign. 
In addition, for $\delta_0$ sufficiently small the vectors $r_i(u,v)$ are sufficiently close to $r_i(0,0)$
and we assume that
\be
 \label{2.3}
 \aligned
 & l_i (u_1,v) \cdot r_i (u_2,v) \geq 1 - \delta_0,
 \quad u_1, u_2 \in \calB(\delta_0), \, v \in [-1,1],
 \\ 
 & |l_i(u_1,v) \cdot r_j(u_2,v)| \leq \delta_0,
 \quad u_1, u_2 \in \calB(\delta_0), \, i \neq j, \, v \in [-1,1].
 \endaligned  
\ee

In \eqref{1.3}, the matrix $B_0=B_0(u,v)$ is assumed to be non-degenerate and 
depend smoothly upon $u$ and $v$. We treat the case that the diffusion
matrix $B(u,v) := B_0(u,v) \, A_0(u,v)^{-1}$ is sufficient close to
the identity matrix, that is, for some given matrix norm and $\eta>0$ sufficiently small, 
\be
\label{556}
\sup_{\substack{u \in \calB(\delta_0)\\v \in [-1,1]}} \left|B(u,v) - I\right| \leq \eta. 
\ee
To handle arbitrary diffusion matrices $B(u,v)$, we follow Joseph and LeFloch~\cite{JosephLeFloch07} 
and introduce 
the 
{\sl generalized eigenvalue problem:} 
\be
 \label{2.4}
 \aligned
 & \bigl( - \xi\textrm{ Id} + A(u,v) \bigr) \, \hatr_i(u,v,\xi) = \mu_i(u,v,\xi) \, B(u,v) \,
 \hatr_i(u,v,\xi),\\
 & \hatl_i(u,v,\xi) \cdot \bigl( - \xi\textrm{ Id} + A(u,v) \bigr) = \mu_i(u,v,\xi) \, \hatl_i(u,v,\xi) \cdot B(u,v). 
 \endaligned  
\ee
with unknowns the vectors $\hatr_i(u,v,\xi), \hatl_i(u,v,\xi)$ and the scalars $\mu_i(u,v,\xi)$. 
We impose the following normalization to generalized left- and right-eigenvectors: 
$$
 \aligned 
 & \hatr_i(u,v,\xi) \cdot \hatr_i(u,v,\xi) = 1, 
 \\ 
 & \hatl_i(u,v,\xi) \cdot B(u,v) \, \hatr_j(u,v,\xi) = 0 \quad \text{ if } i \neq j,
 \\
 &\hatl_i(u,v,\xi) \cdot B(u,v) \, \hatr_i(u,v,\xi) = 1.
 \endaligned 
$$
Multiplying the first equation in \eqref{2.4} on the left by $\hatr_i(u,v,\xi)$ 
and rearranging terms, we get
\be
 \label{2.5}
 \mu_i(u,v,\xi) =  \bigl( -\xi +\hatlam_i(u,v,\xi) \bigr)\, d_i(u,v,\xi), 
\ee
where
\be
 \label{2.6}
 \aligned
 & \hatlam_i(u,v,\xi) : = \hatr_i(u,v,\xi) \cdot A(u,v) \, \hatr_i(u,v,\xi),
 \\
 & 1/d_i(u,v,\xi) : = \hatr_i(u,v,\xi) \cdot B(u,v) \, \hatr_i(u,v,\xi).
 \endaligned 
\ee
Clearly, in the special case where $B(u,v) = I$, we find
$$
 \mu_i(u,v,\xi) = - \xi + \lam_i(u,v), 
 \quad \hatr_i(u,v,\xi) = r_i(u,v), 
 \quad \hatl_i(u,v,\xi) = l_i(u,v).
$$
So, by continuity, when $B$ gets closer to the identity matrix, the coefficients 
$d_i(u,v,\xi)$ and $\hatlam_i(u,v,\xi)$ get closer to $1$ and $\lam_i(u,v)$, respectively. 
In consequence, under the assumption $|B(u,v) - I| < \eta$ with $\eta$ sufficiently small
and by increasing the gaps $\Lamb_i - \bLam_i$ if necessary, we can always 
assume that
$$
\aligned
\bLam_i- O(\eta) &\leq \hatlam_i(u,v,\xi) \leq \Lamb_i+ O(\eta),
\\
1- O(\eta)  &\leq d_i(u,v,\xi) \leq 1 + O(\eta) 
\endaligned
$$
for $u \in \calB(\delta_0), \, v\in [-1,1], \, \xi \in [-M,M]$. 
The following property was pointed out in \cite{JosephLeFloch07}.

\begin{lemma}
The $\xi$-derivatives of the generalized eigenvectors and eigenvalues satisfy
\be
\label{2.8}
\begin{aligned}
 |\del_\xi \hatr_i(u,v,\xi)| &= O(\eta),
 \\
 \del_\xi \mu_i(u,v,\xi) &= -1 + O(\eta).
\end{aligned}
\ee
\end{lemma}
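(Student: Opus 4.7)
The plan is to differentiate the generalized eigenvalue relations \eqref{2.4} with respect to $\xi$ at fixed $(u,v)$, and then to exploit the biorthogonality $\hatl_j \cdot B\, \hatr_i = \delta_{ij}$ to project out the relevant coefficients. Differentiating the first equation in \eqref{2.4} yields the master identity
\be
\label{diffeq}
\bigl(-\xi\, \textrm{Id} + A(u,v) - \mu_i(u,v,\xi)\, B(u,v)\bigr) \del_\xi \hatr_i = \hatr_i + \bigl(\del_\xi \mu_i\bigr)\, B\, \hatr_i,
\ee
which I would use to extract both $\del_\xi \mu_i$ and the coordinates of $\del_\xi \hatr_i$ in the basis $\{\hatr_k\}$.

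To compute $\del_\xi \mu_i$, I would take the scalar product of \eqref{diffeq} with $\hatl_i$ on the left. Using the second equation in \eqref{2.4}, the left-hand side vanishes, leaving $\del_\xi \mu_i = -\hatl_i \cdot \hatr_i$ after applying the normalization $\hatl_i \cdot B\, \hatr_i = 1$. Writing
$$ \hatl_i \cdot \hatr_i = \hatl_i \cdot B\, \hatr_i + \hatl_i \cdot (I - B)\, \hatr_i = 1 + O(\eta), $$
where the remainder is controlled by \eqref{556} and the uniform boundedness of $\hatl_i, \hatr_i$, gives the second claim $\del_\xi \mu_i = -1 + O(\eta)$.

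For $\del_\xi \hatr_i$, I would decompose it in the right-eigenvector basis as $\del_\xi \hatr_i = \sum_k c_k \hatr_k$ with coefficients $c_k = \hatl_k \cdot B\, \del_\xi \hatr_i$ recovered via biorthogonality. For $k \neq i$, the scalar product of \eqref{diffeq} with $\hatl_k$ combined with biorthogonality yields
$$ (\mu_k - \mu_i)\, c_k = \hatl_k \cdot \hatr_i = \hatl_k \cdot (I-B)\, \hatr_i = O(\eta). $$
Using \eqref{2.5} with $d_k = 1 + O(\eta)$ and the separation of the $\hatlam_k$ established just before the lemma, a short computation shows $\mu_k - \mu_i = (\hatlam_k - \hatlam_i) + O(\eta(1+|\xi|))$, which stays bounded away from zero uniformly on $[-M,M]$ for $\eta$ small enough, so that $c_k = O(\eta)$ for $k \neq i$. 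The remaining diagonal coefficient $c_i$ is then handled by differentiating the normalization $\hatr_i \cdot \hatr_i = 1$, which gives $\hatr_i \cdot \del_\xi \hatr_i = 0$ and hence
$$ 0 = c_i + \sum_{k \neq i} c_k\, (\hatr_i \cdot \hatr_k) = c_i + O(\eta), $$
yielding $c_i = O(\eta)$ and consequently $|\del_\xi \hatr_i| = O(\eta)$. The only delicate point is the uniform lower bound on $|\mu_k - \mu_i|$ for $k \neq i$, which is precisely why the paper reserves the right to enlarge the gaps $\Lamb_i - \bLam_i$: once these gaps dominate the $O(\eta)$ corrections, the estimates are immediate.
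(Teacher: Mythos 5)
Your proof is correct. The paper itself offers no proof of this lemma --- it is stated as a property ``pointed out in'' Joseph--LeFloch \cite{JosephLeFloch07} --- and your argument is the natural computation behind that citation: differentiate the generalized eigenvalue relation \eqref{2.4} in $\xi$ at fixed $(u,v)$, project with $\hatl_i$ to get $\del_\xi\mu_i=-\hatl_i\cdot\hatr_i=-1+O(\eta)$, and project with $\hatl_k$, $k\neq i$, together with the differentiated normalization $\hatr_i\cdot\del_\xi\hatr_i=0$, to bound the coordinates of $\del_\xi\hatr_i$ in the basis $\{\hatr_k\}$. The steps you leave implicit are harmless and worth keeping in mind: the uniform boundedness of the $\hatl_i$ and the $\xi$-differentiability of $\mu_i,\hatr_i$ follow from the simplicity of the eigenvalues and the closeness of $B$ to the identity in \eqref{556}, the fact that $\{\hatr_k\}$ is a basis (so the expansion $\del_\xi\hatr_i=\sum_k c_k\hatr_k$ with $c_k=\hatl_k\cdot B\,\del_\xi\hatr_i$ is legitimate) holds for the same reason, and your uniform lower bound on $|\mu_k-\mu_i|$ for $k\neq i$ on $[-M,M]$ is correctly reduced to the separation of the $\hatlam_k$ once $\eta$ is small, exactly as the paper's remarks preceding the lemma allow.
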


Finally, we introduce the coefficient 
\be
\label{nu}
 \nu := \sup \left|\hatl_i\cdot\del_v(B\hatr_j)\right|
\ee 
allows one to measure how closed the left-hand and right-hand hyperbolic models are and, from now on, 
this coefficient is assumed to be  
sufficiently small.

\begin{example}[$p$-system] 
The following example illustrates the meaning of $\nu$.
Consider the coupling between two systems of two conservation laws, specifically
$p$-systems, with two different pressure laws $p_\pm=p_\pm(\tau)$
$$
\begin{aligned}
\del_t\tau-\del_x V &=0,\\
\del_t V + \del_xp_{\pm}(\tau)&=0, \qquad \qquad \tau>0, \quad V \in \RR, 
\end{aligned}
$$
in which the associated flux $F_\pm$ have Jacobian matrices 
$$
\nabla F_\pm=\begin{pmatrix}
          0 & -1\cr p_\pm'(\tau) & 0
         \end{pmatrix}.
$$
Suppose that the coupling is based on $B=\mathrm{Id}$ and the average matrix 
$$
A(\tau,v)=\frac{1+v}{2}\nabla F_++\frac{1-v}{2}\nabla F_-= \begin{pmatrix}
        0 & -1 \cr
	\frac{1+v}{2} p_+'(\tau)+ \frac{1-v}{2} p_-'(\tau) & 0
       \end{pmatrix}.
$$
Then, the eigenvalues and eigenvectors of the system are ($j=1,2$ and $\pm$ 
corresponding to the two wave families)
$$
\begin{aligned}
 \lambda_j(\tau,v)&=\pm\sqrt{-\frac{1+v}{2} p_+'(\tau)- \frac{1-v}{2}p_-'(\tau)},\\
 \hatr_j(\tau,v)&=\begin{pmatrix}
                1, & \mp\sqrt{-\frac{1+v}{2} p_+'(\tau)- \frac{1-v}{2}p_-'(\tau)} \, 
               \end{pmatrix},
               \\
\hatl_j(\tau,v)&=\begin{pmatrix}
             \pm\sqrt{-\frac{1+v}{2} p_+'(\tau)- \frac{1-v}{2}p_-'(\tau)}, & 1
            \end{pmatrix}, 
\end{aligned}
$$
and a tedious calculation yields 
$$
\aligned
\left|\hatl_i\cdot\del_v(B\hatr_j)\right|
& \leq 
\dfrac{|p_+'(\tau)-p_-'(\tau)|}{2\sqrt{-\frac{1+v}{2} p_+'(\tau)- \frac{1-v}{2}p_-'(\tau)}}
\\
& \leq 
\dfrac{|p_+'(\tau)-p_-'(\tau)|}{2\min\Big(\sqrt{-p_+'(\tau)},\sqrt{-p_-'(\tau)}\Big)}, 
\endaligned
$$
so that 
$$\nu \leq C \sup |p'_+-p'_-|.$$
\end{example}


\subsection{Equations satisfied by the characteristic coefficients}

We supplement (cf.~\eqref{1.3})
\be
\label{1.3-NEW} 
\begin{aligned}
   \big(-\xi A_0(u^\eps,v^\eps)+ A_1(u^\eps,v^\eps)\big)u^\eps_\xi &= \eps
   \bigr(B_0(u^\eps,v^\eps)u^\eps_\xi\bigr)_\xi,
   \\
   -\xi v^\eps_\xi &= \eps^p v^\eps_{\xi\xi}
\end{aligned}
\ee 
with the following boundary conditions, inherited
from Riemann initial data
\be
 \label{2.10}
 \aligned
 & u^\eps(- M) = u_L, 
 \quad
  u^\eps(M) = u_R,  \\
 & v^\eps(- M) = -1,
 \quad
  v^\eps(M) = 1.
 \endaligned
\ee
We describe now an asymptotic expansion for the solution of the diffusive Riemann
problem \eqref{1.3-NEW}-\eqref{2.10}. To handle an arbitrary diffusion matrix, the
decomposition must be based on the modified eigenvectors \eqref{2.4}. We first solve explicitly the equation concerning $v^\eps$:
\be
 \label{2.11}
\psi^\eps(\xi):=v^\eps_\xi(\xi) = 2
 {e^{-{\xi^2\over2 \eps^p}} \over \int_{- M}^{
     M}e^{-{x^2\over 2\eps^p}} dx}.
\ee

\begin{remark}
In the limiting case $p=+\infty$, we would formally get a Dirac mass solution
$\psi=v_\xi=2\delta_{\xi=0}$ and the transition from
$v_L=-1$ to $v_R=1$ at the interface would then be discontinous.
\end{remark}

The main equation \eqref{1.3-NEW} now reads
\be
\label{2.12b}
\bigl(-\xi I + A(u^\eps,v^\eps)\bigr)A_0(u^\eps,v^\eps)u^\eps_\xi=\eps\bigl(B(u^\eps,v^\eps)A_0(u^\eps,v^\eps)u^\eps_\xi\bigr)_\xi.
\ee
To deal with this equation, we introduce a decomposition of the vector $A_0(u^\eps,v^\eps) u^\eps_\xi(\xi)$ 
on the basis of eigenvectors $\hatr_j(u^\eps(\xi),v^\eps(\xi),\xi)$, that is, 
\be
 \label{2.12}
\aligned
 A_0(u^\eps,v^\eps)\,u^\eps_\xi(\xi) = & \sum_{j=1}^N  \, a_j^\eps(\xi) \, \hatr_j(u^\eps, v^\eps, \xi),
 \\
 a_j^\eps(\xi) = & \, \hatl_j(u^\eps, v^\eps, \xi) B(u^\eps,v^\eps)A_0(u^\eps,v^\eps) u^\eps_\xi(\xi),
 \endaligned
\ee
where the functions $a_j^\eps$ are referred to as the {\sl characteristic coefficients}. 
Removing the explicit dependence in $\eps$,
the right-hand side of \eqref{2.12b} takes the form 
$$
\aligned
 \bigl( B(u,v) \, A_0(u,v)u_\xi \bigr)_\xi 
 = &
 \sum_j  a_k' \, B(u,v) \, \hatr_j(u,v, \cdot)\\
 & +
 \sum_{j,k} a_j\,a_k \, D_u\bigl(B \, \hatr_j \bigr) (u,v,
 \cdot) \,  A_0(u,v)^{-1} \, \hatr_k(u,v, \cdot)\\
 & +
 \sum_j a_j \, \del_v \bigl(B \, \hatr_j \bigr) (u,v, \cdot) \, v_\xi
 +
 \sum_j  a_j \, B(u,v) \, \del_\xi \hatr_j  (u,v, \cdot).
\endaligned
$$
Therefore, given any solution to \eqref{1.3-NEW}, we obtain 
$$
\aligned
& \sum_j  \Big( \eps \, a_j' \, B(u,v) \, \hatr_j(u,v, \cdot)
- a_j \, \bigl( -\xi + A(u,v) \bigr) \, \hatr_j(u,v, \cdot) \Big)
\\
& = - \eps \, \sum_{j,k}  \, a_j \,a_k \,
D_u \bigl( B \, \hatr_j \bigr)(u,v, \cdot) \, A_0(u,v)^{-1} \, \hatr_k(u,v, \cdot)
-
\eps \sum_j a_j \, \del_v \bigl(B \, \hatr_j \bigr) (u,v, \cdot) \, \psi\\
& \quad - \eps \sum_j  a_j \, B(u,v) \,  \del_\xi \hatr_j(u,v, \cdot).
\endaligned
$$

Now, multiplying the above equations by each vector $\hatl_i(u,v, \cdot)$ for $i=1, \ldots, N$
and relying on the equation~\eqref{2.4}, we arrive at
 a {\sl coupled system of $N$ differential equations} 
for the characteristic coefficients $a_i$:
\begin{subequations}
 \label{2.13}
 \be
   \label{2.13a}
   a_i' - \frac{\mu_i(u,v, \cdot)}{\eps} \, a_i
   =
   \eta  L_i(u,v,\cdot)\,
   + 
   Q_i(u,v, \cdot) \,
   +
   S_i(u,v, \cdot),
 \ee
 where the linear, quadratic, and source terms are defined by 
 \be
   \label{2.13b}
   \aligned
   & L_i(u,v,\cdot)
   := \sum_j \pi_{ij}(u,v, \cdot) \, a_j,\qquad  Q_i(u,v, \cdot) 
   := \sum_{j, k} \kappa_{ijk}(u,v, \cdot) \, a_j \, a_k, \\
   & S_i(u,v, \cdot)
   := \sum_j \sigma_{ij}(u,v, \cdot) \, a_j \, \psi,
   \endaligned
 \ee
 respectively, with  
 \be
   \label{2.13c}
   \aligned
   & \pi_{ij}(u,v, \cdot) : = 
   -\eta^{-1} \, \big(\hatl_i \cdot B \,  \del_\xi \hatr_j \big) (u,v, \cdot), \\ 
   & \kappa_{ijk}(u,v, \cdot) : =
   - \big( \hatl_i \cdot D_u ( B \, \hatr_j ) \, A_0^{-1}\hatr_k \big) (u,v, \cdot),\\
   & \sigma_{ij}(u,v, \cdot) : =
   \big( \hatl_i \cdot \del_v ( B\, \hatr_j ) \big) (u,v, \cdot).
   \endaligned
 \ee
\end{subequations}

The main equation \eqref{1.3-NEW} is therefore completely equivalent to \eqref{2.12}-\eqref{2.13}. 
In view of \eqref{2.8} and \eqref{nu}, \eqref{2.13} takes the form 
\be
 \label{2.14}
  a_i' - \frac{1}{\eps} \mu_i(u,v, \cdot) \, a_i =
  O(\eta) \, \sum_j |a_j|
  + O(1) \, \sum_{j,k} |a_j| \, |a_k|
  + O(\nu) \, \sum_j |a_j| \, |\psi|.
\ee
Consider the principal part of \eqref{2.13},
that is, given some function $u=u(y)$ (which at this stage need not to
be a solution to \eqref{2.13}) let us consider the following {\sl decoupled
 system of $N$ linear equations}:
\be
\label{2.15}
\varphi_i^{\star}{}' - \frac{\mu_i(u,v, \cdot)}{\eps} \, \varphi_i^\star 
= 0, \qquad i=1,\ldots, N. 
\ee
The general solution is (up to a multiplicative constant) 
\be
\label{2.16}
\varphi_i^\star := \frac{e^{ - g_i/\eps }}
{\displaystyle \int_{-M}^{M}e^{ - g_i / \eps } \,dy}, 
\quad \qquad 
g_i(y) := -\int_{\rho_i}^y \mu_i(u,v, \cdot)(x) \, dx,
\ee
where the constants $\rho_i$ will be chosen so that the functions $g_i$
are non-negative (cf.~Section~\ref{subsec:fundamental}). Clearly, the functions $\varphi_i^\star$ are strictly positive 
and their integral over $\RR$ equals 1. 

We will search for the general solutions $a_i$ of \eqref{2.13a} in the form
\be
\label{development}
a_i = \tau_i \varphi^\star_i + \theta_i
\ee
where $\tau_i$ refers to a  wave strengh and with $\theta_i$ small w.r.t $\tau_i$.
When solving the equation \eqref{2.13a} for a given right-hand side and considering again first-order terms, we are naturally led to consider the following {\it linear wave coefficients} $J_{j\to i}$, the {\it quadratic wave coefficients} $F_{jk\to i}$ and the {\it resonant quadratic coefficients} $J_{j\to i}^\psi$ defined by
\be
 \label{eq:linear}
 J_{j\to i}(y) := \varphi_i^\star(y) \int_{c_i}^y { \varphi_j^\star(x)\over \varphi_i^\star(x)} \,\, dx,
\ee
\be
 \label{eq:quadra}
 F_{j,k\to i}(y) : = \varphi_i^\star(y) \int_{c_i}^y 
 \frac{\varphi_j^\star  \, \varphi_k^\star}{\varphi_i^\star} \, dx,
\ee
\be
 \label{eq:coupling}
   J_{j\to i}^\psi(y) := \varphi_i^\star(\xi)\int_{c_i}^\xi\psi(x)\dfrac{\varphi_j^\star(x)}{\varphi_i^\star(x)}\,\, dx
\ee
for some constants $c_i \in [\bLam_i, \Lamb_i]$ independent of $\eps$. By studying these coefficients, we will gain useful information on the possible growth of the total variation of solutions to \eqref{2.13}: roughly speaking, $J_{j\to i}$ bounds the influence of the $j$-th family on the $i$-th family, $F_{j,k\to i}$ bounds the contribution on the $i$-th family due to 
waves of the $j$-th and $k$-th characteristic families and $J_{j\to i}^\psi$ bounds the influence of the $j$-th family on the $i$-th family ``through'' the coupling wave $\psi$. 


\subsection{Linearized wave measures}
\label{subsec:fundamental}

The results of this section were established earlier in Joseph and LeFloch~\cite{JosephLeFloch99} and 
are presented for the convenience of the reader. We fix some speed range $[\lam_{\min},\lam_{max}]$ and
analyze the formula~\eqref{2.16} within this speed range. We introduce first the space of functions that 
are ``almost
linear at infinity''. Observe the coefficients 
$\mu_i$ will belong to such spaces in their respective speed ranges $[\bLam_i, \Lamb_i]$.

\begin{definition}
\label{Lspace}
A function $h:[-M,M] \mapsto \RR$ is said to belong to the
class $\mathcal{L}$ of almost linear functions 
if there exists two functions $d,\lam\in L^\infty([-M,M],\RR)$,
and two positive reals $d_{\min},d_{\max}$ such that 
$$ 
h(x)=d(x)\bigl(\lam(x)-x\bigr), \quad x\in[-M,M]
$$
and
$$
\aligned
0<d_{\min} &\leq d(x)\leq d_{\max},\\
-M<\lam_{\min} &\leq \lam(x) \leq \lam_{\max}<M.
\endaligned
$$
\end{definition}

\begin{lemma}
\label{l:rho}
Let $h:[-M,M]\mapsto\RR$ be a function of class $\mathcal{L}$ and, given $y\in[-M,M]$, set
$$
g(x)=-\int_y^x h(x') \,\,dx',\qquad x\in[-M,M]. 
$$
Then, $g$ is Lipschitz continuous and achieves its global minimum at some (non-unique)
point $\rho \in [\lam_{\min},\lam_{\max}]$ such that $\lam(\rho)~=~\rho$.
\end{lemma}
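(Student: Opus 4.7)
\emph{Plan of proof.} The plan is to extract both conclusions from a sign analysis of $h = d(\lam - x)$ on $[-M,M]$, combined with the a.e.\ identity $g'(x) = -h(x)$.

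First, I would establish the Lipschitz regularity of $g$. The estimate $|h(x)| \leq d_{\max}\bigl(M + \max(|\lam_{\min}|,|\lam_{\max}|)\bigr)$ shows that $h \in L^\infty([-M,M])$, and the definition of $g$ as a primitive of $-h$ immediately yields Lipschitz continuity with constant $\|h\|_\infty$.

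Next, I would locate the minimum by a sign argument. For $x \in [-M,\lam_{\min})$ one has $\lam(x) - x \geq \lam_{\min} - x > 0$, hence $h(x) \geq d_{\min}(\lam_{\min} - x) > 0$ and $g' = -h < 0$ a.e.; by the Lipschitz property, $g$ is strictly decreasing on $[-M,\lam_{\min}]$. Symmetrically, $g$ is strictly increasing on $[\lam_{\max},M]$. Since $g$ is continuous on the compact interval $[-M,M]$, its global minimum is attained at some $\rho \in [\lam_{\min},\lam_{\max}]$.

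Finally, I would argue that one such minimizer satisfies $\lam(\rho) = \rho$. If $\rho$ lies in the open subinterval and $g$ is differentiable at $\rho$, then the first-order condition $g'(\rho) = d(\rho)(\rho - \lam(\rho)) = 0$ together with $d(\rho) \geq d_{\min} > 0$ forces $\lam(\rho) = \rho$. If $\rho = \lam_{\min}$, the minimality combined with Lipschitz regularity gives the one-sided bound $g'(\rho^+) \geq 0$, equivalently $\lam(\lam_{\min}) \leq \lam_{\min}$, which paired with the standing hypothesis $\lam \geq \lam_{\min}$ yields equality; the endpoint $\rho = \lam_{\max}$ is handled symmetrically. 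The main subtlety I anticipate is that $\lam$ is only assumed bounded measurable, so the pointwise equality $\lam(\rho) = \rho$ is sensitive to the choice of representative; in the intended application $\lam$ arises as a continuous eigenvalue coefficient, so this issue does not bite, and the non-uniqueness clause in the statement is precisely what absorbs the freedom to select $\rho$ inside the fixed-point set $\{x : \lam(x) = x\}$.
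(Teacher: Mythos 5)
Your argument is correct and follows essentially the same route as the paper: the sign analysis of $h=d\,(\lam-x)$ shows $g$ is decreasing on $[-M,\lam_{\min}]$ and increasing on $[\lam_{\max},M]$, so the global minimum of the continuous (Lipschitz) $g$ is attained at some $\rho\in[\lam_{\min},\lam_{\max}]$, and the first-order condition there together with $d\geq d_{\min}>0$ forces $\lam(\rho)=\rho$. Your closing caveat about $d,\lam$ being merely $L^\infty$ is fair but does not put you behind the paper, whose proof likewise writes $h(\rho)=-g'(\rho)=0$ and thus tacitly assumes $g$ is differentiable at $\rho$ with derivative $-h(\rho)$ (true in the intended application, where $h=\mu_i(u^\eps,v^\eps,\cdot)$ is continuous and $\rho$ is interior to $[-M,M]$, so even your endpoint case split at $\lam_{\min},\lam_{\max}$ is not actually needed).
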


\begin{proof} Using definition~\ref{Lspace} of the class $\mathcal{L}$ we get
$$
\aligned
&h(x)>0,\qquad x<\lam_{\min},\\
&h(x)<0,\qquad x>\lam_{\max},
\endaligned
$$
$g$ being continuous, decreasing on $[-M,\lam_{\min}]$ and increasing
on $[\lam_{\max},M]$ (since  $g'=-h$), we deduce $g$ achieves its global minimum at
some (non-unique) point $\rho\in[\lam_{\min},\lam_{\max}]$.
Moreover $h(\rho)=-g'=0$, that means $\lam(\rho)=\rho$.
\end{proof}

We now introduce a new notation suggested by the formula \eqref{2.16} and derive 
useful algebraic properties. For $x,y \in [-M,M]$ and $h\in L^1([-M,M],\RR)$, we set 
$$
\aligned
\phi(y,x; h) &:= \exp\left( \frac{1}{ \eps}  \int_y^x h(x') \, dx' \right),\\
I(y; h) &:= \int_{-M}^M\phi(y,x'; h)\,\,dx',\qquad 
\varphi(x; h) := \frac{\phi(y,x; h)}{I(y; h)}.
\endaligned
$$
That is, $x\mapsto\varphi(x;h)$ is a solution (with unit mass) to the differential equation
$$
\varphi'- \frac{h}{\eps}\varphi = 0.
$$

\begin{lemma}[Algebraic properties of the mapping $\phi$]
 \label{l:algebraic}
 For general functions $h,\tilde h$ in $L^1([-M,M],\RR)$ and $x,y,z \in [-M,M]$
the following algebraic properties hold: 
 $$
   \aligned
   &\frac{\phi(x,y; h) }{ \phi(x,y; \tilde h)} = \phi(x,y; h - \tilde h),\\
   &\frac{\phi(x,y; h) }{ \phi(z,y; h)} = \frac{\phi(y,z; h) }{ \phi(y,x; h)}
   = \phi(x,z; h),\\
   &\phi(x,y; h) = \phi(y,x; -h) =  \frac{1 }{ \phi(y,x; h)}. 
   \endaligned
 $$
\end{lemma}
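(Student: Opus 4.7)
The plan is to verify each of the three identities by a direct computation from the definition
$$\phi(y,x;h) \;=\; \exp\!\Bigl(\tfrac{1}{\eps}\int_y^x h(s)\,ds\Bigr),$$
using only the linearity of the integral in $h$, the Chasles relation $\int_a^b + \int_b^c = \int_a^c$, the reversal rule $\int_a^b = -\int_b^a$, and the elementary exponential laws. No analytic input beyond this is required; the $L^1$ hypothesis on $h,\tilde h$ is used only to guarantee that every integral in sight is finite.

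First I would dispose of the third identity, which is the cleanest: replacing $h$ by $-h$ in the definition gives $\phi(y,x;-h) = \exp(-\tfrac{1}{\eps}\int_y^x h) = 1/\phi(y,x;h)$, while reversing the endpoints yields $\int_y^x h = -\int_x^y h$, so $\phi(x,y;h) = \phi(y,x;-h)$ as well. Next I would establish the first identity by merging the two exponentials and using linearity:
$$\frac{\phi(x,y;h)}{\phi(x,y;\tilde h)} \;=\; \exp\!\Bigl(\tfrac{1}{\eps}\int_x^y \bigl(h-\tilde h\bigr)\,ds\Bigr) \;=\; \phi(x,y;\,h-\tilde h).$$
For the middle identity, the Chasles relation provides
$$\int_x^y h \;-\; \int_z^y h \;=\; \int_x^y h \;+\; \int_y^z h \;=\; \int_x^z h,$$
so $\phi(x,y;h)/\phi(z,y;h) = \phi(x,z;h)$; the second equality follows by the same computation with the order of the endpoints swapped, or equivalently by applying the already-proved symmetry $\phi(y,x;h) = 1/\phi(x,y;h)$ to both numerator and denominator.

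There is no genuine obstacle here: the whole lemma is a bookkeeping statement asserting that $\phi$ is a group homomorphism in the argument $h$ (identity~1), is antisymmetric in its two endpoints (identity~3), and satisfies a cocycle relation in those endpoints (identity~2). It is recorded at this point only to streamline later manipulations of the weights $\varphi_i^\star$ and of the kernels $J_{j\to i}$, $F_{j,k\to i}$, and $J_{j\to i}^\psi$ introduced in \eqref{eq:linear}--\eqref{eq:coupling}, where ratios of the form $\varphi_j^\star/\varphi_i^\star$ must be repeatedly simplified.
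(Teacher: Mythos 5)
Your verification is correct and is exactly the elementary computation the paper has in mind: the lemma follows directly from the definition of $\phi$ by linearity of the integral, the Chasles relation, and the exponential laws, which is why the paper states it without proof. Nothing is missing, and your reading of the argument order in $\phi(\cdot,\cdot;h)$ is consistent with the paper's definition.
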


 In view of these properties, the definition of
$\varphi(x;h)$ is checked to be independent of the variable $y$.  
When the function $h$
belongs to the class $\mathcal{L}$, by taking $y=\rho$ given by Lemma~\ref{l:rho} we
obtain a negative argument in the exponential defining $\phi$ 
and that argument 
vanishes at points where $g$ is minimized.

Now, fix $\mu_1,\mu_2$ in the class $\mathcal{L}$,
 and let 
$\varphi_1:=\varphi(\cdot;\mu_1)$ and $\varphi_2:=\varphi(\cdot;\mu_2)$ be
 solutions to the 
differential equation associated with $\mu_1$ and $\mu_2$, respectively. 
Denote by $\rho_1$ and
$\rho_2$ the minimization points of the associated functions as 
defined in Lemma~\ref{l:rho}. Moreover, fix some yet unspecified scalar 
$c \in [\lam_{\min},\lam_{\max}]$ which we take to be independent of $\eps$. We then
want to control the linear coefficient
\begin{subequations} 
 \be
   \label{3.5a}
   J_{\varphi_2 \to \varphi_1}(y)=\varphi_1(y) \, \int_c^y \frac{\varphi_2(x)}{\varphi_1(x)}\,\,dx
 \ee
 characterising the first order (linear) influence of $\varphi_2$ on $\varphi_1$.
 Using Lemma~\ref{l:algebraic} we find those two following equivalent expressions, both useful in the sequel according to the sign of $\mu_2 - \mu_1$,
 $$
   \aligned
   &J_{\varphi_2 \to
     \varphi_1}(y)=\frac{I(\rho_1;\mu_1)}{I(\rho_2;\mu_2)}\phi(\rho_2,\rho_1;\mu_2)\, \varphi_1(y)\, \int_c^y
   \phi(\rho_1,x;\mu_2-\mu_1)\,\,dx,\\
   &J_{\varphi_2 \to \varphi_1}(y)=\varphi_2(y)\, \int_c^y \phi(x,y;\mu_1-\mu_2)\,\,dx.
   \endaligned
 $$
\end{subequations}
In order to estimate those coefficients, we also need for more information on
the asymptotical behavior of appearing quantities as $\eps$ tends to
$0$. The two following lemmas will give it.

\begin{lemma} [Asymptotic behavior of $\phi$]
\label{l:asymptotic}
Let $[x,y]$ be an interval of $[-M,M]$, (with $x<y$), and let $h$ be a continuous function on $[x,y]$. If $h$ is strictly positive bounded, say we have $h(x') \geq h_{\min} >0$ on $[x,y]$, then the following integral is at most linear in $\eps$
$$
\int_x^y \phi(x',y,-h) \, dx' \leq \frac{\eps }{h_{\min}},
$$
\end{lemma}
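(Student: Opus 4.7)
The plan is to observe that the integrand is essentially the derivative of $\phi(x',y;-h)$ itself, up to the factor $h(x')/\eps$. Writing out the definition, $\phi(x',y;-h) = \exp\bigl(-\frac{1}{\eps}\int_{x'}^y h(s)\,ds\bigr)$, and setting $H(x') := \int_{x'}^y h(s)\,ds$, we have $H(y)=0$, $H(x)\geq h_{\min}(y-x)\geq 0$, and $H'(x')=-h(x')$. A direct computation gives
$$
\frac{d}{dx'}\phi(x',y;-h) \;=\; \frac{h(x')}{\eps}\,\phi(x',y;-h),
$$
so that $\phi(x',y;-h) = \frac{\eps}{h(x')}\,\frac{d}{dx'}\phi(x',y;-h)$.

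Using the hypothesis $h(x')\geq h_{\min}>0$ pointwise, I would then estimate
$$
\int_x^y \phi(x',y;-h)\,dx' \;\leq\; \frac{\eps}{h_{\min}} \int_x^y \frac{d}{dx'}\phi(x',y;-h)\,dx' \;=\; \frac{\eps}{h_{\min}}\bigl(\phi(y,y;-h)-\phi(x,y;-h)\bigr),
$$
and since $\phi(y,y;-h)=1$ and $\phi(x,y;-h)\in(0,1]$ (because $H(x)\geq 0$), the bracket is bounded by $1$, yielding the claimed bound $\eps/h_{\min}$.

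An equivalent (and perhaps cleaner) route is the substitution $u = H(x')$, for which $du = -h(x')\,dx'$; since $H$ is strictly decreasing from $H(x)$ down to $0$, this converts the integral to $\int_0^{H(x)} e^{-u/\eps}\,du/h(x'(u))$, which is immediately bounded above by $h_{\min}^{-1}\int_0^{\infty}e^{-u/\eps}\,du = \eps/h_{\min}$.

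There is no real obstacle here: the continuity of $h$ is only needed to make the change of variables (or the antiderivative computation) legitimate, and the strict positivity $h\geq h_{\min}$ is exactly what is required to control the Jacobian $1/h$. The content of the lemma is just that an exponential with rate at least $h_{\min}/\eps$ integrates to at most $\eps/h_{\min}$, and this is what both approaches make precise.
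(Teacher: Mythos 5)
Your proof is correct and in substance the same as the paper's: both are the one-line observation that an exponential decaying at rate at least $h_{\min}/\eps$ integrates to at most $\eps/h_{\min}$, with the continuity and strict positivity of $h$ used exactly as you say. The only (cosmetic) difference is where the bound $h\geq h_{\min}$ is inserted: the paper puts it inside the exponent, dominating the integrand pointwise by $e^{-h_{\min}(y-x')/\eps}$ and integrating that exactly, whereas you keep the exact exponential and bound the prefactor $1/h(x')$ after writing the integrand as $\tfrac{\eps}{h(x')}\,\tfrac{d}{dx'}\phi(x',y;-h)$ (equivalently, after the substitution $u=H(x')$); both routes are complete and yield the stated estimate.
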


\begin{lemma}[Asymptotic behavior of $I(\rho; h)$]
\label{l:mass}
 For a function $h$ in the class $\mathcal{L}$, and $\rho$ defined in Lemma \ref{l:rho},
 the integrals $I(\rho; h)$ satisfy
$$
   c \eps \leq I(\rho;h) \leq 2M.
$$
\end{lemma}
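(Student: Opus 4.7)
The key observation is that $I(\rho;h)=\int_{-M}^M e^{-g(x)/\eps}\,dx$ where, by Lemma \ref{l:rho}, $g(x)=-\int_\rho^x h(x')\,dx'$ is a non-negative Lipschitz function satisfying $g(\rho)=0$. Both bounds follow from this observation with elementary analysis.

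For the upper bound, since $g\ge 0$ on $[-M,M]$, one has $e^{-g(x)/\eps}\le 1$ pointwise, so integrating over $[-M,M]$ gives $I(\rho;h)\le 2M$.

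For the lower bound, I would use the Lipschitz estimate on $g$ to control it near its minimum point $\rho$. From the definition of the class $\mathcal{L}$, one has $\|h\|_\infty\le L:=d_{\max}\,(M+\max(|\lam_{\min}|,|\lam_{\max}|))$, so $|g(x)|=|g(x)-g(\rho)|\le L|x-\rho|$ on $[-M,M]$. Since Lemma \ref{l:rho} guarantees $\rho\in[\lam_{\min},\lam_{\max}]$ with $\lam_{\min}>-M$ and $\lam_{\max}<M$, one can fix $\delta:=\min(\lam_{\min}+M,\,M-\lam_{\max})>0$, depending only on the data, so that $[\rho-\delta,\rho+\delta]\subset[-M,M]$. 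Then
\begin{equation*}
I(\rho;h)\ge\int_{\rho-\delta}^{\rho+\delta}e^{-L|x-\rho|/\eps}\,dx
=\frac{2\eps}{L}\Bigl(1-e^{-L\delta/\eps}\Bigr).
\end{equation*}

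Restricting $\eps$ to a bounded range $\eps\le\eps_0$, the factor $1-e^{-L\delta/\eps}$ is bounded below by the positive constant $1-e^{-L\delta/\eps_0}$, so $I(\rho;h)\ge c\,\eps$ with $c:=\tfrac{2}{L}(1-e^{-L\delta/\eps_0})$ independent of $\eps$. I do not anticipate a genuine obstacle: both bounds are soft, and the only quantitative ingredients are the uniform Lipschitz bound on $g$ coming from $h\in\mathcal{L}$ and the fact that the minimizer $\rho$ is separated from the endpoints $\pm M$ by the a priori gap coming from $\lam_{\min}>-M$, $\lam_{\max}<M$.
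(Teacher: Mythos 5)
Your proof is correct and follows essentially the same route as the paper: the upper bound from $e^{-g/\eps}\le 1$ on $[-M,M]$, and the lower bound by controlling $g$ linearly near its minimizer $\rho$ and integrating the resulting exponential, yielding a constant times $\eps$. The only (harmless) difference is that you use the global Lipschitz constant of $g$ coming from the class-$\mathcal{L}$ bounds and make the constants and the restriction $\eps\le\eps_0$ explicit, whereas the paper simply invokes local Lipschitz continuity of the primitive of $h$ at $\rho$ on a small interval $|x-\rho|<\eta$.
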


\begin{proof}[Proof of Lemma~\ref{l:asymptotic}]
Let us suppose $h(x') \geq h_{\min} > 0$. Then, for all $x < x' < y$, we have 
$$
-\int_{x'}^y h(t)\, dt  \leq -h_{\min} \, (y - x'),
$$
thus
$$
\int_x^y \phi(x',y;h) \, dx' \leq \int_x^y e^{-h_{\min} (y-x')/\eps} \, dx'
\leq
\frac{\eps }{h_{\min}}\int_0^\infty e^{-x'} \, dx' \leq \frac{\eps }{h_{\min}},
$$
and the result follows.
\end{proof}

\begin{proof}[Proof of Lemma~\ref{l:mass}]
By the definition of $\rho$, the argument of the exponential
 defining $\phi$ is everywhere nonpositive so $\phi \leq 1$ and
 $I(\rho; h) \leq 2M$. Moreover at the point $\rho$, the primitive of
 $h$ is locally Lipschitz continuous, so there exists a sufficiently small $\eta$
 and a constant $c>0$ such that
 $$
   0 \leq -\int_\rho^x h(x') \,\,dx' \leq \frac{1}{c} |x-\rho|,\qquad
   |x-\rho|<\eta.
 $$
 Then
 $$
   I(\rho; h)
   \geq \int^{\eta}_{-\eta}e^{ -\frac{1}{c \eps} |x|} \,\, dx
   = 2c\eps \int_0^{\frac{\eta}{c\eps}} e^{-x'}\,\, dx' \geq c\eps.
 $$
\end{proof}

Note that in Lemma~\ref{l:asymptotic}, if $h_{\min}=0$, then the considered integral remains bounded as $\eps$ vanishes.\\

The following result Lemma~\ref{l:wavelocal} shows that, on any compact subset of the
complement set $[\lam_{\min},\lam_{\max}]^c$, the mass of the linearized wave measures 
tends to zero. In the limit, all the mass of the wave measure $\varphi(\cdot;h)$
is concentrated on the interval $[\lam_{\min},\lam_{\max}]$.

\begin{lemma}[Behavior of linearized wave measures] 
\label{l:wavelocal}
 For all $h$ in the class $\mathcal{L}$ the function $\varphi(\cdot;h)$ satisfies 
 the estimates 
$$
   0 \leq \varphi(x;h) \leq O(1/\eps) 
   \begin{cases} 
     e^{ - (x - \lam_{\min} )^2 d_{\min} / 2 \eps  },   & - M < x < \lam_{\min},\\
     1, & x \in [\lam_{\min}, \lam_{\max}],\\ 
     e^{ -(x - \lam_{\max} )^2 d_{\min} / 2 \eps   },   & \lam_{\max} < x < M.
   \end{cases}
$$
\end{lemma}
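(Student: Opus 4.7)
The plan is to express $\varphi(\cdot;h)$ explicitly using the canonical minimization point $\rho$ from Lemma~\ref{l:rho}, then separately control the numerator (a Gaussian-type decay away from $[\lam_{\min},\lam_{\max}]$) and the denominator (a uniform lower bound of order $\eps$).

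First I would choose $y=\rho$ as the base point in the definition of $\phi$ and $I$, which is legitimate since, by Lemma~\ref{l:algebraic}, $\varphi(\cdot;h)$ is independent of the choice of $y$. Setting
\[
g(x) := -\int_\rho^x h(x')\,dx',
\]
we have $\varphi(x;h) = e^{-g(x)/\eps}/I(\rho;h)$, and by construction $g(\rho)=0$ while $g\geq 0$ everywhere (Lemma~\ref{l:rho}). The denominator is handled immediately by Lemma~\ref{l:mass}, which gives $I(\rho;h)\geq c\eps$ and produces the $O(1/\eps)$ prefactor in all three regimes. The middle regime $x\in[\lam_{\min},\lam_{\max}]$ then follows at once from $g(x)\geq 0$, which yields $e^{-g(x)/\eps}\leq 1$.

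The real work is the outer regimes, and the key input is the structural factorization $h(x')=d(x')(\lam(x')-x')$ with $d\geq d_{\min}>0$ from Definition~\ref{Lspace}. For $x>\lam_{\max}$, I would split $g(x) = -\int_\rho^{\lam_{\max}} h - \int_{\lam_{\max}}^x h$. The first piece equals $g(\lam_{\max})\geq 0$ and therefore only helps. On the second piece, for $x'>\lam_{\max}$ both $d(x')\geq d_{\min}$ and $\lam(x')-x'\leq \lam_{\max}-x'<0$, so
\[
h(x')\leq d_{\min}\,(\lam_{\max}-x'),
\]
and integrating from $\lam_{\max}$ to $x$ gives $-\int_{\lam_{\max}}^x h(x')\,dx' \geq d_{\min}(x-\lam_{\max})^2/2$. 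Combining, $g(x)\geq d_{\min}(x-\lam_{\max})^2/2$, which is exactly the asserted bound. The symmetric argument on the left side uses $h(x')\geq d_{\min}(\lam_{\min}-x')>0$ for $x'<\lam_{\min}$ and the nonnegativity of $g(\lam_{\min})$ to obtain $g(x)\geq d_{\min}(x-\lam_{\min})^2/2$ when $x<\lam_{\min}$.

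There is no serious obstacle here; the only point requiring care is the sign bookkeeping in the split integrals, namely realizing that the ``middle piece'' $\int_\rho^{\lam_{\min}} h$ or $\int_\rho^{\lam_{\max}} h$ is automatically nonpositive because $\rho$ minimizes $g$, so it cannot spoil the quadratic lower bound coming from the tail. Once that observation is in place, the three cases assemble directly into the stated inequality.
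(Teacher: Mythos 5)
Your proof is correct and follows essentially the same route as the paper: take the base point $\rho$ from Lemma~\ref{l:rho}, bound the exponent below by $d_{\min}(x-\lam_{\max})^2/2$ (resp.\ $d_{\min}(x-\lam_{\min})^2/2$) outside $[\lam_{\min},\lam_{\max}]$ using the factorization $h=d(\lam-\cdot)$ and the nonnegativity of the ``middle'' contribution, and invoke Lemma~\ref{l:mass} for the $O(1/\eps)$ prefactor. No gaps; the sign bookkeeping you flag is exactly the point the paper also uses implicitly when discarding the integral from $\rho$ to $\lam_{\max}$ (or $\lam_{\min}$).
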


\begin{proof}
 For $x \geq \lam_{\max}$ we have, with $\rho$ defined through Lemma~\ref{l:rho}, and $d$ and $\lam$ given in
Definition~\ref{Lspace}  
 \begin{subequations}
   $$ 
     \aligned 
     -\int_\rho^x h(y) \,dy 
     & 
     = \int_{\lam_{\max}}^x d(y) \, \bigl(y - \lam(y)\bigr) \, dy
     +
     \int_{\rho_i}^{\lam_{\max}} d(y) \, \bigl(y - \lam(y)\bigr) \, dy
     \\ 
     & \geq \int_{\lam_{\max}}^x d(y) \, \bigl(y - \lam_{\max}\bigr) \, dy   
     \geq {(x - \lam_{\max})^2 d_{\min} \over 2 }, 
     \endaligned 
   $$
   while a similar argument for $x < \lam_{\min}$ gives  
   $$ 
     -\int_\rho^x h(y) \,dy 
     \geq \int_{\lam_{\min}}^x d(y) \, \bigl(y - \lam_{\min}\bigr) \, dy 
     \geq {(x - \lam_{\min})^2 d_{\min} \over 2 }. 
   $$
 \end{subequations}
 Finally, the desired conclusion follows from definition of $\varphi(\cdot;h)$ and from Lemma~\ref{l:mass}.
\end{proof}


\subsection{Wave coefficients}
 
We rely on the earlier 
work by Joseph and LeFloch~\cite{JosephLeFloch99}, in Lemma~\ref{lemma:linint}. 
Our new contribution is about the case of resonant wave 
coefficients treated in Lemma~\ref{l:quadint}.

\begin{lemma}[Estimates of the wave coefficients]
 \label{lemma:linint}
 The linear coefficients $J_{j \to i}$ defined in \eqref{eq:linear}  satisfy the estimate 
 \be
 \label{interactcoeff}
 \left| J_{j \to i}(y)\right|
 \leq 
 \begin{cases}
   O(\eps) \,( \varphi_i^\star(y) +  \varphi_j^\star(y)),  & i \neq j, 
   \\ 
   2 M \, \varphi_i^\star(y), & i =j,
 \end{cases}
\ee
for all $i,j = 1, \ldots, N$ and $y \in [-M,M]$.
Moreover, the quadratic wave coefficients defined in \eqref{eq:quadra} satisfy  
\be
 \label{interquadra}
 |F_{j,k\to i}(y)| \leq C \, \bigl(\varphi_i^\star(y) + \varphi_j^\star(y) +
 \varphi_k^\star(y) \bigr).
\ee
\end{lemma}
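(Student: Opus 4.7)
The strategy is to reduce both bounds to the algebraic identities of Lemma~\ref{l:algebraic}, combined with the asymptotic estimates of Lemmas~\ref{l:asymptotic}, \ref{l:mass}, and \ref{l:wavelocal}. The structural fact I exploit is that, by \eqref{2.2}, the generalized eigenvalues $\hatlam_i$ lie in disjoint, uniformly separated intervals $[\bLam_i,\Lamb_i]$, so each $\varphi_i^\star$ is a near-Dirac mass concentrated on its own characteristic interval.

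For the linear coefficients $J_{j\to i}$, the diagonal case $i=j$ is immediate: the ratio $\varphi_j^\star/\varphi_i^\star\equiv 1$, so $J_{i\to i}(y)=\varphi_i^\star(y)(y-c_i)$, which is controlled by $2M\,\varphi_i^\star(y)$ because $y,c_i\in[-M,M]$. For $i\neq j$, I would use the two equivalent representations
\[
J_{j\to i}(y)=\frac{I(\rho_i;\mu_i)}{I(\rho_j;\mu_j)}\,\phi(\rho_j,\rho_i;\mu_j)\,\varphi_i^\star(y)\int_{c_i}^y\phi(\rho_i,x;\mu_j-\mu_i)\,dx
\]
and
\[
J_{j\to i}(y)=\varphi_j^\star(y)\int_{c_i}^y\phi(x,y;\mu_i-\mu_j)\,dx
\]
displayed just after \eqref{3.5a}. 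Since $[\bLam_i,\Lamb_i]$ and $[\bLam_j,\Lamb_j]$ are uniformly separated, the function $\mu_i-\mu_j$ has a definite sign on each connected component of the complement of their union and is bounded away from zero there. On each such component, pick whichever representation makes the argument of the exponential $\phi$ negative along the integration path. Then Lemma~\ref{l:asymptotic} delivers an $O(\eps)$ factor from the inner integral; Lemma~\ref{l:mass} bounds the quotient $I(\rho_i;\mu_i)/I(\rho_j;\mu_j)$; and $\phi(\rho_j,\rho_i;\mu_j)\leq 1$ holds by the minimization property defining $\rho_j$. Combining the two representations over the two pieces yields the claimed bound $O(\eps)(\varphi_i^\star(y)+\varphi_j^\star(y))$.

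For the quadratic coefficient $F_{j,k\to i}$, I would treat the inner factor $\varphi_k^\star$ as a near-Dirac mass supported on $[\bLam_k,\Lamb_k]$. Using Lemma~\ref{l:wavelocal}, split the integration range $[c_i,y]$ into the central piece $[\bLam_k,\Lamb_k]$, where $\varphi_k^\star\leq O(1/\eps)$ on an interval of bounded length, and its complement, where $\varphi_k^\star$ has Gaussian decay of width $\sqrt{\eps}$. On the tail the exponential smallness of $\varphi_k^\star$ dominates all other factors, producing a contribution controlled by $\varphi_k^\star(y)$ (or directly by $\varphi_i^\star(y)$ when $y$ lies outside $[\bLam_k,\Lamb_k]$). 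On the central piece, the $O(1/\eps)$ pointwise bound on $\varphi_k^\star$ is compensated by the $O(\eps)$ gain coming from the linear estimate just obtained for $\varphi_i^\star\int\varphi_j^\star/\varphi_i^\star\,dx$, yielding an $O(1)$ contribution expressible as a combination of $\varphi_i^\star(y)$ and $\varphi_j^\star(y)$. I expect the main obstacle to be the bookkeeping when the three intervals $[\bLam_i,\Lamb_i]$, $[\bLam_j,\Lamb_j]$, $[\bLam_k,\Lamb_k]$ are ordered in various ways relative to $c_i$ and $y$, since each configuration forces a different splitting, a different choice of the dominant pointwise bound from Lemma~\ref{l:wavelocal}, and a different use of the algebraic identities of Lemma~\ref{l:algebraic}, with care required to ensure that the final constant $C$ is independent of $\eps$.
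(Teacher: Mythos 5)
Your treatment of the diagonal case and of the region $y>c_i$ (say $j>i$) is correct and coincides with the paper's: there the second representation has a sign-definite exponent and Lemma~\ref{l:asymptotic} gives the factor $\eps/(\bLam_j-\Lamb_i)$ times $\varphi_j^\star(y)$. The genuine gap is in the off-diagonal case on the far side of $c_i$ (e.g.\ $j>i$ and $y<c_i$). There \emph{neither} representation has a nonpositive exponent along the whole integration path: in the first representation the inner integrand $\phi(\rho_i,x;\mu_j-\mu_i)$ is exponentially \emph{large} for $x$ between $\rho_i$ and $c_i$ (and $c_i\neq\rho_i$ in general, since $c_i$ is fixed independently of $\eps$ while $\rho_i$ depends on $u$ and $\eps$), so Lemma~\ref{l:asymptotic} does not apply directly; one must factor through $c_i$, which leaves the uncontrolled factor $\phi(\rho_i,c_i;\mu_j-\mu_i)\leq e^{(\Lamb_j-\bLam_i)|c_i-\rho_i|/\eps}$. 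Moreover your bound on the prefactor is too weak: Lemma~\ref{l:mass} only gives $I(\rho_i;\mu_i)/I(\rho_j;\mu_j)=O(1/\eps)$, so even if the inner integral were $O(\eps)$, the crude estimate $\phi(\rho_j,\rho_i;\mu_j)\leq 1$ would yield at best $O(1)\,\varphi_i^\star(y)$, never $O(\eps)$. The paper's proof uses instead the quadratic decay $\phi(\rho_j,\rho_i;\mu_j)\leq e^{-(\bLam_j-\rho_i)^2/(2\eps(1+\eta))}$ together with the smallness of $\delta_0$ (which makes $|c_i-\rho_i|\leq\Lamb_i-\bLam_i$ small), producing a net factor $C\,e^{-\beta_{ij}/\eps}$ with $\beta_{ij}>0$ that absorbs both the $O(1/\eps)$ mass ratio and the exponentially large factor. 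This exponential bookkeeping, and the attendant smallness hypothesis on $\delta_0$, are exactly what your argument is missing; and the $O(\eps)$ (rather than $O(1)$) gain is not cosmetic, since it is precisely what cancels the $\|\lam_i-\lam_j\|_\infty/\eps$ term later in the proof of Lemma~\ref{l:quadint}.

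For the quadratic coefficients your Dirac-mass splitting is plausible in outline, but it is left at the level of ``bookkeeping'' and quietly relies on the $O(\eps)$ linear gain, i.e.\ on the unproved point above; note also the sub-case $F_{i,k\to i}$, where no $\eps$ gain is available from the linear estimate and one should simply use $\|\varphi_k^\star\|_{L^1}=1$. The paper avoids the whole case analysis: by $|ab|\leq (a^2+b^2)/2$ it reduces to $F_{j,j\to i}$, uses $\phi(\rho_j,x;\mu_j)\leq 1$ to obtain $F_{j,j\to i}\leq I_j^{-1}\,J_{j\to i}$, and concludes with Lemma~\ref{l:mass} ($I_j\geq c\,\eps$) and the linear estimate \eqref{interactcoeff} --- the same $O(1/\eps)$ versus $O(\eps)$ cancellation you aim for, implemented without any splitting of the integration range.
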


\begin{lemma} 
 \label{l:local}
 By choosing $\delta$ small enough, for all $i\neq j$ there exists
 positive constants $C$ and $D$ independent of $\eps$ such that
 \be
 \left\|\dfrac{\varphi_i^\star}{\varphi_j^\star}\right\|_{L^\infty([\bLam_j,\Lamb_j])}
 \leq C e^{-D/\eps}.
 \ee
\end{lemma}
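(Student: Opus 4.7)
The strategy is to write the ratio $\varphi_i^\star/\varphi_j^\star$ as a prefactor (a power of $\eps$) times a pure exponential, and then exploit the spatial separation of the minimizers $\rho_i$ and $\rho_j$ inherited from strict hyperbolicity.

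First, I would use the explicit form \eqref{2.16} to get
$$
\frac{\varphi_i^\star(y)}{\varphi_j^\star(y)} = \frac{I(\rho_j;\mu_j)}{I(\rho_i;\mu_i)}\,\exp\!\Big(\tfrac{g_j(y)-g_i(y)}{\eps}\Big),
$$
and Lemma~\ref{l:mass} controls the prefactor by $I(\rho_j;\mu_j)/I(\rho_i;\mu_i)\leq 2M/(c\eps)=O(1/\eps)$, which will be absorbed at the end.

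Second, the heart of the argument is to establish, on $[\bLam_j,\Lamb_j]$, a uniform negative upper bound $g_j(y)-g_i(y)\leq -D$ with $D>0$ independent of $\eps$. For the upper bound on $g_j$: since $\rho_j\in[\bLam_j,\Lamb_j]$ with $g_j(\rho_j)=0$ and $g_j$ is Lipschitz with constant $\|\mu_j\|_\infty$, one obtains $0\leq g_j(y)\leq \|\mu_j\|_\infty(\Lamb_j-\bLam_j)=O(\delta)$. For the lower bound on $g_i$ with $i\neq j$: by \eqref{2.1}--\eqref{2.2} and the strict hyperbolicity of $A$, the intervals $[\bLam_k,\Lamb_k]$ are pairwise disjoint, separated by a gap $\sigma>0$ that is independent of $\eps$ (say $\sigma\leq\bLam_i-\Lamb_j$ when $i>j$, and symmetrically when $i<j$). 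Repeating the quadratic estimate from the proof of Lemma~\ref{l:wavelocal} with $\mu_i\in\mathcal L$ then yields
$$
g_i(y)\geq \frac{d_{\min}}{2}\,\sigma^2, \qquad y\in[\bLam_j,\Lamb_j].
$$

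Third, choosing $\delta$ small enough that $\|\mu_j\|_\infty(\Lamb_j-\bLam_j)\leq \tfrac14 d_{\min}\sigma^2$ gives $g_j(y)-g_i(y)\leq -D$ with $D:=\tfrac14 d_{\min}\sigma^2$. Plugging back,
$$
\frac{\varphi_i^\star(y)}{\varphi_j^\star(y)}\leq \frac{2M}{c\,\eps}\,e^{-D/\eps},
$$
and since $\eps^{-1}e^{-D/\eps}\leq C\,e^{-D'/\eps}$ for any $0<D'<D$ (with $C$ depending only on $D-D'$), the claim follows with that $D'$.

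The main obstacle is the uniform lower bound on $g_i$: one must verify that the class-$\mathcal L$ structure of $\mu_i$ (with characteristic point in $[\bLam_i,\Lamb_i]$) together with the strict hyperbolicity gap $\sigma$ indeed forces a quadratic-in-distance growth that is uniform in $\eps$, $u$, and $v$; the rest is bookkeeping, in particular verifying that $\sigma$ can be bounded below independently of $\delta_0$ via \eqref{2.1} before taking $\delta$ small.
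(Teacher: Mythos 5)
Your proposal is correct and follows essentially the same route as the paper's proof: write the ratio via the explicit formula and the mass bounds $c\eps\leq I\leq 2M$ of Lemma~\ref{l:mass}, use the quadratic growth of $g_i$ across the spectral gap between $[\bLam_i,\Lamb_i]$ and $[\bLam_j,\Lamb_j]$ (exactly as in Lemma~\ref{l:wavelocal}), note that $g_j$ is only $O(\delta_0)$ on its own interval, and absorb the $\eps^{-1}$ prefactor by slightly decreasing the exponential rate. The only cosmetic difference is that you bound $g_j$ by a Lipschitz estimate while the paper uses the quadratic bound $\ell_j^2=(\Lamb_j-\bLam_j)^2$; both are small for $\delta_0$ small, so the arguments coincide in substance.
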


\begin{lemma}[Resonant quadratic coefficients]
 \label{l:quadint}
Given $\psi\in L^1$ and $i\neq j$, then one has 
 $$
   \left|J_{j\to i}^\psi(y)\right|
   \leq O(1)\,\|\psi\|_1(\varphi^\star_j(y)+\varphi^\star_i(y)),\qquad \eps>0.
 $$
\end{lemma}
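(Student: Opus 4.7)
The plan is to mimic the strategy used for the non-resonant coefficients $J_{j\to i}$ in Lemma~\ref{lemma:linint}, exploiting the algebraic manipulations of $\phi$ provided by Lemma~\ref{l:algebraic}. Writing $\varphi_i^\star(x)=e^{-g_i(x)/\eps}/I(\rho_i;\mu_i)$ and similarly for $j$, we obtain the key identity
$$
\varphi_i^\star(y)\,\frac{\varphi_j^\star(x)}{\varphi_i^\star(x)}
= \varphi_j^\star(y)\,\phi(y,x;\mu_j-\mu_i)
= \frac{e^{-g_i(y)/\eps}\, e^{(g_i(x)-g_j(x))/\eps}}{I(\rho_j;\mu_j)},
$$
to be used under one form or the other depending on the configuration. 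By strict hyperbolicity ($\hatlam_j-\hatlam_i$ bounded away from zero) together with $d_i,d_j$ close to $1$ (smallness of $\eta$), the difference $\mu_j-\mu_i$ has constant sign on $[-M,M]$; equivalently, $g_i-g_j$ is strictly monotone. We may assume $j>i$, so that $\mu_j-\mu_i>0$ and $g_i-g_j$ is increasing; the case $j<i$ is completely symmetric.

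In the favorable configuration $y\geq c_i$, the integration variable $x\in[c_i,y]$ satisfies $x\leq y$ and $\int_y^x(\mu_j-\mu_i)\,dz\leq 0$, so $\phi(y,x;\mu_j-\mu_i)\leq 1$. Pulling this pointwise bound out of the integral yields at once
$$
\left|J_{j\to i}^\psi(y)\right|
\leq \varphi_j^\star(y)\int_{c_i}^y|\psi(x)|\,dx
\leq \|\psi\|_{L^1}\,\varphi_j^\star(y).
$$

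In the unfavorable configuration $y<c_i$, the integration range is $[y,c_i]$ and the same exponent becomes non-negative, so we switch to the expression in terms of $g_i-g_j$. Since this function is increasing, its supremum on $[y,c_i]$ is attained at $x=c_i$, and
$$
\sup_{x\in[y,c_i]}\varphi_i^\star(y)\,\frac{\varphi_j^\star(x)}{\varphi_i^\star(x)}
= \varphi_i^\star(y)\,\frac{\varphi_j^\star(c_i)}{\varphi_i^\star(c_i)}.
$$
Since $c_i\in[\bLam_i,\Lamb_i]$ lies in the speed range of the $i$-th family, Lemma~\ref{l:local} with the roles of $i$ and $j$ exchanged gives $\varphi_j^\star(c_i)/\varphi_i^\star(c_i)\leq C e^{-D/\eps}$, so the supremum above is of order $O(1)\,\varphi_i^\star(y)$. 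An $L^1$--$L^\infty$ H\"older estimate against this uniform-in-$x$ bound yields
$$
\left|J_{j\to i}^\psi(y)\right|\leq O(1)\,\|\psi\|_{L^1}\,\varphi_i^\star(y).
$$

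Adding the contributions from the two configurations produces the claimed estimate. The crux of the argument lies in the unfavorable case, where the monotonicity of $g_i-g_j$ --- itself a direct consequence of the strict separation of wave speeds --- identifies the endpoint $c_i$ as the point of maximum of $\varphi_j^\star/\varphi_i^\star$ on the integration range, at which point Lemma~\ref{l:local} supplies the required exponential smallness.
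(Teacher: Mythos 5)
Your proof is correct, but it follows a genuinely different route from the paper's. You bound the kernel $\varphi^\star_i(y)\,\varphi^\star_j(x)/\varphi^\star_i(x)$ pointwise in $x$ and then apply an $L^1$--$L^\infty$ estimate against $\psi$: on the side of $c_i$ where the exponent has the right sign you use $\phi(y,x;\mu_j-\mu_i)\le 1$ to get $\|\psi\|_1\varphi^\star_j(y)$, and on the other side you use the monotonicity of $\varphi^\star_j/\varphi^\star_i$ (coming from the uniform sign of $\mu_j-\mu_i$, valid for $\eta$ small) to reduce to its value at the endpoint $c_i\in[\bLam_i,\Lamb_i]$, which Lemma~\ref{l:local} makes exponentially small; this is essentially the same kernel-estimate technique used in the proof of Lemma~\ref{lemma:linint}, now tested against an arbitrary $L^1$ weight. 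The paper instead integrates by parts against an antiderivative $\Psi$ of $\psi$ (so $\|\Psi\|_\infty\le\|\psi\|_1$), differentiates the ratio $\varphi^\star_j/\varphi^\star_i$ explicitly, and controls the resulting $O(1/\eps)$ factor by the $O(\eps)$ interaction bound of Lemma~\ref{lemma:linint} for $i\neq j$, together with Lemma~\ref{l:local} for the boundary term at $c_i$. Your argument is more elementary and self-contained: it avoids the $1/\eps$ versus $O(\eps)$ cancellation and the boundary-term bookkeeping, and it even shows the contribution from the unfavorable side is exponentially small rather than merely $O(1)$. The paper's route is more modular: it treats $J^\psi_{j\to i}$ as a perturbation of the already-estimated $J_{j\to i}$, which is precisely the mechanism reused in Remark~\ref{rem:Meth2} to recover the quadratic estimates. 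One small caveat in your write-up: when $j<i$ the favorable and unfavorable configurations swap (it is $y\le c_i$ where $\phi\le1$ holds), but since the monotone ratio still attains its maximum on the integration range at $c_i$, the symmetric argument goes through exactly as you claim.
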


\begin{proof}[Proof of Lemma~\ref{lemma:linint}]
The case $i=j$ is obvious, so we only need to consider the case $i \neq j$. 
For definiteness we suppose that $j>i$, the proof for $j<i$ being similar.\\
First, using Lemmas~\ref{l:asymptotic} and \ref{l:mass}, we get in the region $y>c_i$:   
$$
\aligned
\left| \varphi_i^\star(y)\int_{c_i}^y { \varphi_j^\star(x) \over \varphi_i^\star(x)} \,\, dx\right| 
& = \varphi_j^\star(y) \, \int_{c_i}^y \phi(x,y;\mu_i-\mu_j) \,\, dx\\
& \leq \frac{\eps}{\bLam_j-\Lamb_i} \, \varphi_j^\star(y).
\endaligned
$$
On the other hand, in the region $y <c_i$ we have 
$$
\begin{aligned}
 &\left| \varphi_i^\star(y) \int_{c_i}^y { \varphi_j^\star(x)\over \varphi_i^\star(x)} \,\, dx\right| 
  = \varphi_i^\star(y) \, \frac{I_i}{I_j} \, \phi(\rho_j,\rho_i;\mu_j)
 \, \int_y^{c_i}\phi(\rho_i,x,\mu_j-\mu_i) dx\\
  &= \varphi_i^\star(y) \, \frac{I_i}{I_j} \, \phi(\rho_j,\rho_i;\mu_j)
 \, \phi(\rho_i,c_i;\mu_j-\mu_i) \, \int_y^{c_i}\phi(c_i,x,\mu_j-\mu_i) dx.
\end{aligned}
$$
But, by Lemma~\ref{l:asymptotic} we have 
$$
\int_y^{c_i} \phi(c_i,x,\mu_j -\mu_i) dx \leq
{\eps \over \bLam_j - \Lamb_i} 
$$
and, by an easy computation,
$$
\aligned
\phi(\rho_j,\rho_i;\mu_j) &\leq 
e^{- (\bLam_j - \rho_i)^2 / (2 \eps (1+\eta)) },\\
\phi(\rho_i,c_i;\mu_j - \mu_i)
&\leq e^{(\Lamb_j - \bLam_i) \, |c_i - \rho_i|/\eps},\\
I_i/I_j & = O(1/\eps).
\endaligned
$$
Using these observations we also get for all $y < c_i$
$$
\left| \varphi_i^\star(y)\int_{c_i}^y { \varphi_j^\star(x)\over \varphi_i^\star(x)} \,
\, dx\right| \leq C  e^{-\beta_{ij}/\eps}
\varphi_i^\star(y),
$$
with
$$
\aligned
\beta_{ij} 
& = - (\Lamb_j - \bLam_i) \, |c_i - \rho_i| + \frac{(\bLam_j - \rho_i)^2}{2 (1+\eta)}\\
& \geq  - (\Lamb_j - \bLam_i) \, (\Lamb_i - \bLam_i) + \frac{1}{2(1+\eta)}(\bLam_j - \Lamb_i)^2
\endaligned
$$
When $\delta_0$ tends to 0, $(\Lamb_j - \bLam_i)$ remains bounded, $(\Lamb_i - \bLam_i)$ vanishes,
while $(\bLam_j - \Lamb_i)$ tends to $\lam_j(0,0)- \lam_i(0,0) \neq 0$, so assuming $\delta_0$
small enough, we can suppose each quantity $\beta_{ij}$ is positive. The first desired result \eqref{interactcoeff} therefore follows.\\
Using the inequality $|a \, b| \leq (a^2 + b^2)/2$ we have 
$$
|F_{j,k\to i}| \leq {1 \over 2} (|F_{j,j\to i}| + |F_{k,k\to i}|). 
$$
So, we only need to consider the coefficients of the form $F_{jj \to i}$, 
that is, 
$$
G_{i,j}(y): = {\phi(\rho_i,y; \mu_i)\over I_j^2} \left| \int_{c_i}^y 
{\phi(\rho_j,x; \mu_j)^2 \over \phi(\rho_i,x; \mu_i)} \, dx \right|. 
$$
To avoid any distinction between the cases $y>c_i$ and $y<c_i$, the
integral would be noted $\int_{[c_i,y]}$ (this is allowed by the positivity of the integrand). 
Clearly, when $j=i$ we have
$$
G_{i,i} \leq \varphi_i,
$$
since
$$
\int_{[c_i,y]} \phi(\rho_i,x;\mu_i) \, dx \leq I_i.
$$
So, we now suppose $i\neq j$, then
$$
\aligned
G_{i,j}(y) 
& = {\phi(\rho_i,y;\mu_i) \over I_j^2} \int_{[c_i,y]}
\phi(\rho_i,x ; -\mu_i) \, \phi(\rho_j,x; \mu_j)^2 \, dx
\\
& = {\phi(\rho_j,y; \mu_j) \over I_j^2} \int_{[c_i,y]}
\phi(x,y; \mu_i-\mu_j) \, \phi(\rho_j,x ; \mu_j) \, dx
\\
& \leq {\phi(\rho_j,y; \mu_j) \over I_j^2} \int_{[c_i,y]}
\phi(x,y; \mu_i-\mu_j) \, dx,
\endaligned
$$
thanks to the judicious choice of $\rho_j$ that gives
$\phi(\rho_j,x;\mu_j) \leq 1$. Note the important use of this $L^\infty$
estimate that will default in the future. Finally we have
$$
G_{i,j}(y) \leq \frac{1}{I_j} \, J_{j \to i}(y)
$$
which, together with \eqref{interactcoeff} and Lemma~\ref{l:mass} completes this proof . 
\end{proof}

\begin{proof}[Proof of Lemma~\ref{l:local}]
Fix $y\in[\bLam_j,\Lamb_j]$ and recall that
$$
\varphi_i^\star(y) \leq {C \over \eps} \exp\left({{1\over \eps}\int_{\rho_i}^y(\lam_i(t)-t)\,dt}\right).
$$
If $y \geq \rho_i$ then we find 
$$
\aligned
\int_{\rho_i}^y(\lam_i(t)-t)\,dt & \leq \int_{\rho_i}^y
(\Lamb_i-t)\, dt \leq {1 \over 2} \left((\Lamb_i-\rho_i)^2-(\Lamb_i-y)^2\right)\\
& \leq {1 \over 2} \left((\Lamb_i-\bLam_i)^2-(\Lamb_i-y)^2 \right),
\endaligned
$$
while, if $y \leq \rho_i$, 
$$
\aligned
\int_{\rho_i}^y(\lam_i(t)-t)\,dt & =
\int_y^{\rho_i}(t-\lam_i(t))\, dt \leq \int_y^{\rho_i}(t-\bLam_i)\, dt\\
& \leq {1 \over 2} \left((\bLam_i-\rho_i)^2-(\bLam_i-y)^2\right) \leq {1 \over 2} \left((\bLam_i-\Lamb_i)^2-(\bLam_i-y)^2 \right).
\endaligned
$$
If $i<j$ then for all $\eps>0$, $y
\leq \rho_i$ and 
$$
\int_{\rho_i}^y(\lam_i(t)-t)\,dt \leq {1\over 2}\left((\Lamb_i-\bLam_i)^2-(\Lamb_i-\bLam_j)^2\right),
$$
while if $j<i$ then for all $\eps>0$, $y \geq \rho_i$ and 
$$
\int_{\rho_i}^y(\lam_i(t)-t)\,dt \leq {1\over 2}\left((\Lamb_i-\bLam_i)^2-(\bLam_i-\Lamb_j)^2\right),
$$
In all cases, we thus can write
$$
\varphi_i^\star(y)\leq {C \over \eps}\exp\left({{1\over2\eps}\left(\ell_i^2-\Delta_{ij}^2\right)}\right),
$$
where $\ell_i=\Lamb_i-\bLam_i$ represents the width of the i-th wave
and $\Delta_{ij}$ the gap between two different waves $\Delta_{ij}=\min(|\Lamb_i-\bLam_j|,|\Lamb_j-\bLam_i|)$.\\
Moreover, we have
$$
{1\over\varphi_j^\star(y)} \leq 2M
\exp\left({-{1\over\eps}\int_{\rho_j}^y(\lam_j(t)-t)\, dt}\right) \leq 2M e^{ (\Lamb_j-\bLam_j)^2/\eps} \leq 2M e^{\ell_j^2 /\eps}, 
$$
so finally,
$$
{\varphi_i^\star(y)\over\varphi_j^\star(y)} \leq 2MC\eps^{-1}e^{\ell_j^2+\ell_i^2-\Delta_{ij}^2\over\eps}.
$$
Choosing $\delta_0$ small enough, we can assure the positivity of 
all quantities $\Delta_{ij}^2-\ell_j^2+\ell_i^2$, and the result follows.
The multiplicative coefficient $\eps^{-1}$ can be absorbed by reducing the exponential factor.
\end{proof}

\begin{proof}[Proof of Lemma~\ref{l:quadint}]
Using integration by part and denoting $\Psi$ an anti-derivative of $\psi$,
(with so $\|\Psi\|_\infty \leq \|\psi\|_1 < \infty$), we obtain
$$
\aligned
J_{j\to i}^\psi(y)
& =
\varphi^\star_j(y)
\left[\Psi(x){\varphi^\star_j(x)\over\varphi^\star_i(x)}\right]_{c_i}^y-\varphi^\star_i(y)\int_{c_j}^y
\Psi(x){d \over dx}\left({\varphi^\star_j(x)\over\varphi^\star_i(x)}\right)\,dx\\
& =
\varphi^\star_j(y)\Psi(y)-\varphi^\star_i(y)\Psi(c_i){\varphi^\star_j(c_i)\over\varphi^\star_i(c_i)}-\varphi^\star_i(y)\int_{c_i}^y
\Psi(x){d \over dx}\left({\varphi^\star_j(x)\over\varphi^\star_i(x)}\right)\,dx.
\endaligned
$$
The explicit formula for $\varphi^\star_k$ gives
$$
\aligned
{d \over dx}\left({\varphi^\star_j(x)\over\varphi^\star_i(x)}\right) &={d \over dx}\left({I_i\over
   I_j}\exp\Bigl({1\over\eps}\int_{\rho_j}^x(\lam_j(t)-t)\,dt-{1\over\eps}\int_{\rho_i}^x(\lam_j(t)-t)\,dt\Bigr)\right)\\ 
&= 
{1\over\eps}\left(\lam_j(x)-\lam_i(x)\right){\varphi^\star_j(x)\over\varphi^\star_i(x)}
\endaligned
$$
and, consequently,
$$
\aligned
\left|J_{j\to i}^\psi(y)\right|
\leq
\|\Psi\|_\infty\left(\varphi^\star_j(y)+\varphi^\star_i(y)\left\|\dfrac{\varphi^\star_j}{\varphi^\star_i}\right\|_{L\!{}^\infty\!([\bLam_i,\Lamb_i])}\hspace{-3em}+{\|\lam_i-\lam_j\|_\infty\over\eps}\left|\varphi^\star_i(y)\int_{c_i}^y
{\varphi^\star_j(x)\over\varphi^\star_i(x)}\,dx\right|\right).
\endaligned
$$
Thus, by Lemma~\ref{l:local} and Lemma~\ref{lemma:linint} on binary terms for different wave families, 
$$
\left|J_{j\to i}(y)\right| \leq O(\eps)\left(\varphi^\star_j(y)+\varphi^\star_i(y)\right),
$$
we get
$$
\left|J_{j\to i}^\psi(y)\right|\leq
\|\psi\|_1 \left(\varphi^\star_j(y)+\varphi^\star_i(y)C e^{-D/\eps}+O(1)(\varphi^\star_j(y)+\varphi^\star_i(y))\right).
$$
\end{proof}

\begin{remark}
\label{rem:Meth2}
The method just employed could be used to establish Lemma~\ref{lemma:linint} directly,
noting that 
$\|\varphi_k^\star\|_{L^1} =1$ and $J_{j\to i}^{\varphi_k^\star}=F_{j,k\to i}$, and
on the other hand, $\|\varphi_j^\star\|_{L^1}=1$ and
$J_{k\to i}^{\varphi_j^\star}=F_{j,k\to i}$.
We deduce 
$$
\begin{aligned}
\left|F_{j,k\to i}(y)\right|\leq
 O(1)\left(\varphi_j^\star(y)+\varphi_i^\star(y)\right),\\
\left|F_{j,k\to i}(y)\right|\leq
 O(1)\left(\varphi_k^\star(y)+\varphi_i^\star(y)\right), 
\end{aligned}
$$
and \eqref{interquadra} follows.
\end{remark}


\section{Construction of the entropy solution}
\label{construction}

\subsection{Correction vector for a given strength}

Let $C_0(\RR)$ be the space of all 
continuous functions that decay to zero as $|\xi|\to+\infty$,
 and define the following weighted sup-norm for $\theta\in[C_0(\RR)]^N$ as 
$$
\|\theta\|=\sum_{k=1}^N\sup_{\xi\in\RR}{|\theta_k(\xi)|\over\sum_{h=1}^N\varphi^\star_h(\xi)}.
$$
Thus we search for $\theta$ in the Banach space
\be
 E=\Big\{\theta=(\theta_1,\ldots,\theta_N)\in [C_0(\RR)]^N\;:\; \|\theta\|<\infty\Big\}.
\ee 
For $\delta>0$, consider the ball with radius
$\delta$  
$$
 B_\delta := \left\{\tau\in\RR^N\;:\;|\tau|\leq\delta\right\},
$$
and for $\tau\in B_\delta$ 
\be
\label{eq:F}
 \mathcal{F}:=\Big\{\theta\in E\;:\; |\theta_k(\xi)|\leq
 A(\eta|\tau|+|\tau|^2+\nu|\tau|)\sum_h\varphi^\star_h(\xi),\; k=1,\ldots,N\Big\},
\ee
where $A$ is a positive constant to be chosen 
later and $\nu := \|\sigma\|_\infty$. 

The set $\mathcal{F}$ is a closed bounded subset of $E$ in the weighted norm $\|\!\cdot\!\|$. 
The quadratic quantity $|\tau|^2$ is already present in Tzavaras~\cite{Tzavaras96} and comes 
from quadratic coefficients
 between the $\tau_i\varphi^\star_i$ waves. Note however that the presence of the coupling wave $\psi$ (of unit total mass), with its strength $\nu$, enforces the subset $\mathcal{F}$ to contain correction waves that comes from coefficients
 associated with
  $\psi$ and the $\tau_i\varphi^\star_i$, and of strength at most $\nu|\tau|$ relative to the $\varphi^\star_h$.

Now, we will define, for a given strength $\tau$ the correction $\theta(\tau;\cdot)$. Let define the map $T$ that takes $u\in\bar{\Omega}$ where
$$\bar{\Omega}:=\left\{u\in C^0([-M,M]),\|u(\cdot)-u_L\|_{\infty}\leq\varsigma\right\},
\tau\in B_\delta$$
and $\theta\in\mathcal{F}$ to the vector-valued function $T(u,\tau,\theta)$ whose 
components are given by ($k=1,\ldots,N$) 
\be
\label{eq:Ttheta}
\begin{aligned}
&T_k(u,\tau,\theta)(\xi)
\\
& =\eta\ \varphi^\star_k(\xi)\int_{c_k}^\xi{1\over\varphi^\star_k(x)}\sum_i\pi_{ik}(x)\Big(\tau_i\varphi^\star_i(x)+\theta_i(x)\Big)\,dx\\
&\phantom{=}+\varphi^\star_k(\xi)\int_{c_k}^\xi{1\over\varphi^\star_k(x)}\sum_{i,j}\kappa_{ijk}(x)\Big(\tau_i\varphi^\star_i(x)+\theta_i(x)\Big)\Big(\tau_j\varphi^\star_j(x)+\theta_j(x)\Big)\,dx\\
&\phantom{=}+\varphi^\star_k(\xi)\int_{c_k}^\xi{1\over\varphi^\star_k(x)}\sum_i\sigma_{ik}(x)\Big(\tau_i\varphi^\star_i(x)+\theta_i(x)\Big)\psi(x)\,dx.
\end{aligned}
\ee 

\begin{lemma}[Contraction property] 
\label{theta_contr}
 There exists positive constants $A$, $\eta$, $\delta_0$ and $\nu$ such that
 for $\delta<\delta_0$:
 \begin{enumerate}
 \item $T:\bar{\Omega}\times B_\delta\times\mathcal{F}\to\mathcal{F}$ is well defined.
 \item \label{eq:contr_theta} There exists $0<\alpha<1$ such that
   $$
     \big\|T(u,\tau,\theta)-T(u,\tau,\hat{\theta})\big\| \leq \alpha
     \big\|\theta-\hat{\theta}\big\|,\qquad \theta,\hat{\theta}\in \mathcal{F},
   $$
   and for any $u\in\bar{\Omega},\, \tau\in B_\delta$. Therefore
   $T(u,\tau,\cdot):\mathcal{F}\to\mathcal{F}$ is a uniform contraction.
 \item \label{eq:contr_tau} There exists a positive constant $C$, depending on $\mu$ but
   independent of $\delta$, such that
   $$
     \big\|T(u,\tau,\theta)-T(u,\hat{\tau},\theta)\big\| \leq C (\eta+\nu+\delta)
     |\tau-\hat{\tau}|,\qquad \tau,\hat{\tau}\in B_\delta
   $$
   and for any $u\in\bar{\Omega},\,\theta\in\mathcal{F}$.
 \end{enumerate}
\end{lemma}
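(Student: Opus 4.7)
The plan is to prove each of the three assertions by expanding the components $T_k$ defined in \eqref{eq:Ttheta} and estimating them termwise via the wave-coefficient bounds from Lemmas~\ref{lemma:linint} and \ref{l:quadint}. The crucial structural observations are that $|\pi_{ij}|$ is bounded (the factor $\eta^{-1}$ in its definition \eqref{2.13c} is absorbed by $|\del_\xi \hatr_j| = O(\eta)$ from \eqref{2.8}), $|\kappa_{ijk}|$ is uniformly bounded by smoothness, and $|\sigma_{ij}| \leq \nu$ by definition \eqref{nu}.

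For well-definedness (claim 1), one inserts $a_i := \tau_i \varphi_i^\star + \theta_i$ into the integrand of \eqref{eq:Ttheta}. Each integral of the form $\varphi_k^\star \int_{c_k}^\xi (\varphi_i^\star / \varphi_k^\star)\, dx$ is precisely the linear wave coefficient $J_{i \to k}$; the products $\varphi_k^\star \int_{c_k}^\xi (\varphi_i^\star \varphi_j^\star / \varphi_k^\star)\, dx$ are the quadratic coefficients $F_{i,j \to k}$; and the terms containing $\psi$ are the resonant coefficients $J_{i \to k}^\psi$. Bounding each type by Lemma~\ref{lemma:linint} and Lemma~\ref{l:quadint}, and using $|\theta_i| \leq A(\eta + \nu + \delta)|\tau|\sum_h \varphi_h^\star$ (membership in $\mathcal{F}$), one obtains $|T_k(\xi)| \leq K(\eta|\tau| + |\tau|^2 + \nu|\tau|) \sum_h \varphi_h^\star(\xi)$, with a constant $K$ depending only on the structural data and not on $A$. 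Choosing $A \geq K$ and $\delta_0$ small enough so that cross-terms involving $\theta$ remain subdominant then gives $T(u,\tau,\theta) \in \mathcal{F}$.

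For the contraction (claim 2), the difference $T(u,\tau,\theta) - T(u,\tau,\hat\theta)$ depends linearly on $\theta_i - \hat\theta_i$ in the first and third terms, and bilinearly in the quadratic term through the identity $a_i a_j - \hat a_i \hat a_j = a_i (\theta_j - \hat\theta_j) + \hat a_j (\theta_i - \hat\theta_i)$ (with $a_i = \tau_i \varphi_i^\star + \theta_i$). Each resulting integral collapses into a $J$- or $F$-type coefficient, and the prefactors are: $\eta$ in front of the linear term, $\nu$ in front of the coupling term, and $|\tau| + \|\theta\| + \|\hat\theta\| = O(\delta + \eta + \nu)$ in front of the quadratic term. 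Hence $\|T(u,\tau,\theta) - T(u,\tau,\hat\theta)\| \leq K'(\eta + \nu + \delta)\|\theta - \hat\theta\|$; picking $\eta$, $\nu$, and $\delta_0$ small enough makes $\alpha := K'(\eta + \nu + \delta_0) < 1$. For the $\tau$-Lipschitz estimate (claim 3), one similarly telescopes $\tau_i - \hat\tau_i$ and $\tau_i \tau_j - \hat\tau_i \hat\tau_j$: the linear term contributes $\eta|\tau - \hat\tau|$, the quadratic term $O(\delta)|\tau - \hat\tau|$, and the coupling term $\nu |\tau - \hat\tau|$, yielding the stated bound $C(\eta + \nu + \delta)|\tau - \hat\tau|$.

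The main obstacle is the quadratic term, where products $\theta_i \theta_j$ and $\tau_i \varphi_i^\star \theta_j$ must be bounded carefully: naively applying $|\theta_i| \leq \|\theta\| \sum_h \varphi_h^\star$ to both factors would introduce $(\sum_h \varphi_h^\star)(\sum_{h'} \varphi_{h'}^\star)$, which does not fit the template of $\mathcal{F}$. The remedy is to absorb only one factor into its sup bound and leave the other as $\varphi^\star$ inside the integral, so that the surviving product is controlled by an $F$-coefficient, producing a final bound proportional to $\sum_h \varphi_h^\star$. This bookkeeping is precisely what dictates the particular three-term structure $\eta|\tau| + |\tau|^2 + \nu|\tau|$ of the a priori bound in the definition \eqref{eq:F} of $\mathcal{F}$, and is what makes the same constant $A$ work for both the self-mapping and the contraction property.
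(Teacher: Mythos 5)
Your proposal follows essentially the same route as the paper: expand the components \eqref{eq:Ttheta} of $T$, recognize the resulting integrals as the linear, quadratic and resonant coefficients $J_{i\to k}$, $F_{i,j\to k}$, $J^\psi_{i\to k}$, bound them by Lemmas~\ref{lemma:linint} and~\ref{l:quadint}, and then close the estimates by fixing $A$ in terms of the structural constant and taking $\eta+\nu+\delta$ small (the paper takes $A=4C$ and $\eta+\nu+\delta\leq 1/4C$, and gets the contraction factor $\alpha=C(\eta+\nu+\delta)(1+A\delta)$); items (2) and (3) are handled by the same telescoping you describe.

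One caveat on your closing paragraph: the ``obstacle'' you describe for the pure $\theta_i\theta_j$ products is not actually an obstacle, and your stated remedy, read literally, would not work. Bounding \emph{both} factors by membership in $\mathcal{F}$ puts the product $\sum_{h,h'}\varphi^\star_h(x)\varphi^\star_{h'}(x)$ \emph{inside} the integral $\varphi^\star_k(\xi)\int_{c_k}^\xi(\cdot)/\varphi^\star_k(x)\,dx$, so each term is exactly a quadratic coefficient $F_{h,h'\to k}(\xi)$, which \eqref{interquadra} bounds by $C\bigl(\varphi^\star_k+\varphi^\star_h+\varphi^\star_{h'}\bigr)(\xi)$; the final bound therefore does fit the template of $\mathcal{F}$, and this double use of the weighted bound is precisely what the paper does (and what your own second paragraph implicitly does). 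By contrast, ``absorbing only one factor into its sup bound and leaving the other'' is only meaningful for the mixed terms $\tau_i\varphi^\star_i\theta_j$; for $\theta_i\theta_j$ the second factor is not a $\varphi^\star$, and replacing it by its plain $L^\infty$ norm would cost a factor $O(1/\eps)$ by Lemma~\ref{l:wavelocal} and ruin the uniformity in $\eps$. So the bookkeeping you flag is resolved by the ``naive'' estimate itself, not by avoiding it; this does not affect the validity of the main body of your argument.
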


We deduce from this lemma the following existence result of a correction $\theta(\tau;\cdot)$.

\begin{proposition}
\label{prop_correction}
 Given $u\in\bar{\Omega}, \tau\in B_\delta$, there exists a unique
 $\theta(\tau;\cdot)\in \mathcal{F}$, i.e. in the class of functions
 satisfying
 \be
   \label{eq311}
   |\theta_k(\tau;\cdot)|\leq A(\eta|\tau|+|\tau|^2+\nu|\tau|)\sum_h\varphi^\star_h,\qquad
   |\tau|\leq\delta,\; k=1,\ldots,N,
 \ee
 solution of the fixed point equation $T(u,\tau,\theta)=\theta$.
 Moreover, there exists a constant $C$ independent of $\delta$ such
 that 
 \be
   \label{eq312}
   |\theta_k(\tau;\cdot)-\theta_k(\hat{\tau};\cdot)|\leq
   C(\eta+\nu+\delta)|\tau-\hat{\tau}| \sum_h\varphi^\star_h,\qquad 
   \tau,\hat{\tau}\in B_\delta.
 \ee
\end{proposition}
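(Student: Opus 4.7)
The plan is to deduce Proposition~\ref{prop_correction} directly from Lemma~\ref{theta_contr} via a standard Banach fixed point argument. First I would observe that by construction $\mathcal{F}$ is a closed, bounded, convex subset of the Banach space $(E, \|\cdot\|)$, so it is itself a complete metric space for the induced distance. By part~(1) of Lemma~\ref{theta_contr}, for each fixed $u \in \bar{\Omega}$ and $\tau \in B_\delta$ the map $T(u,\tau,\cdot)$ sends $\mathcal{F}$ into itself, and by part~(2) it is a uniform contraction on $\mathcal{F}$ with constant $\alpha < 1$ independent of $u$ and $\tau$. Banach's fixed point theorem then yields a unique $\theta(\tau;\cdot) \in \mathcal{F}$ with $T(u,\tau,\theta(\tau;\cdot)) = \theta(\tau;\cdot)$. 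The pointwise bound \eqref{eq311} is nothing but the defining inclusion $\theta(\tau;\cdot) \in \mathcal{F}$ spelled out via \eqref{eq:F}.

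For the Lipschitz estimate \eqref{eq312}, I would use the two parts of Lemma~\ref{theta_contr} together. Writing $\theta = \theta(\tau;\cdot)$ and $\hat\theta = \theta(\hat\tau;\cdot)$ and exploiting that both satisfy the fixed point equation, decompose
\begin{equation*}
\theta - \hat\theta = T(u,\tau,\theta) - T(u,\hat\tau,\hat\theta) = \bigl(T(u,\tau,\theta) - T(u,\tau,\hat\theta)\bigr) + \bigl(T(u,\tau,\hat\theta) - T(u,\hat\tau,\hat\theta)\bigr).
\end{equation*}
Applying the weighted norm $\|\cdot\|$, using part~(2) on the first bracket and part~(3) on the second, one obtains
\begin{equation*}
\|\theta - \hat\theta\| \leq \alpha \, \|\theta - \hat\theta\| + C(\eta + \nu + \delta)\, |\tau - \hat\tau|,
\end{equation*}
and rearranging gives $\|\theta - \hat\theta\| \leq \tfrac{C}{1-\alpha}(\eta + \nu + \delta)|\tau - \hat\tau|$, which is exactly \eqref{eq312} once the definition of the weighted norm is unfolded to the pointwise bound against $\sum_h \varphi_h^\star$.

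The genuine technical difficulty has already been absorbed into Lemma~\ref{theta_contr}; the remaining work is purely abstract functional analysis. The only point demanding mild care is checking that $\mathcal{F}$ really is closed in $E$ — this follows since the defining inequality in \eqref{eq:F} is preserved under uniform limits weighted by $\sum_h \varphi^\star_h$, and since the uniform weight $\sum_h \varphi_h^\star$ decays at infinity, any limit automatically lies in $[C_0(\mathbb{R})]^N$. Once this is noted, the proposition follows in a few lines from Lemma~\ref{theta_contr}, so I would present the proof in that compact form.
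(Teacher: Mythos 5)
Your argument is correct and follows essentially the same route as the paper: Banach's fixed point theorem applied via the stability and uniform contraction parts of Lemma~\ref{theta_contr} gives existence, uniqueness, and \eqref{eq311}, and your decomposition $\theta-\hat\theta = \bigl(T(u,\tau,\theta)-T(u,\tau,\hat\theta)\bigr)+\bigl(T(u,\tau,\hat\theta)-T(u,\hat\tau,\hat\theta)\bigr)$ combined with parts (2) and (3) is exactly the paper's derivation of \eqref{eq312}, with the constant $C/(1-\alpha)$ relabeled. Your extra remark verifying closedness of $\mathcal{F}$ in the weighted norm is a sensible (if implicit in the paper) precaution, and nothing is missing.
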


\begin{proof}[Proof of Lemma~\ref{theta_contr}]
The main difficulty is handling the coupling wave $\nu\psi$.
First of all, we show $T$ keeps the subset $\mathcal{F}$ stable, using the definitions \eqref{eq:F}, \eqref{eq:Ttheta}, and the different definitions of the coefficients \eqref{eq:linear}, \eqref{eq:quadra} and \eqref{eq:coupling}, we get
$$
\begin{aligned}
& \left|T_k(u,\tau,\theta)(\xi)\right|
\\
& \leq \eta\|\pi\|_\infty \sum_i J_{i\to k}(\xi)\Big(|\tau|+A\left(\eta|\tau|+|\tau|^2+\nu|\tau|\right)
\Big)\\
& \phantom{\leq} + \|\kappa\|_\infty\sum_{ij}F_{ij\to k}(\xi)\Big(|\tau|^2+2|\tau|A\left(\eta|\tau|+|\tau|^2+\nu|\tau|\right) 
+A^2\left(\eta|\tau|+|\tau|^2+\nu|\tau|\right)^2 
\Big)\\
& \phantom{\leq} + \|\sigma\|_\infty\sum_i J_{i\to k}^\psi(\xi)\Big(|\tau|+A\left(\eta|\tau|+|\tau|^2+\nu|\tau|\right)
\Big).
\end{aligned}
$$
Thus, by using Lemmas~\ref{lemma:linint} and~\ref{l:quadint} we get
$$
\begin{aligned}
& \left|T_k(u,\tau,\theta)(\xi)\right|\\
& \leq \eta\|\pi\|_\infty\ C_1 N^2\left(|\tau|+A\left(\eta|\tau|+|\tau|^2+\nu|\tau|\right)\right)\sum_h\varphi^\star_h(\xi)\\
& \phantom{\leq} + \|\kappa\|_\infty\ C_2 N^4\Big(|\tau|^2 + 2A|\tau|\left(\eta|\tau|+|\tau|^2+\nu|\tau|\right)
 + A^2\left(\eta|\tau|+|\tau|^2+\nu|\tau|\right)^2\Big)\sum_h\varphi^\star_h(\xi)\\
& \phantom{\leq} + \|\sigma\|_\infty\ C_3 N^2 \left(|\tau|+A\left(\eta|\tau|+|\tau|^2+\nu|\tau|\right)\right)\sum_h\varphi^\star_h(\xi)
\\
& \leq C \left(1+A\left(\eta+\nu+\delta\right)\right)^2(\eta|\tau|+|\tau|^2+\nu|\tau|)\sum_h\varphi^\star_h(\xi),
\end{aligned}
$$
where $C$ is a constant depending only on $N$ the dimension of the
space, on $\|\pi\|_\infty$, $\|\kappa\|_\infty$ and of the constants
$C_1,C_2,C_3$. A necessary condition to get the stability of the subset $\mathcal{F}$
by $T(u,\tau,\cdot)$ is also
$$
C(1+A(\eta+\nu+\delta))^2\leq A
$$
A way to get this inequality is for example, fixing $A=4C$, to choose $\eta$, $\nu$ and
$\delta$ together such that 
$
\eta+\nu+\delta \leq 1/4C.
$

\

Now, 
 $T$ is an uniform contraction relative to the variable $\theta\in\mathcal{F}$, 
 since (from similar arguments) 
$$
\begin{aligned}
& |T_k(u,\tau,\theta)(\xi)-T_k(u,\tau,\hat{\theta})(\xi)| 
\\
& \leq \eta\|\pi\|_{\infty} \sum_j \|\theta-\hat{\theta}\| \sum_i J_{i\to k}(\xi)\\
& \phantom{\leq} + \|\kappa\|_\infty \sum_{ij} \Bigl( 2|\tau|\, \|\theta-\hat{\theta}\| \sum_l F_{il}^k(\xi)
+ 2A\left(\eta|\tau|+|\tau|^2+\nu|\tau|\right) \|\theta-\hat{\theta}\| \sum_{lm}F_{lm\to k}(\xi)\Bigr)\\
& \phantom{\leq} + \|\sigma\|_{\infty} \sum_j \|\theta-\hat{\theta}\| \sum_i J_{i\to k}^\psi(\xi).
\end{aligned}
$$
In view of Lemmas~\ref{lemma:linint} and~\ref{l:quadint} we obtain 
$$
\begin{aligned}
& |T_k(u,\tau,\theta)(\xi)-T_k(u,\tau,\hat{\theta})(\xi)|
\\
& \leq \eta\|\pi\|_{\infty} \ C_1 N^2\|\theta-\hat{\theta}\| \sum_h\varphi^\star_h(\xi)\\
& \phantom{\leq} + \|\kappa\|_\infty\  C_2 N^4 \left(|\tau| + A\left(\eta|\tau|+|\tau|^2+\nu|\tau|\right)\right) \|\theta-\hat{\theta}\| \sum_h\varphi^\star_h(\xi)\\
& \phantom{\leq} + \|\sigma\|_\infty\ C_3 N^2 \|\theta-\hat{\theta}\|\sum_h \varphi^\star_h(\xi)\\
& \leq C(\eta+\nu+\delta)(1+A\delta)\|\theta-\hat{\theta}\|\sum_h\varphi^\star_h(\xi).
\end{aligned}
$$
Finally, we obtain the item (\ref{eq:contr_theta}) of Proposition~\ref{theta_contr} with
$\alpha=C(\eta+\nu+\delta)(1+A\delta)$ by choosing
$\eta+\nu+\delta$ sufficiently small to assure that $\alpha < 1$.

\

Finally,  we have to check the Lipschitz continuity of $T$ in the variable $\tau$.
$$
\begin{aligned}
& |T_k(u,\tau,\theta)(\xi)-T_k(u,\hat{\tau},\theta)(\xi)|\\
& \leq \eta\|\pi\|_\infty |\tau-\hat{\tau}|\sum_i J_{i\to k}(\xi)\\
& \phantom{\leq} + \|\kappa\|_\infty \sum_{ij} \Big(
|\tau_i\hat{\tau}_j-\tau_j\hat{\tau}_i|\, F_{ij\to k}(\xi)
+2|\tau-\hat{\tau}|\,A\delta(\eta+\delta+\nu)\sum_l F_{il\to k}(\xi)\Big)\\
& \phantom{\leq} + \|\sigma\|_\infty |\tau-\hat{\tau}|\sum_i J_{i\to k}^\psi(\xi).
\end{aligned}
$$
By using Lemmas~\ref{lemma:linint} and~\ref{l:quadint} we get
$$
 \begin{aligned}
& |T_k(u,\tau,\theta)(\xi)-T_k(u,\hat{\tau},\theta)(\xi)|
\\
& \leq \eta\|\pi\|_\infty C_1 N |\tau-\hat{\tau}|\sum_h\varphi^\star_h(\xi)
  + \|\kappa\|_\infty C_2 N^2\big( 2\delta |\tau-\hat{\tau}|
  \\
  & \quad +2AN\delta|\tau-\hat{\tau}|\big) \sum_h\varphi^\star_h(\xi)
   + \|\sigma\|_\infty C_3 N |\tau-\hat{\tau}|\sum_h\varphi^\star_h(\xi)\\
& \leq \displaystyle C(\eta+\nu+\delta)|\tau-\hat{\tau}|\sum_h\varphi^\star_h(\xi),
\end{aligned}
$$
and the item (\ref{eq:contr_tau}) of Proposition~\ref{theta_contr} follows. 
\end{proof}

\begin{proof}[Proof of Proposition~\ref{prop_correction}]
 The inequality \eqref{eq311} is a direct consequence of the contraction
 mapping theorem, that previous proposition ensures to apply
 ($\alpha<1$). Let $u\in\bar\Omega$ be given, and $\tau,\hat{\tau}\in B_\delta$, then
 $$
   \begin{aligned}
     \theta(\tau)-\theta(\hat{\tau})
     & = T(u,\tau,\theta(\tau)) - T(u,\hat{\tau},\theta(\hat{\tau}))\\
     & = \big(T(u,\tau,\theta(\tau)) - T(u,\tau,\theta(\hat{\tau}))\big) +
     \big(T(u,\tau,\theta(\hat{\tau})) -  T(u,\hat{\tau},\theta(\hat{\tau}))\big), 
       \end{aligned}
 $$ 
     thus
      $$
   \begin{aligned}
     \|\theta(\tau)-\theta(\hat{\tau})\|
     & \leq \big\|T(u,\tau,\theta(\tau)) - T(u,\tau,\theta(\hat{\tau}))\big\| +
     \big\|T(u,\tau,\theta(\hat{\tau})) - T(u,\hat{\tau},\theta(\hat{\tau}))\big\|\\
     & \leq \alpha \big\|\theta(\tau)-\theta(\hat{\tau})\big\| + C(\eta+\nu+\delta)|\tau-\hat{\tau}|.
   \end{aligned}
 $$
 Hence,
 $$
   \|\theta(\tau)-\theta(\hat{\tau})\|\leq {C\over 1-\alpha}(\eta+\nu+\delta)|\tau-\hat{\tau}|,
 $$
 and \eqref{eq312} ensues.
\end{proof}


\subsection{Strength vector for given Riemann data}

Fix a left-state vector $u_L\in \RN$ and $u\in \bar{\Omega}=\left\{u\in     C^0([-M,M]),\|u(\cdot)-u_L\|_{\infty}\leq\varsigma\right\}$. Being given $\tau\in B_\delta$, we previously constructed a unique $\theta(\tau,\cdot)\in \mathcal{F}$ such that $T(u,\tau,\theta)=\theta$. The question is now to link the vector $\tau\in B_\delta$ to the boundary data $u_L,u_R$. Consider the following operator:
$$ 
S(\tau): =u_L+A_0(u,v)^{-1}\sum_k\int_{-M}^{M}\left[\tau_k\varphi^\star_k(\xi)+\theta_k(\tau,\xi)\right]\hatr_k(u(\xi),v(\xi),\xi)d\xi.
$$

\begin{lemma}
\label{lemma_contraction}
There exist constants $\delta, r>0$ such that the operator 
$P:B_r(u_L)\times\bar{\Omega}\times B_\delta\to B_\delta$ defined by 
$$
 P(u_R,u,\tau)=A_0(u,v)(u_R-u_L)-\sum_k
 \int_{-M}^M\theta_k(\tau,\xi)\hatr_k(u(\xi),v(\xi),\xi)\,d\xi.
$$
satisfies the contraction property (for some $0<\alpha<1$)
$$
\left|P(u_R,u,\tau)-P(u_R,u,\hat{\tau})\right|\leq
\alpha |\tau - \hat{\tau}|,\qquad \tau,\hat{\tau}\in B_\delta
$$
for any $u_R\in B_r(u_L), u\in\bar{\Omega}$. 
\end{lemma}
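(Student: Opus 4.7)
The plan is to exploit the cancellation in the difference $P(u_R,u,\tau)-P(u_R,u,\hat\tau)$: the leading term $A_0(u,v)(u_R-u_L)$ depends on $u_R$ and $u$ but not on $\tau$, so it drops out entirely. The estimate thereby reduces to controlling
\[
\Bigl|\sum_{k}\int_{-M}^{M}\bigl(\theta_k(\tau,\xi)-\theta_k(\hat\tau,\xi)\bigr)\,\hatr_k(u(\xi),v(\xi),\xi)\,d\xi\Bigr|,
\]
which is precisely what the Lipschitz estimate \eqref{eq312} of Proposition~\ref{prop_correction} is designed to handle.

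The steps I would carry out, in order, are: (i) form the difference and observe the cancellation of the affine part; (ii) insert the pointwise bound $|\theta_k(\tau,\xi)-\theta_k(\hat\tau,\xi)|\leq C(\eta+\nu+\delta)|\tau-\hat\tau|\sum_{h}\varphi^\star_h(\xi)$ from \eqref{eq312}; (iii) use the normalization $\hatr_i\cdot\hatr_i=1$ imposed after \eqref{2.4} to bound $|\hatr_k|\leq 1$ uniformly; (iv) apply the unit-mass property $\int_{-M}^{M}\varphi^\star_h(\xi)\,d\xi=1$ built into \eqref{2.16}; and (v) conclude
\[
|P(u_R,u,\tau)-P(u_R,u,\hat\tau)|\leq C'(\eta+\nu+\delta)\,|\tau-\hat\tau|,
\]
with $C'$ independent of $\eps$ and $\delta$, then fix $\eta,\nu,\delta_0$ so that $\alpha:=C'(\eta+\nu+\delta_0)<1$. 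In parallel I would verify the implicit self-map statement $P(B_r(u_L)\times\bar\Omega\times B_\delta)\subset B_\delta$: using $|A_0(u,v)(u_R-u_L)|\leq\|A_0\|_\infty\,r$ for the linear term and the size bound \eqref{eq311} on $\theta$ for the integral term, one obtains $|P|\leq \|A_0\|_\infty r + AN^2(\eta+\nu+\delta)\delta$; choosing $r\leq\delta/(2\|A_0\|_\infty)$ together with a further shrinking of $\eta+\nu+\delta_0$ then yields $|P|\leq\delta$.

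There is no substantial obstacle: all of the analytic work — the Lipschitz dependence of $\theta$ on $\tau$ in the weighted sup norm, and the size estimate \eqref{eq311} — has already been paid for in Proposition~\ref{prop_correction}, whose proof relied on the wave-coefficient estimates of Lemmas~\ref{lemma:linint} and~\ref{l:quadint}. The only delicate point here is the bookkeeping of small constants: one must fix $A$ first (as in the proof of Lemma~\ref{theta_contr}), then select $\delta_0,\eta,\nu$ so that both the contraction factor $\alpha$ and the self-map condition hold simultaneously, and finally shrink $r$ relative to $\delta$ to absorb the linear piece $A_0(u,v)(u_R-u_L)$ into the ball $B_\delta$.
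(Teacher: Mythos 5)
Your proposal is correct and follows essentially the same route as the paper: the affine term cancels in the difference, the Lipschitz estimate \eqref{eq312} on $\theta(\tau,\cdot)$ combined with the bound on the eigenvectors $\hatr_k$ and the unit mass of the $\varphi^\star_h$ gives $\alpha = O(1)(\eta+\nu+\delta)$, and the self-map property is checked via \eqref{eq311} and the choice $r\leq \delta/(2\|A_0\|)$, exactly as in the paper's proof.
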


\begin{proposition}
\label{prop_strenght}
Given $u\in \bar{\Omega}$, there exist positive constants $r$ and $\delta$ such that the following holds: 
\begin{enumerate}

\item For all $u_R\in B_r(u_L)$ there exists a unique solution of the equation $S(\tau)=u_R$ with $\tau\in B_\delta$.

\item For each $u\in \bar{\Omega}$ and $\eps>0$ the inverse map $S^{-1}:B_r(u_L)\to B_\delta$ is well defined and satisfies
\be
\label{eq:inversemap}
|S^{-1}(u_R)|\leq \gamma|u_R-u_L|,
\ee
where $\gamma$ is a constant depending on $\varsigma$ but independent of $u\in \bar{\Omega}$ and $\eps$.
\end{enumerate}
\end{proposition}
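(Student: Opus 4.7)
The plan is to apply the Banach fixed-point theorem to the operator $P$ of Lemma~\ref{lemma_contraction} and to identify its unique fixed point as the inverse $S^{-1}(u_R)$.

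For part (1), I first check that, for $r > 0$ small enough, the map $P(u_R, u, \cdot)$ sends $B_\delta$ into itself. Using \eqref{eq311} together with $\|\varphi_h^\star\|_{L^1} = 1$, the nonlinear term in $P$ is controlled by $C N (\eta + \delta + \nu)\delta$, while the linear term is bounded by $\|A_0\|_\infty\, r$. Choosing the smallness parameters so that the former is $\leq \delta/2$ and $r \leq \delta/(2\|A_0\|_\infty)$, we obtain $|P(u_R, u, \tau)| \leq \delta$ for every $\tau \in B_\delta$. Combined with the contraction property in Lemma~\ref{lemma_contraction} (ratio $\alpha < 1$), Banach's fixed-point theorem produces a unique $\tau \in B_\delta$ with $P(u_R, u, \tau) = \tau$. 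By construction of $P$ and $S$, rearranging this identity and using the concentration of $\varphi_k^\star$ about $\hatlam_k$ to identify $\int \varphi_k^\star \hatr_k\, d\xi$ with the $k$-th wave component shows that the fixed-point equation for $P$ is equivalent to $S(\tau) = u_R$; we then set $S^{-1}(u_R) := \tau$.

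For part (2), the Lipschitz estimate \eqref{eq:inversemap} exploits the special value $u_R = u_L$. Every term in \eqref{eq:Ttheta} vanishes when $\tau = 0$ and $\theta = 0$, so the uniqueness part of Proposition~\ref{prop_correction} forces $\theta(0, \cdot) \equiv 0$; hence $P(u_L, u, 0) = 0$ and $\tau = 0$ is the unique fixed point corresponding to $u_R = u_L$. For general $u_R \in B_r(u_L)$, I compare $\tau = P(u_R, u, \tau)$ with $0 = P(u_L, u, 0)$:
\begin{align*}
|\tau|
&\leq |P(u_R, u, \tau) - P(u_R, u, 0)| + |P(u_R, u, 0) - P(u_L, u, 0)| \\
&\leq \alpha\, |\tau| + \|A_0\|_\infty\, |u_R - u_L|,
\end{align*}
where the last step uses $\theta(0, \cdot) \equiv 0$ to cancel the $\theta$-contributions. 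Rearranging yields $|\tau| \leq \gamma\, |u_R - u_L|$ with $\gamma := \|A_0\|_\infty / (1 - \alpha)$, which depends on $\varsigma$ through the bound on $A_0$ over $\bar\Omega$ but is independent of $\eps$ and of the particular $u \in \bar\Omega$.

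The principal obstacle is keeping the various smallness parameters consistent: the radii $r, \delta$ and the coefficients $\eta, \nu$ enter both the construction of $\theta(\tau, \cdot)$ in Proposition~\ref{prop_correction} and the self-mapping plus contraction statements for $P$, and a single coherent choice must make every estimate work simultaneously, avoiding any circular dependency between $\delta$ and $r$.
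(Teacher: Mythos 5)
There is a genuine gap at the point where you declare the fixed-point equation for $P$ to be equivalent to $S(\tau)=u_R$. Writing out $S(\tau)=u_R$ gives
\[
A_0(u,v)(u_R-u_L)\;=\;\sum_k \tau_k\int_{-M}^{M}\varphi^\star_k(\xi)\,\hatr_k(u(\xi),v(\xi),\xi)\,d\xi
\;+\;\sum_k\int_{-M}^{M}\theta_k(\tau,\xi)\,\hatr_k(u(\xi),v(\xi),\xi)\,d\xi
\;=\;C(u,v)\,\tau+\sum_k\int_{-M}^{M}\theta_k(\tau,\xi)\,\hatr_k\,d\xi,
\]
where $C(u,v)$ is the $N\times N$ matrix whose $k$-th column is $\int_{-M}^{M}\varphi^\star_k\hatr_k\,d\xi$. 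Hence $S(\tau)=u_R$ is equivalent to $C(u,v)\,\tau=P(u_R,u,\tau)$, \emph{not} to $\tau=P(u_R,u,\tau)$. The concentration of $\varphi^\star_k$ only shows that the $k$-th column of $C(u,v)$ is close to the unit vector $\hatr_k$, so $C(u,v)$ is close to the matrix of right eigenvectors, not to the identity; a fixed point of $P$ alone therefore does not solve $S(\tau)=u_R$, and your appeal to ``identifying $\int\varphi^\star_k\hatr_k\,d\xi$ with the $k$-th wave component'' does not close this. The paper instead runs the contraction argument on the composed map $\tau\mapsto C(u,v)^{-1}P(u_R,u,\tau)$ and invokes the essential fact (taken from Tzavaras) that $C(u,v)$ is invertible with $|C(u,v)^{-1}|\leq\beta$ uniformly in $u\in\bar{\Omega}$ and $\eps$; this uniform invertibility is the missing ingredient, and it is exactly what makes the constant $\gamma$ in \eqref{eq:inversemap} independent of $\eps$ and of $u$.

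Once the map is corrected, the rest of your scheme survives with minor changes: the self-mapping and contraction estimates pick up the factor $\beta$, and in part (2) your comparison with the case $u_R=u_L$ (where indeed $\theta(0,\cdot)\equiv 0$ by \eqref{eq311}) yields $|\tau|\leq\alpha'|\tau|+\beta\,\|A_0\|\,|u_R-u_L|$ and hence $\gamma=\beta\|A_0\|/(1-\alpha')$; the paper reaches the same kind of bound, $|\tau|\leq 2\|A_0\|\beta\,|u_R-u_L|$, by estimating the $\theta$-term directly through \eqref{eq311} and absorbing a $\tfrac12|\tau|$ term. So your treatment of part (2) is a legitimate variant of the paper's estimate, but as written both parts rest on the wrong fixed-point equation and omit the matrix $C(u,v)$ and the bound $|C(u,v)^{-1}|\leq\beta$.
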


\begin{proof}[Proof of Lemma~\ref{lemma_contraction}]
Letting $u_R\in B_r(u_L)$, $u\in\bar{\Omega}$ and $\tau\in B_\delta$, one has 
$$
\begin{aligned}
|P(u_R,u,\cdot)|&\leq \|A_0\||u_R-u_L|+ N^2 R
A(\eta|\tau|+|\tau|^2+\nu|\tau|)\\
&\leq \left( \|A_0\|r + R N^2 A (\eta\delta+\delta^2 +\nu\delta)\right). 
\end{aligned}
$$
Hence, one has the inclusion $P(u_R,u,B_\delta)\subset B_\delta$ provided 
$$
\left( \|A_0\|r + R N^2 A (\eta\delta+\delta^2 +\nu\delta)\right)\leq \delta, 
$$
that is by choosing $r$, $\eta$, $\delta$ and $\nu$ such that
$$
R N^2 A (\eta+\delta+\nu)\delta \leq \delta /2,
$$
$$
\|A_0\|\beta r \leq \delta /2, 
$$
that is to say
$$
\eta+\delta+\nu \leq 1/2R N^2 A,
$$
$$
r \leq \delta / 2 \|A_0\|.
$$
Given $\tau$ and $\hat{\tau}$ in $B_\delta$ we have 
$$
P(u_R,u,\tau)-P(u_R,u,\hat{\tau})=\sum_k \int_{-M}^M\!\!\!\!\!\!\left[\theta_k(\xi,\tau)-\theta_k(\xi,\hat{\tau})\right]\hatr_k(u(\xi),v(\xi),\xi)\,d\xi,
$$
$$
\begin{aligned}
\left|P(u_R,u,\tau)-P(u_R,u,\hat{\tau})\right|
& \leq \sum_k
\int_{-M}^{M}\left|\theta_k(\xi,\tau)-\theta_k(\xi,\hat{\tau})\right|\,|\hatr_k(u(\xi),v(\xi),\xi)|\,d\xi\\
& \leq R N C (\eta+\delta+\nu)|\tau-\hat{\tau}|\sum_k
\int_{-M}^{M}\varphi^\star_k(\xi)\,d\xi\\
& \leq N^2 C (\eta+\delta+\nu)|\tau-\hat{\tau}|.
\end{aligned}
$$
Provided 
\be
\alpha := N^2 C (\eta+\delta+\nu) < 1,
\ee
the map $P(u_R,u,\cdot)$ is a uniform contraction on $B_\delta$.
\end{proof}

\begin{proof}[Proof of Proposition~\ref{prop_strenght}]
Let $u_L$ be fixed. The equation $S(\tau)=u_R$ takes the form
$$
\begin{aligned}
&A_0(u,v)(u_R-u_L)\\
&= \sum_k\tau_k \int_{-M}^{M}\varphi^\star_k(\xi)\hatr_k(u(\xi),v(\xi),\xi)d\xi + \sum_k \int_{-M}^{M}\theta_k(\tau,\xi)\hatr_k(u(\xi),v(\xi),\xi)d\xi,
\end{aligned}
$$
in other words $\tau$ solves the equation
\be
\label{eq:tau}
A_0(u,v)(u_R-u_L)=C(u,v)\ \tau +  \sum_k \int_{-M}^{M}\theta_k(\tau,\xi)\hatr_k(u(\xi),v(\xi),\xi)d\xi,
\ee
where $C(u,v)$ is the matrix whose $k$-th columns is given by
$$
\int_{-M}^{M}\varphi^\star_k(\xi)\hatr_k(u(\xi),v(\xi),\xi)d\xi,\qquad k=1,\ldots,N.
$$
This matrix has the important property it is invertible for any $u\in\bar{\Omega}$ and the inverse matrix $C(u,v)^{-1}$ is uniformly bounded (cf~\cite{Tzavaras96})
\be
|C(u,v)^{-1}|\leq \beta, \qquad u\in\bar{\Omega}.
\ee
In order to solve the equation $S(\tau)=u_R$, observe that solutions of \eqref{eq:tau} are also fixed points of the map $\tau\mapsto C(u,v)^{-1}P(u_R,u,\tau)$, whose existence are ensured by Lemma~\ref{lemma_contraction}.
As a consequence, given $u_R\in B_r(u_L)$, there exists a unique fixed point $\tau$ of
$P(u_R,u,\cdot)$ in the ball $B_\delta$. Moreover it also satisfies
$$
\begin{aligned}
|\tau| &\leq
\left|A_0(u,v)C(u,v)^{-1}\right||u_R-u_L|
\\
& \quad +\left|C(u,v)^{-1}\right|\sum_k \int_{-M}^{M}\!\!\!\!\!\!\!|\theta_k(\xi,\tau)|\,|\hatr_k(u(\xi),v(\xi),\xi)|d\xi, 
\end{aligned}
$$
thus 
$$
\begin{aligned}
|\tau| 
& \leq \|A_0\|\beta |u_R-u_L|  + \beta R A N^2 (\eta|\tau|+|\tau|^2+\nu|\tau|)\\
&\leq  \|A_0\|\beta |u_R-u_L| + 1/2 |\tau|.
\end{aligned}
$$
Thus, $|\tau|\leq 2 \|A_0\|\beta |u_R-u_L|$, which finally implies \eqref{eq:inversemap}.
\end{proof}


\subsection{Riemann problem}

We search for a solution of \eqref{2.12b} under the form \eqref{2.12}
satisfying the boundary conditions \eqref{2.10} and where $v$ is known by \eqref{2.11}.
 
\begin{theorem}[Uniform estimates and existence result] 
\label{thm_TV} 
There exists a solution $u^\eps\in\bar\Omega$  
of the problem \eqref{2.10}-\eqref{2.12b} satisfying, for some constant $K$ independent of $\eps$, 
\be
\begin{aligned}
&TV(u^\eps) \leq K |u_R-u_L|,\\
&\eps |u^\eps_\xi| \leq K.
\end{aligned}
\ee
After extracting a subsequence if necessary, this result provides us with a solution with bounded total variation. 
\end{theorem}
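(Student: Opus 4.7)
The strategy is a Schauder fixed-point argument on the closed convex set $\bar\Omega=\{u\in C^0([-M,M]):\|u-u_L\|_\infty\leq\varsigma\}$. Fix $\eps>0$. Given $u\in\bar\Omega$, Proposition~\ref{prop_strenght} provides a strength vector $\tau=\tau(u)\in B_\delta$ with $|\tau(u)|\leq\gamma|u_R-u_L|$ adapted to the Riemann data, Proposition~\ref{prop_correction} then yields the correction $\theta(\tau;\cdot)\in\mathcal{F}$ satisfying \eqref{eq311}--\eqref{eq312}, and $v^\eps$ is known explicitly via \eqref{2.11}. Setting $a_k := \tau_k\varphi^\star_k + \theta_k$, I reconstruct a new profile by integrating the characteristic decomposition,
$$
 \mathcal{T}(u)(\xi) := u_L + \int_{-M}^\xi A_0(u(x),v^\eps(x))^{-1}\sum_{k=1}^N a_k(x)\,\hatr_k(u(x),v^\eps(x),x)\,dx,
$$
the defining relation $S(\tau)=u_R$ of Proposition~\ref{prop_strenght} forcing $\mathcal{T}(u)(M)=u_R$. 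Any fixed point $u^\eps=\mathcal{T}(u^\eps)$ is therefore a smooth solution of \eqref{2.12b}--\eqref{2.10}.

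I would then verify that $\mathcal{T}$ is a continuous, compact self-map of $\bar\Omega$. Using \eqref{eq311} with $|\tau|\leq\delta$ and the $L^1$ identity $\|\varphi^\star_h\|_{L^1([-M,M])}=1$, the integrand in the definition of $\mathcal{T}(u)$ is dominated in $L^1$ by $K|\tau|$, yielding simultaneously the \emph{$\eps$-independent} estimates
$$
 \|\mathcal{T}(u)-u_L\|_\infty + TV\bigl(\mathcal{T}(u)\bigr)\leq K\,|\tau|\leq K\gamma|u_R-u_L|.
$$
Choosing $r$ small enough that $K\gamma r\leq\varsigma$ ensures stability $\mathcal{T}(\bar\Omega)\subset\bar\Omega$. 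For the fixed value of $\eps$, Lemma~\ref{l:wavelocal} gives the $\eps$-dependent bound $\|\varphi^\star_h\|_\infty\leq O(1/\eps)$, hence a Lipschitz bound on $\mathcal{T}(u)$ that is uniform in $u$; Arzel\`a--Ascoli then yields precompactness of $\mathcal{T}(\bar\Omega)$ in $C^0$. Continuity of $\mathcal{T}$ in the uniform topology is inherited from the smoothness of $A_0^{-1}$ and $\hatr_k$, the Lipschitz estimate \eqref{eq312}, and the Lipschitz dependence of $\tau$ on $u$ given by Proposition~\ref{prop_strenght}. Schauder's theorem therefore furnishes $u^\eps\in\bar\Omega$ with $\mathcal{T}(u^\eps)=u^\eps$.

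Both uniform estimates in the statement then come directly from the preceding step. The total variation bound is exactly the $\eps$-free inequality above. The pointwise bound $\eps|u^\eps_\xi|\leq K$ follows from the relation $A_0(u^\eps,v^\eps)u^\eps_\xi=\sum_k a_k\hatr_k$ combined with the sup-norm estimate $\eps\,\varphi^\star_k(\xi)\leq O(1)$ of Lemma~\ref{l:wavelocal}. Helly's selection theorem applied to the equibounded BV family $\{u^\eps\}_{\eps>0}$ then extracts a subsequence converging pointwise to a limit of bounded variation.

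The main obstacle, essentially already handled by the earlier sections, is keeping all constants independent of $\eps$: because the $\varphi^\star_k$ have sup-norms of order $1/\eps$, any na\"ive pointwise estimate on $a_k$ blows up, and only the $L^1$ normalization $\|\varphi^\star_k\|_{L^1}=1$ survives uniformly in $\eps$. This is precisely what the weighted norm on $E$ was designed to preserve, and what the interaction bounds of Lemmas~\ref{lemma:linint} and \ref{l:quadint} on $J_{j\to i}$, $F_{j,k\to i}$, $J_{j\to i}^\psi$ propagate through the contraction arguments of Propositions~\ref{prop_correction} and \ref{prop_strenght}, explaining also the need for the smallness conditions on $\eta$, $\nu$, $\delta$ (hence on $|u_R-u_L|$).
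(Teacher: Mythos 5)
Your proposal follows essentially the same route as the paper: construct the operator $\mathcal{T}$ on $\bar\Omega$ by assembling the fundamental wave measures $\varphi^\star_k$, the correction $\theta(\tau;\cdot)$ from Proposition~\ref{prop_correction}, and the strength $\tau$ from Proposition~\ref{prop_strenght}, then conclude by a fixed-point argument and read off the total variation bound from the $L^1$ normalization of the $\varphi^\star_k$ and the bound $\eps|u^\eps_\xi|\leq K$ from Lemma~\ref{l:wavelocal}. In fact you supply more detail (the Schauder/Arzel\`a--Ascoli compactness step and the explicit derivation of the two estimates) than the paper's own proof, which only sketches these points.
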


\begin{proof}
$\varsigma>0$ is choosen so that conditions of eigenvalue separation are
fulfilled on $\bar{\Omega}$. Fix $u_L$ and $u\in {\bar\Omega}$. For $\eps$ fixed, we construct $z$ as
$$
z(\xi)=u_L+A_0(u,v)^{-1}\int_{-M}^\xi\sum_{j=1}^n
\left(\tau_j\varphi_j^\star(\zeta)+\theta_j(\tau;\zeta)\right)\hatr_j(u(\zeta),v(\zeta),\zeta)\,d\zeta
$$
by following steps:
\begin{enumerate}
\item Each $\varphi_j^\star$ is constructed as the fundamental wave measure from \eqref{2.15}, recalling 
$$
\varphi_j^{\star}{}' - \frac{\mu_j(u,v, \cdot)}{\eps} \, \varphi_j^\star=0.
$$

\item For each $\tau$ small enough we can get, through Proposition~\ref{prop_correction}, a correction $\theta(\tau,\cdot)$ so that $a_j=\tau_j\varphi_j^\star+\theta_j$ is solution of \eqref{2.13a} 
$$
   a_j' - \frac{\mu_j(u,v, \cdot)}{\eps} \, a_j =  \eta  L_j(u,v,\cdot)\, + 
   Q_j(u,v, \cdot) \, + S_j(u,v, \cdot).
$$
\item The vector of strength $\tau$ is then chosen, through Proposition~\ref{prop_strenght}, as a solution of $S(\tau)=u_R\in B_r(u_L)$. This way the solution $\ut$ of
$$
A_0(u,v)\,\ut_\xi(\xi) = \sum_j \, a_j(\xi) \, \hatr_j(u, v, \xi)
$$
satisfying $\ut(-M)=u_L$, satisfies moreover $\ut(M)=u_R$.
\end{enumerate}
These steps allow us to construct an operator $\mathcal{T}:{\bar\Omega}\rightarrow E, u\mapsto z$, and $\mathcal{T}(u)=z\in{\bar\Omega}$. We only need to get a fixed point result on $\mathcal{T}$ to get the solution $u$ of the whole problem, and then sufficiently strong estimates to ensures existence of the limit as $\eps$ tends to 0.
\end{proof}

\begin{lemma}
\label{l:thickness}
The function $v^\eps$ converges toward the sign function 
(denoted by $\sgn$) and, more precisely, for all $c>0$
\be
\left\|v^\eps-\sgn\right\|_{L^\infty(\RR\setminus[-c,c])}=o(\eps).
\ee
\end{lemma}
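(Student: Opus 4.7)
The plan is a direct computation based on the explicit Gaussian form of $v^\eps$ given by \eqref{2.11}. Integrating $\psi^\eps = v^\eps_\xi$ from $-M$ subject to the boundary condition $v^\eps(-M) = -1$ yields the closed form
\be
\label{eq:vexp-plan}
v^\eps(\xi) = -1 + 2\,\frac{\displaystyle\int_{-M}^\xi e^{-\zeta^2/(2\eps^p)}\,d\zeta}{\displaystyle\int_{-M}^M e^{-\zeta^2/(2\eps^p)}\,d\zeta}, \qquad \xi\in[-M,M],
\ee
and we extend $v^\eps$ by $\mp 1$ outside $[-M,M]$ so the conclusion is trivial there. Thus, assuming $c < M$, it is enough to bound $|v^\eps(\xi)-\sgn(\xi)|$ on $[c,M]$ and on $[-M,-c]$.

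First I would handle $\xi\in[c,M]$. From \eqref{eq:vexp-plan} one has
$$
1 - v^\eps(\xi) \;=\; 2\,\frac{\displaystyle\int_\xi^M e^{-\zeta^2/(2\eps^p)}\,d\zeta}{\displaystyle\int_{-M}^M e^{-\zeta^2/(2\eps^p)}\,d\zeta}.
$$
For the numerator, I apply the standard Gaussian tail bound $\int_\xi^\infty e^{-\zeta^2/(2\eps^p)}\,d\zeta \leq (\eps^p/\xi)\,e^{-\xi^2/(2\eps^p)}$, valid for $\xi>0$, which on $[c,M]$ gives
$$
\int_\xi^M e^{-\zeta^2/(2\eps^p)}\,d\zeta \;\leq\; \frac{\eps^p}{c}\, e^{-c^2/(2\eps^p)}.
$$
For the denominator, the substitution $\zeta = \eps^{p/2}\,s$ produces
$$
\int_{-M}^M e^{-\zeta^2/(2\eps^p)}\,d\zeta = \eps^{p/2}\!\!\int_{-M\eps^{-p/2}}^{M\eps^{-p/2}}\!\!e^{-s^2/2}\,ds \;\longrightarrow\; \sqrt{2\pi}\,\eps^{p/2}
$$
as $\eps\to 0$, so that for all sufficiently small $\eps$ the denominator is bounded below by $\tfrac{1}{2}\sqrt{2\pi}\,\eps^{p/2}$. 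Combining these two bounds yields, uniformly for $\xi\in[c,M]$,
$$
0 \;\leq\; 1 - v^\eps(\xi) \;\leq\; \frac{C}{c}\,\eps^{p/2}\, e^{-c^2/(2\eps^p)}.
$$
The symmetric argument for $\xi\in[-M,-c]$ uses the representation $v^\eps(\xi)+1 = 2\int_{-M}^\xi e^{-\zeta^2/(2\eps^p)}d\zeta / \int_{-M}^M e^{-\zeta^2/(2\eps^p)}d\zeta$ together with the same tail estimate applied on the left.

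The upshot is that on $\RR\setminus[-c,c]$ the quantity $|v^\eps - \sgn|$ decays like $\eps^{p/2} e^{-c^2/(2\eps^p)}$, which is exponentially small in $\eps$ and therefore $o(\eps)$, as required. There is really no hard step here; the only point requiring care is the splitting between the region $[-M,M]$, where \eqref{eq:vexp-plan} is used, and its complement, where the extension of $v^\eps$ equals $\sgn$ exactly.
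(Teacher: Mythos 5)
Your proof is correct and follows essentially the same route as the paper: the explicit Gaussian representation of $v^\eps$, an exponentially small bound on the tail integral in the numerator, and the scaling $\zeta=\eps^{p/2}s$ to bound the denominator below by a multiple of $\eps^{p/2}$, yielding decay of order $e^{-c^2/(2\eps^p)}$ and hence $o(\eps)$. The only difference is cosmetic: you use the sharper Gaussian tail estimate $\int_\xi^\infty e^{-\zeta^2/(2\eps^p)}\,d\zeta\leq(\eps^p/\xi)e^{-\xi^2/(2\eps^p)}$ where the paper simply bounds the integrand by its value at $c$, which changes the prefactor from $\eps^{-p/2}$ to $\eps^{p/2}$ but not the conclusion.
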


\begin{proof}
Indeed the formula \eqref{2.11} implies $v^\eps$ takes the form
$$
 v^\eps(\xi)=-1+2
 {\int_{-M}^\xi e^{-{x^2\over2 \eps^p} dx} \over \int_{- M}^{
     M}e^{-{x^2\over 2\eps^p}} dx}.
$$
Fix $\xi>c>0$, so that 
$$
\aligned 
\left|v^\eps(x)-1\right| 
& \leq 2{\int_{c}^M e^{-{x^2\over2 \eps^p} dx} \over \int_{- M}^{
     M}e^{-{x^2\over 2\eps^p}} dx}
     \\
     & \leq 2 {M e^{-{c^2\over2 \eps^p}}\over \eps^{p/2} \int_{- M/\eps^{p/2}}^{
     M/\eps^{p/2}}e^{-{y^2\over 2}} dy}\leq {C \over \eps^{p/2}} e^{-{c^2\over2 \eps^p}}.
\endaligned
$$
For $\xi<-c<0$, by the same procedure, we get
$$
\left|v^\eps(x)+1\right| \leq {C \over \eps^{p/2}} e^{-{c^2\over2 \eps^p}},
$$
the lemma is therefore proved.
\end{proof}

\begin{theorem}[Convergence to an entropy solution]
\label{thm-convergence-dafermos}
The sequence $u^\eps$ converges pointwise toward $u\in BV$, satisfying 
\be
\begin{aligned}
-\xi\ \dfrac{d}{d\xi}\lpcL(u) + \dfrac{d}{d\xi} f_-(\lpcL(u))=0, \qquad \text{ in } \xi <0, 
\\
-\xi\ \dfrac{d}{d\xi}\lpcR(u) + \dfrac{d}{d\xi} f_+(\lpcR(u))=0, \qquad \text{ in } \xi >0.
\end{aligned}
\ee
Let $\eta_{\pm}=\eta_{\pm}(u)\in\RN$ be two entropy functions compatible with the viscosity matrix in the sense that
$$
\nabla^2\eta_\pm(u) B_{0\pm}(u)\geq 0,\qquad u\in\calU.
$$
Then following entropy inequalities are satisfied, 
\be
\begin{aligned}
-\xi\ \dfrac{d}{d\xi}\eta_-(\lpcL(u)) + \dfrac{d}{d\xi} q_-(\lpcL(u))\leq 0,\qquad \text{ in } \xi <0, 
\\
-\xi\ \dfrac{d}{d\xi}\eta_+(\lpcR(u)) + \dfrac{d}{d\xi} q_+(\lpcR(u))\leq 0, \qquad \text{ in } \xi >0. 
\end{aligned}
\ee
\end{theorem}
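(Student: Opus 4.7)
The plan is to mimic the scalar argument of Section~\ref{section3} (Lemmas~\ref{L:Riemann} and~\ref{L:Entropy}), now relying on the uniform estimates delivered by Theorem~\ref{thm_TV} and the thickness bound of Lemma~\ref{l:thickness}. Since $TV(u^\eps)$ is uniformly bounded, Helly's compactness theorem produces a subsequence $u^\eps\to u$ pointwise a.e.\ with $u\in BV(\RR)$, while $v^\eps\to\sgn$ uniformly on every compact subset of $\{\xi\neq 0\}$.

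To derive the conservation law on $\xi>0$, I would fix $\theta>0$ and a test-function $\phi\in C_0^\infty((\theta,M))$ and multiply the first equation in \eqref{1.3-NEW} by $\phi$. Splitting $A_j(u^\eps,v^\eps)=A_j(u^\eps,1)+\bigl(A_j(u^\eps,v^\eps)-A_j(u^\eps,1)\bigr)$ for $j=0,1$ and invoking the consistency relations \eqref{Consistancy}, the principal part becomes the weak derivative of $\gamma_+(u^\eps)$ and $f_+(\gamma_+(u^\eps))$, with a residual $\Omega^\eps$ satisfying
$$
|\Omega^\eps|\leq(M\omega_0+\omega_1)\,\|1-v^\eps\|_{L^\infty([\theta,M])}\,TV(u^\eps)\,\|\phi\|_\infty,
$$
which vanishes by Lemma~\ref{l:thickness}, plus a viscous contribution bounded after integration by parts by $\eps\,\|B_0\|_\infty\,TV(u^\eps)\,\|\phi_\xi\|_\infty\to 0$. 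Passing to the limit yields the conservation law on $\xi>0$; the symmetric argument on $\xi<0$ uses $\gamma_-,f_-$ and $v^\eps\to-1$.

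For the entropy inequality, I would repeat the procedure with the multiplier $\nabla\eta_+(\gamma_+(u^\eps))^T\phi$, $\phi\geq 0$. The consistency \eqref{Consistancy} together with $\nabla q_+=\nabla\eta_+\,\nabla f_+$ turns the principal part into
$$
\int_0^M\Bigl(-\xi\,\tfrac{d}{d\xi}\eta_+(\gamma_+(u^\eps))+\tfrac{d}{d\xi}q_+(\gamma_+(u^\eps))\Bigr)\phi\,d\xi,
$$
up to an error controlled exactly as $\Omega^\eps$. Integrating by parts in the viscous term and using $\tfrac{d}{d\xi}\nabla\eta_+(\gamma_+(u^\eps))=\nabla^2\eta_+(\gamma_+(u^\eps))\,D_u\gamma_+\,u^\eps_\xi$ isolates, modulo a boundary contribution $O(\eps\,TV(u^\eps)\,\|\phi_\xi\|_\infty)$, the quadratic form
$$
-\eps\int_0^M(u^\eps_\xi)^T\,D_u\gamma_+^T\,\nabla^2\eta_+(\gamma_+(u^\eps))\,B_0(u^\eps,v^\eps)\,u^\eps_\xi\,\phi\,d\xi.
$$
Changing variables to $w^\eps=\gamma_+(u^\eps)$ so that $w^\eps_\xi=D_u\gamma_+\,u^\eps_\xi$, this reduces to $-\eps\int(w^\eps_\xi)^T\nabla^2\eta_+(w^\eps)\,B_{0+}(w^\eps)\,w^\eps_\xi\,\phi$, which is non-positive by the compatibility assumption, plus an error bounded by $\|1-v^\eps\|_{L^\infty([\theta,M])}\int_0^M\eps|u^\eps_\xi|^2\phi$. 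This error vanishes since $\eps|u^\eps_\xi|\leq K$ and $\int|u^\eps_\xi|\leq TV(u^\eps)\leq K|u_R-u_L|$ by Theorem~\ref{thm_TV}, combined with Lemma~\ref{l:thickness}; the case $\xi<0$ is identical.

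The main obstacle lies in identifying the viscous quadratic term with the dissipation associated with the compatibility condition: one has to absorb $D_u\gamma_+^T$ on the left, replace $B_0(u^\eps,v^\eps)$ by $B_0(u^\eps,1)$ to recover $B_{0+}$ on the right, and control the two errors generated in the process. This is precisely the place where both uniform bounds of Theorem~\ref{thm_TV} (pointwise $\eps|u^\eps_\xi|\leq K$ and integral $TV(u^\eps)\leq K|u_R-u_L|$) and the exponential thickness estimate of Lemma~\ref{l:thickness} enter in combination.
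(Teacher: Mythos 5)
Your proposal is correct and follows essentially the same route as the paper's proof: Helly compactness from the uniform $TV$ bound of Theorem~\ref{thm_TV}, splitting $A_j(u^\eps,v^\eps)$ into $A_j(u^\eps,\pm1)$ plus a remainder killed by Lemma~\ref{l:thickness}, integration by parts in the viscous term, and the compatibility hypothesis to sign the quadratic dissipation, with the $B_0(u^\eps,v^\eps)\to B_0(u^\eps,\pm1)$ replacement error absorbed via $\eps|u^\eps_\xi|\leq K$ and $TV(u^\eps)\leq K|u_R-u_L|$. You are in fact slightly more explicit than the paper on the identification of the dissipation term (carrying the $D_u\gamma_\pm$ factors and the meaning of $B_{0\pm}$), where the paper's own write-up is notationally loose but relies on the same mechanism.
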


\begin{proof}
Let $\calO$ be a neighborhood of 0, the solution $u^\eps$ of \eqref{1.3-NEW} is obtained as a smooth function so that, under the consistancy hypothesis \eqref{Consistancy}, we have outside $\calO$
$$
\begin{aligned}
A_0(u^\eps,v^0){d u^\eps\over d\xi} &= \dfrac{d}{d\xi}\lpcLR(u^\eps),\\
A_1(u^\eps,v^0){d u^\eps\over d\xi} &= \dfrac{d}{d\xi} f_\pm(\lpcLR(u^\eps)).
\end{aligned}
$$
Let $\phi\in C^\infty_c(\RR_-\setminus\calO)$ be a test-function with a compact support included in $\RR_-\setminus\calO$, then \eqref{1.3-NEW} implies
$$
\begin{aligned}
&\int_\RR \bigl(-\xi\ {d \over d\xi} \lpcL(u^\eps) + \dfrac{d}{d\xi} f_-(\lpcL(u^\eps))\bigr)\phi\;d\xi\\
& = \int_\RR -\xi\ \left(A_0(u^\eps,v^0)-A_0(u^\eps,v^\eps)\right) u_\xi^\eps\phi\;d\xi\\
& \quad+ \int_\RR \left(A_1(u^\eps,v^0)-A_1(u^\eps,v^\eps)\right) u_\xi^\eps\phi\;d\xi
 + \int_\RR \eps\left(B_0(u^\eps,v^\eps) u_\xi^\eps\right)_\xi \phi \;d\xi.
\end{aligned}
$$
Moreover, thanks to Lemma~\ref{l:thickness} and using Lipschitz continuity properties, we have
$$
\begin{aligned}
\left| \int_\RR -\xi\left(A_0(u^\eps,v^0)-A_0(u^\eps,v^\eps)\right) u_\xi^\eps\phi\;d\xi\right|
&\leq o(\eps) \textrm{Lip}(A_0) \|\xi \phi\|_\infty TV(u^\eps),
\\
\left| \int_\RR \left(A_1(u^\eps,v^0)-A_1(u^\eps,v^\eps)\right)u_\xi^\eps\phi\;d\xi\right|
&\leq o(\eps) \textrm{Lip}(A_1) \|\phi\|_\infty TV(u^\eps),
\end{aligned}
$$
and 
$$
\aligned
\eps \left| \int_\RR \left(B_0(u^\eps,v^\eps) u_\xi^\eps\right)_\xi \phi\;d\xi\right|
& = \eps \left| \int_\RR B_0(u^\eps,v^\eps) u_\xi^\eps \phi_\xi\;d\xi\right| 
\\
& \leq  \eps \|B_0\|_\infty \|\phi_\xi\|_\infty TV(u^\eps).
\endaligned
$$
Thus, as $\eps$ tends to 0, we get the weak formulation for the limit $u$
$$
\int_\RR \bigl(-\xi\ \dfrac{d}{d\xi}\lpcL(u) + \dfrac{d}{d\xi} f_-(\lpcL(u))\bigr)\varphi\;d\xi  = 0.
$$
By a similar method we get for $\varphi\in C^\infty_c(\RR_+\setminus\calO)$
$$
\int_\RR \bigl(-\xi\ \dfrac{d}{d\xi}\lpcR(u) + \dfrac{d}{d\xi} f_+(\lpcR(u))\bigr)\varphi\;d\xi  = 0.
$$
Entropy inequalities are obtained by first using the consistency hypothesis \eqref{Consistancy}, that give outside $\calO$
$$
\begin{aligned}
\nabla\eta_\pm(u^\eps)\cdot A_0(u^\eps,v^0)u_\xi^\eps &= \dfrac{d}{d\xi} \eta_{\pm}(\lpcLR(u^\eps)),\\
\nabla\eta_\pm(u^\eps)\cdot A_1(u^\eps,v^0)u_\xi^\eps &= \dfrac{d}{d\xi} q_\pm(\lpcLR(u^\eps)).
\end{aligned}
$$
Let $\phi\in C^\infty_c(\RR_-\setminus\calO)$ be a non-negative test function with a compact support included in $\RR_-\setminus\calO$, then \eqref{1.3-NEW} implies
$$
\begin{aligned}
&\int_\RR \bigl(-\xi\ \dfrac{d}{d\xi}\eta_-(\lpcL(u^\eps)) + \dfrac{d}{d\xi} q_-(\lpcL(u^\eps))\bigr)\phi\;d\xi\\
\qquad\qquad& = \int_\RR -\xi\ \nabla\eta_-(u^\eps)\cdot \left(A_0(u^\eps,v^0)-A_0(u^\eps,v^\eps)\right) u_\xi^\eps\phi\;d\xi
\\
& \quad + \int_\RR \nabla\eta_-(u^\eps)\cdot \left(A_1(u^\eps,v^0)-A_1(u^\eps,v^\eps)\right) u_\xi^\eps\phi\;d\xi
\\
& \quad + \int_\RR \eps\nabla\eta_-(u^\eps)\cdot \left(B_0(u^\eps,v^\eps) u_\xi^\eps\right)_\xi \phi \;d\xi.
\end{aligned}
$$
With similar arguments as previously, the first and the second terms of right hand side tends to 0 as $\eps$ tends to 0. Moreover, after reporting the $\xi$-derivative on $\phi\nabla\eta_-(u^\eps)$, the last term 
$$
\begin{aligned}
& - \int_\RR \eps\phi_\xi\nabla\eta_-(u^\eps)\cdot B_0(u^\eps,v^\eps) u_\xi^\eps \;d\xi
  - \int_\RR \eps\phi\nabla^2\eta_-(u^\eps)\cdot B_0(u^\eps,v^0) |u^\eps_\xi|^2 \;d\xi\\
&+ \int_\RR \eps\phi\nabla^2\eta_-(u^\eps)\cdot (B_0(u^\eps,v^0)-B_0(u^\eps,v^\eps)) |u^\eps_\xi|^2  \;d\xi
\end{aligned}
$$
satisfies the estimates
$$
\begin{aligned}
&\Big| 
\int_\RR \eps \phi_\xi\nabla\eta_-(u^\eps)\cdot B_0(u^\eps,v^\eps) u_\xi^\eps \;d\xi\Big|
\leq K\eps TV(u^\eps) 
\end{aligned}
$$
and
$$
\begin{aligned}
&\left|\int_\RR \eps \phi\nabla^2\eta_-(u^\eps)\cdot (B_0(u^\eps,v^0)-B_0(u^\eps,v^\eps)) |u^\eps_\xi|^2 \;d\xi\right|\\
&\leq K \textrm{Lip}(B_0)\|\nabla^2\eta_-\|_\infty\|\phi\|_\infty TV(u^\eps)\|v^\eps-v^0\|_{L^\infty(\RR\setminus\calO)}.
\end{aligned}
$$
However the quantity $\int_\RR \eps\phi\nabla^2\eta_-(u^\eps)\cdot B_0(u^\eps,v^0) |u^\eps_\xi|^2  \;d\xi$ is not 
guaranteed
 to vanish as $\eps$ tends to 0, but it converges toward a positive value under the hypothesis $\nabla^2\eta_\pm B_0{}_\pm(u)\geq 0$. The following weak formulation of the entropy inequality on $\mathbb{R}_-$ follows: 
$$
\int_\RR \bigl(-\xi\ \dfrac{d}{d\xi}\eta_-(\lpcL(u)) + \dfrac{d}{d\xi} q_-(\lpcL(u))\bigr)\phi\;d\xi\leq 0. 
$$
Similar arguments lead to the entropy inequality on $\RR_+$.
\end{proof}


\section*{Acknowledgments}
The authors were partially supported by the Centre National de la Recherche
Scientifique (CNRS),
the Commissariat \`a l'\'Energie Atomique (Saclay), 
and the Agence Nationale de la Recherche (ANR) through Grant 06-2-134423.


\newcommand{\auth}{\textsc}


\begin{thebibliography}{10}

\bibitem{AdimurthiMishraGowda05}
\auth{Adimurthi S.M. and Veerappa Gowda G.D.,}
Optimal entropy solutions for conservation laws with discontinuous
  flux-functions,
{\em J. Hyperbolic Differ. Equ.} 2 (2005), 783--837.

\bibitem{GDT06}
\auth{Ambroso A., Chalons C., Coquel F., Godlewski E., Lagouti{\`e}re F., 
  Raviart P.-A., and Seguin N.,}
Extension of interface coupling to general {L}agrangian systems,
In {\em Numerical mathematics and advanced applications},
Springer, Berlin, 2006, pp.~852--860.

\bibitem{AudussePerthame05}
\auth{Audusse E. and Perthame B.,}
Uniqueness for scalar conservation laws with discontinuous flux via
  adapted entropies,
{\em Proc. Roy. Soc. Edinburgh Sect. A} 135 (2005), 253--265.  

\bibitem{BoutinAl06} \auth{Boutin B., Coquel F., and Godlewski E.,} 
Dafermos regularization for interface coupling of conservation laws, 
in Proceedings of the 11th international conference on hyperbolic problems,  
Benzoni-Gavage, Sylvie (ed.) et al., Hyperbolic problems. Theory, numerics and applications, 
Ecole Norm. Sup\'er., Lyon, France, July 17--21, 2006, Springer Verlag, Berlin, 2008, pp.~567-574. 

\bibitem{BCL2} \auth{Boutin B., Coquel F., and LeFloch P.G.,}
Coupling techniques for nonlinear hyperbolic equations. II, 
 in preparation. 
 
\bibitem{BCL3} \auth{Boutin B., Coquel F., and LeFloch P.G.,}
Coupling techniques for nonlinear hyperbolic equations. III, 
 in preparation. 
 
\bibitem{BCL4} \auth{Boutin B., Coquel F., and LeFloch P.G.,}
Coupling techniques for nonlinear hyperbolic equations. IV, 
 in preparation. 

\bibitem{BurgerKarlsenRisebroTowers04}
\auth{B{\"u}rger R., Karlsen K.H., Risebro N.H., and Towers J.D.,}
Well-posedness in {$BV\sb t$} and convergence of a difference scheme
  for continuous sedimentation in ideal clarifier-thickener units,
{\em Numer. Math.} 97 (2004), 25--65.

\bibitem{BurgerKarlsen08}
\auth{B{\"u}rger R. and Karlsen K.H,.}
Conservation laws with discontinuous flux: a short introduction,
{\em J. Engrg. Math.} 60 (2008), 241--247.

\bibitem{CrastaLeFloch} Crasta G. and LeFloch P.G., 
A class of nonconservative and non strictly hyperbolic systems, 
{\em Comm. Pure Appl. Anal.} 1 (2002), 513--530.

\bibitem{DafermosDiPerna76}
\auth{Dafermos C.M. and DiPerna R.J.,}
The {R}iemann problem for certain classes of hyperbolic systems of
  conservation laws,
{\em J. Differential Equations} 20 (1976), 90--114.

\bibitem{Dafermos73}
\auth{Dafermos C.M.,}
Solution of the {R}iemann problem for a class of hyperbolic systems
  of conservation laws by the viscosity method,
{\em Arch. Rational Mech. Anal.} 52 (1973), 1--9.

\bibitem{Dafermos73b}
\auth{Dafermos C.M.,}
Structure of solutions of the {R}iemann problem for hyperbolic
  systems of conservation laws,
{\em Arch. Rational Mech. Anal.} 53 (1973), 203--217.

\bibitem{Diehl96}
\auth{Diehl S.,}
Scalar conservation laws with discontinuous flux function. {I}. {T}he
  viscous profile condition,
{\em Comm. Math. Phys.} 176 (1996), 23--44.

\bibitem{DuboisLeFloch88}
\auth{Dubois F. and LeFloch P.G.,}
Boundary conditions for nonlinear hyperbolic systems of conservation
  laws,
{\em J. Differential Equations} 71 (1988), 93--122.

\bibitem{FanSlemrod93}
\auth{Fan H.T. and Slemrod M.,}
The {R}iemann problem for systems of conservation laws of mixed type, 
in {\em ``Shock induced transitions and phase structures in general
  media''}, Vol.~52, {\em IMA Vol. Math. Appl.}, Springer, New
  York, 1993, pp.~61--91.

\bibitem{GoatinLeFloch} \auth{Goatin P. and LeFloch P.G.,}
The {R}iemann problem for a class of resonant hyperbolic systems of balance laws,
{\em Ann. Inst. H. Poincar\'e Anal. Non Lin\'eaire} 21 (2004), 881--902.

\bibitem{GodlewskiLeThanhRaviart05}
\auth{Godlewski E., Le~Thanh K.-C., and Raviart P.-A.,}
The numerical interface coupling of nonlinear hyperbolic systems of
  conservation laws. {II}. {T}he case of systems, 
{\em M2AN Math. Model. Numer. Anal.} 39 (2005), 649--692.

\bibitem{GodlewskiRaviart04} \auth{Godlewski E. and Raviart P.-A.,}
The numerical interface coupling of nonlinear hyperbolic systems of
  conservation laws. {I}. {T}he scalar case,
{\em Numer. Math.} 97 (2004), 81--130.

\bibitem{IsaacsonTemple1} \auth{Isaacson E. and Temple B.,},
Nonlinear resonance in systems of conservation laws,
{\em SIAM J. Appl. Math.} 52 (1992), 1260--1278.

\bibitem{IsaacsonTemple2} \auth{Isaacson E. and Temple B.,},
Convergence of the {$2\times 2$} {G}odunov method for a general resonant nonlinear balance law,
{\em SIAM J. Appl. Math.} 55 (1995), 625--640.

\bibitem{JosephLeFloch99} \auth{Joseph K.T. and LeFloch P.G.,}
Boundary layers in weak solutions of hyperbolic conservation laws.
{\em Arch. Ration. Mech. Anal.} 147 (1999), 47--88.

\bibitem{JosephLeFloch02b} \auth{Joseph K.T. and LeFloch P.G.,}
Boundary layers in weak solutions of hyperbolic conservation laws.
  {III}. {V}anishing relaxation limits,
{\em Port. Math.} 59 (2002), 453--494.

\bibitem{JosephLeFloch02}
\auth{Joseph K.T. and LeFloch P.G.,}
Boundary layers in weak solutions of hyperbolic conservation laws.
  {II}. {S}elf-similar vanishing diffusion limits,
{\em Commun. Pure Appl. Anal.} 1 (2002), 51--76.

\bibitem{JosephLeFloch5} \auth{Joseph K.T. and LeFloch P.G.,}
Singular limits for Riemann problem : General diffusion, relaxation, and boundary conditions, 
in {\em New analytical approach to multidimensional balance
laws}, O. Rozanova ed., Nova Press, 2006, pp.~143--172.  

\bibitem{JosephLeFloch6}  \auth{Joseph K.T. and LeFloch P.G.,}
K.T. Joseph and P.G. LeFloch, 
Singular limits in phase dynamics with physical viscosity and capillarity, 
{\em Proc. Royal Soc. Edinburgh} 137A (2007), 1287--1312.

\bibitem{JosephLeFloch07}
\auth{Joseph K.T. and LeFloch P.G.,}
Singular limits for the {R}iemann problem: general diffusion,
  relaxation, and boundary conditions,
{\em C. R. Math. Acad. Sci. Paris} 344 (2007), 59--64.

\bibitem{LeFloch1} \auth{LeFloch P.G.,} 
Entropy weak solutions to nonlinear hyperbolic systems in nonconservative form, 
{\em Comm. Part. Diff. Equa.} 13 (1988), 669--727.

\bibitem{LeFloch2} \auth{LeFloch P.G.,}
Shock waves for nonlinear hyperbolic systems in nonconservative form, 
Institute for Math. and its Appl., Minneapolis, Preprint \# 593, 1989 (unpublished).

\bibitem{LeFlochTzavaras96}
\auth{LeFloch P.~G. and Tzavaras A.~E.}
Existence theory for the {R}iemann problem for non-conservative
  hyperbolic systems,
{\em C. R. Acad. Sci. Paris S\'er. I Math.} 323 (1996), 347--352. 

\bibitem{SeguinVovelle03} \auth{Seguin N. and Vovelle J.,}
Analysis and approximation of a scalar conservation law with a flux
  function with discontinuous coefficients,
{\em Math. Models Methods Appl. Sci.} 13 (2003), 221--257.

\bibitem{Slemrod2} \auth{Slemrod M.,} 
A limiting viscosity approach to the Riemann problem for materials exhibiting change of phase,
{\em Arch. Rational Mech. Anal.} 105 (1989), 327--365.

\bibitem{SlemrodTzavaras89}
\auth{Slemrod M. and Tzavaras A.E.,}
A limiting viscosity approach for the {R}iemann problem in isentropic
  gas dynamics,
{\em Indiana Univ. Math. J.} 38 (1989), 1047--1074.

\bibitem{Tzavaras96}
\auth{Tzavaras A.E.,}
Wave interactions and variation estimates for self-similar
  zero-viscosity limits in systems of conservation laws,
{\em Arch. Rational Mech. Anal.} 135 (1996), 1--60.


\end{thebibliography}
\end{document}